\numberwithin{equation}{section}
\theoremstyle{plain}
\newtheorem{theorem}{Theorem}[section]
\newtheorem{lemma}[theorem]{Lemma}
\newtheorem{prop}[theorem]{Proposition}
\newtheorem{corollary}[theorem]{Corollary}
\theoremstyle{definition}
\newtheorem{definition}[theorem]{Definition}
\newtheorem{remark}[theorem]{Remark}
\newtheorem{example}[theorem]{Example}
\newcommand{\s}{\sigma}
\newcommand{\N}{\mathbb N}
\newcommand{\Z}{\mathbb Z}
\renewcommand{\P}{\mathbb P}
\newcommand{\X}{\mathbb X}
\renewcommand{\AA}{\mathcal A}
\newcommand{\BB}{\mathcal B}
\newcommand{\CC}{\mathcal C}
\newcommand{\DD}{\mathcal D}
\newcommand{\TT}{\mathcal T}
\renewcommand{\O}{\mathcal O}
\renewcommand{\k}{\mathsf k}
\newcommand{\xra}{\xrightarrow}
\renewcommand{\le}{\leqslant}
\renewcommand{\ge}{\geqslant}
\newcommand{\bul}{\bullet}
\newcommand{\mmod}{\mathrm{{-}mod}}
\newcommand{\modd}{\mathrm{mod{-}}}
\newcommand{\Modd}{\mathrm{Mod{-}}}
\DeclareMathOperator{\Hom}{\textup{Hom}}
\DeclareMathOperator{\Ext}{\textup{Ext}}
\DeclareMathOperator{\End}{\mathrm{End}}
\DeclareMathOperator{\Spec}{\mathrm{Spec}}
\DeclareMathOperator{\coker}{\mathrm{coker}} 
\DeclareMathOperator{\im}{\mathrm{im}}
\DeclareMathOperator{\coh}{\mathrm{coh}}
\DeclareMathOperator{\Perf}{\mathrm{Perf}}
\DeclareMathOperator{\id}{\mathrm{id}}
\DeclareMathOperator{\Sdim}{Sdim}
\DeclareMathOperator{\LSdim}{\underline{\mathrm{Sdim}}}
\DeclareMathOperator{\USdim}{\overline{\mathrm{Sdim}}}
\DeclareMathOperator{\Rdim}{Rdim}
\DeclareMathOperator{\gldim}{\mathrm{gldim}}
\DeclareMathOperator{\Ddim}{Ddim}
\DeclareMathOperator{\op}{\mathrm{op}}
\DeclareMathOperator{\Tot}{\mathrm{Tot}}
\begin{document}

\title[Calculating dimension of triangulated categories]{Calculating dimension of triangulated categories: path algebras, their tensor powers and orbifold projective lines}

\author{Alexey ELAGIN}
%\thanks{The research was carried out at the IITP RAS at the expense of the Russian
%Foundation for Sciences (project ¹ 14-50-00150).}
\address{Institute for Information Transmission Problems (Kharkevich Institute), Moscow, RUSSIA;
National Research University Higher School of Economics, Russian Federation}
\email{alexelagin@rambler.ru}

\begin{abstract}
This is a companion paper of \cite{EL}, where different notions of dimension for triangulated categories are discussed. Here we compute dimensions for some examples of triangulated categories and thus illustrate and motivate material from \cite{EL}. Our examples include path algebras of finite ordered quivers, orbifold projective lines, some tensor powers of path algebras in Dynkin quivers of type $A$ and categories, generated by an exceptional pair.
\end{abstract}

\maketitle

\tableofcontents

\section{Introduction}

In \cite{EL} V.\,Lunts and the author define and discuss several notions of dimension for triangulated categories: Rouquier  dimension, diagonal dimension, lower and upper Serre dimension. They prove some general properties of these dimensions, make some conjectures and pose open questions.
In this paper we study some explicit examples of triangulated categories and calculate their dimension. Some of our examples illustrate general phenomena mentioned in \emph{loc. cit.}, motivate open questions and give evidence for conjectural answers. 
On the other hand, some other examples are of their own interest. 

Let us recall the definitions given in \emph{loc.\,cit.} (see Section~\ref{section_dimensions} for details). We restrict ourselves to triangulated categories of the form $\TT=D^b(\modd A)$ where $A$ is a finite-dimensional smooth algebra over some field $\k$. By abuse of language, for such categories we speak about the dimension of the \emph{algebra} instead of its derived category of modules. Rouquier dimension $\Rdim(\TT)$ is the minimal generation time  over all classical generators of~$\TT$. Diagonal dimension $\Ddim(A)$ is the minimal time needed to generate the diagonal $A^{\op}\otimes_\k A$-bimodule $A$ by some object of the form $F\boxtimes G$ where $F\in D^b(\modd A^{\op})$, $G\in D^b(\modd A)$. Let $S\colon D^b(\modd A)\to D^b(\modd A)$ be the Serre functor and $S^n$ be its $n$-th iteration. Then lower and upper Serre dimensions of $A$ are defined as
$$\LSdim(A)=\lim_{n\to+\infty}\frac{-\max\{i\mid H^i(S^n(A))\ne 0\}}n,\quad
\USdim(A)=\lim_{n\to+\infty}\frac{-\min\{i\mid H^i(S^n(A))\ne 0\}}n.$$

The following ``test problems'' are addressed in \emph{loc.\,cit.}:
\begin{enumerate}
\item classify categories of dimension zero;
\item is dimension monotonous in semi-orthogonal decompositions?
\item how does dimension behave under tensor product of categories?
\item how does dimension behave in families of categories?
\end{enumerate}
In addition to these problems it is natural to ask:
\begin{enumerate}
\item[(5)] what is the relation between different notions of dimension?
\end{enumerate}
For some pairs (a problem, a version of dimension) the solution is given in \emph{loc.\,cit.}, while for other pairs we only can guess the answer  but not prove it. 
Examples studied in this paper suggest answers to some of the above questions and motivate conjectures and expectations from  \emph{loc.\,cit.}

Let us describe these answers and conjectures. We work over a fixed field $\k$.

\medskip
(1) Classification of categories of zero Rouquier dimension or diagonal dimension is given in \emph{loc.\,cit.} In Section~\ref{section_twovertices} we demonstrate a family  of  smooth and compact dg algebras~$A$ (in fact, just graded algebras) such that the category $\Perf A$ has negative lower Serre dimension. Moreover, $\LSdim$ can be  arbitrarily small for such categories. 

For the algebra of dual numbers one has $\LSdim(\k\langle t\rangle)=\USdim(\k\langle t\rangle)=0$, but this algebra is not smooth. In Examples~\ref{example_2} and~\ref{example_3} we show that $\LSdim(A)$ can be zero for (ordinary) finite-dimensional algebra $A$ of finite global dimension. On the other hand, we expect that upper Serre dimension of $\Perf A$ is non-negative for any smooth and compact dg algebra~$A$, and can be zero only in trivial cases. In particular, we expect that $\USdim(A)>0$ for any finite-dimensional algebra $A$ with $0<\gldim(A)<\infty$. 

Let $A$ be  a path algebra of a non-trivial finite ordered quiver with relations. We prove in Proposition~\ref{prop_positive} that the lower Serre dimension of~$A$ is positive.
That is, vanishing of $\LSdim(\TT)$ is an obstruction for a triangulated category $\TT$ (like in Examples~\ref{example_2} and~\ref{example_3}) to be equivalent to $D^b(\modd A)$, where $A$ is a path algebra of a non-trivial finite ordered quiver with relations. 

(2) We say that dimension is monotonous in semi-orthogonal decompositions if $\dim \AA\le \dim \TT$ for any admissible subcategory $\AA\subset \TT$. Rouquier and diagonal dimensions are monotonous, see \emph{loc. cit.} Lower Serre dimension is not monotonous, as can be seen by taking direct products of triangulated categories. Indeed, for  $\TT=\TT_1\times\TT_2$ one has $\LSdim (\TT)=\min (\LSdim (\TT_1),\LSdim (\TT_2))$. Hence, if $\LSdim (\TT_1)<\LSdim (\TT_2)$ then $\LSdim (\TT_2)>\LSdim (\TT)$ and monotonuity fails. 
Neither upper Serre dimension is monotonous. In \cite[Ex. 5.15]{EL} it is shown that the category $\Perf A_V$ from Section~\ref{section_twovertices}  with $V=\k[-1]\oplus \k\oplus \k[1]$ is embeddable into the bounded derived category of coherent sheaves on Hirzebruch surface~$\mathbb F_3$. By Proposition~\ref{prop_ww} we have $\USdim(\Perf A_V)=3$ while $\USdim(D^b(\coh \mathbb F_3))=\dim \mathbb F_3=2$.

(3) Let $A,B$ be smooth and compact dg algebras over $\k$. Then for upper and lower Serre dimensions one has $\dim (\Perf A\otimes_\k B)=\dim(\Perf A)+\dim (\Perf B)$, see \emph{loc. cit.} Also, for diagonal dimension it is demonstrated in \emph{loc. cit.} that
$\Ddim (\Perf A\otimes_\k B)\le \Ddim(\Perf A)+\Ddim (\Perf B)$. Our Proposition~\ref{prop_maintable} shows that this can be a strict inequality: one can take $A=B$ to be the path algebra of the quiver $\bul\to\bul$.
For Rouquier dimension, Proposition~\ref{prop_maintable} provides several examples of algebras such that $\Rdim(A),\Rdim ( B)$ and $\Rdim ( A\otimes_\k B)$ are known. In all of them one has $\Rdim ( A\otimes_\k B)\ge \Rdim(A)+\Rdim ( B)$, in some cases the inequality is strict. We expect that inequality 
$\Rdim (\Perf A\otimes_\k B)\ge \Rdim(\Perf A)+\Rdim (\Perf B)$ holds for all smooth and compact dg algebras $A,B$.

(4) Let $R$ be a commutative Noetherian ring. By a smooth family of algebras over $\Spec R$ we mean a smooth algebra $A$ over $R$ which is a projective $R$-module of finite rank. For a point $x\in \Spec R$ the fiber $A_x$ of a smooth family $A$ of algebras is defined as $A\otimes_R \k(x)$, it is a smooth finite-dimensional $\k(x)$-algebra. We prove in \emph{loc. cit.} that the 
function $\LSdim (A_x)$ (resp. $\USdim(A_x)$) is lower (resp. upper) semi-continuous on $\Spec R$. 

Consider $R=\k[t]$, let $A$ be the path algebra of the quiver
$$\xymatrix{0 \bul  \ar[rr]^x \ar@/_1pc/[rrrr]_z && \bul 1 \ar[rr]^y && \bul 2
}$$
over $R$ with one relation $yx-tz=0$. This defines a smooth family over $\k[t]$. Its general fiber $A_t, t\ne 0$ is isomorphic to the path algebra of the quiver $\bul\to\bul\to\bul$, the special fiber~$A_0$ is studied in Example~\ref{example_1}. 
We have  the following dimensions:
$$
\begin{array}{|l||c|c|c|c|}
		 \hline
			 & \Rdim & \Ddim & \LSdim  & \USdim\\
		 \hline
			A_t, t\ne 0 & 0 & 1 & 1/2& 1/2  \\
		 \hline	
		 A_0 & 1 & 1 & 1/2& 2 \\
		 \hline	
\end{array}
$$
That is, $\Rdim$ and $\USdim$ jump at special point.
This example supports our expectation that the function $\Rdim(A_t)$ should be upper semi-continuous on $\Spec R$.

(5) It follows from definitions that $\Rdim (\Perf A)\le \Ddim(\Perf A)$ and 
$\LSdim (\Perf A)\le \USdim(\Perf A)$ for any smooth compact dg algebra $A$. For the path algebras of Dynkin quivers (see Section~\ref{section_path}) dimensions are compared as follows
$$\Rdim<\LSdim=\USdim<\Ddim,$$
while for some categories from Section~\ref{section_twovertices} we have 
$$\LSdim<\Rdim=\Ddim<\USdim.$$
Nevertheless, in all examples we have studied we have $\LSdim\le \Ddim$ and $\Rdim\le \USdim$. We expect that for any smooth and compact dg algebra $A$ and $\TT=\Perf A$ such that $\LSdim (\TT)=\USdim (\TT)$ the following inequalities hold:
$$\Rdim(\TT)\le\LSdim(\TT)=\USdim(\TT)\le \Ddim(\TT).$$

\medskip
In this paper we concentrate on studying categories $D^b(\modd A)$, where $A$ is a path algebra of a finite quiver with relations. There is one example of geometrical origin that should be mentioned separately. In Section~\ref{section_orbifold} we study orbifold projective lines (in the sense of Geigle and Lenzing \cite{GL}). We prove 
\begin{theorem}[See Prop. \ref{prop_orbifold}]
\label{theorem_orbifold}
For any orbifold projective line $\X$ we have 
$$\LSdim(D^b(\coh\X))=\USdim(D^b(\coh\X))=\Rdim(D^b(\coh\X))=1.$$
\end{theorem}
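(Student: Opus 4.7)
The plan is to compute the Serre and Rouquier dimensions separately. By Geigle--Lenzing \cite{GL}, any orbifold projective line $\X$ carries a tilting bundle $T$, producing an equivalence $\TT := D^b(\coh\X) \simeq D^b(\modd A)$ with $A=\End(T)$ under which $T$ corresponds to the free module $A$; moreover, the Serre functor of $\TT$ is $S=(-)\otimes\omega_\X[1]$, where $\omega_\X$ is the dualizing sheaf. In particular, for each $n\ge 1$ and each $i\in\Z$,
\begin{equation*}
H^i(S^n(T)) \;=\; \Ext^{n+i}_\X(T,\,T\otimes\omega_\X^n) \;=\; H^{n+i}(\X,\; T^\vee\otimes T\otimes\omega_\X^n).
\end{equation*}

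For the Serre dimensions I would argue as follows. Since $\X$ is one-dimensional, the right-hand side vanishes whenever $n+i\notin\{0,1\}$, so $H^i(S^n(T))\ne 0$ forces $i\in\{-n,\,-n+1\}$. On the other hand, $S$ is an autoequivalence and $T\ne 0$, hence $S^n(T)\ne 0$, so at least one of the two possible cohomologies is nonzero. Both $\max\{i : H^i(S^n(T))\ne 0\}$ and $\min\{i : H^i(S^n(T))\ne 0\}$ therefore lie in $\{-n,\,-n+1\}$; dividing by $-n$ and letting $n\to\infty$ gives $\LSdim(\TT)=\USdim(\TT)=1$. The argument is uniform across the three Geigle--Lenzing regimes (domestic, tubular, wild), and no case analysis on $\deg\omega_\X$ is needed.

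For the Rouquier dimension, the lower bound $\Rdim(\TT)\ge 1$ is elementary: the line bundles $\O(m\vec c)$, $m\in\Z$, furnish infinitely many pairwise non-isomorphic indecomposables in $\coh\X$, so no single $G\in\TT$ can satisfy $\langle G\rangle_1=\TT$. For the upper bound $\Rdim(\TT)\le 1$, the naive homological estimate $\Rdim(\TT)\le\gldim(A)$ gives only $2$ (since the canonical algebra generically has global dimension $2$), so genuinely geometric input is needed. I would construct a classical generator of generation time~$1$ by exhibiting a two-term Beilinson-style resolution of every coherent sheaf $F\in\coh\X$ by summands of a suitable direct sum of line bundles $\O(\vec x)$---for instance, those appearing in the Geigle--Lenzing tilting together with one chosen twist---or equivalently by building a length-one resolution of the diagonal in $D^b(\coh(\X\times\X))$ (which is available since $\Delta\hookrightarrow\X\times\X$ is a regular embedding of codimension one) and transporting it through the Fourier--Mukai formalism. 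This construction of a generator of time~$1$, beating the tilting bound by one, is the main obstacle of the proof.
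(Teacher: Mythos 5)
Your computation of the Serre dimensions is correct and is essentially the paper's own argument (the Serre functor is $-\otimes\omega_\X[1]$ with $\omega_\X$ a line bundle, $\coh\X$ is hereditary, so the cohomology of $S^n(T)$ is concentrated in degrees $-n$ and $-n+1$). Your lower bound $\Rdim\ge 1$ via Krull--Schmidt and the infinitely many line bundles $\O(m\bar c)$ is also valid; the paper instead embeds $D^b(\coh\P^1)$ as an admissible subcategory via $p^*$ and invokes Rouquier's bound $\Rdim(D^b(\coh X))\ge\dim X$, but both routes work.

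The upper bound $\Rdim\le 1$, however, is left unproved, and it is the entire content of the paper's Proposition~\ref{prop_orbifold} and Lemma~\ref{lemma_SCQ}. Neither of your two sketches closes the gap as stated. A ``two-term resolution of every coherent sheaf by summands of a fixed finite direct sum of line bundles'' is exactly the hard point: an indecomposable torsion sheaf of length $m$ at an ordinary point has minimal resolution $0\to\O(-m\bar c)\to\O\to F\to 0$, so one must produce a \emph{non-minimal} two-term resolution with terms drawn from a bounded list, and this requires an argument (the paper does it via the evaluation map $F\to\bigoplus_i\Hom(F,\O_{Q_i})^*\otimes\O_{Q_i}$, proving its kernel $U$ is globally generated with $H^1(U)=0$ and that $\ker(H^0(U)\otimes\O\to U)\cong\O(-1)^d$, after pushing the whole problem down to $\P^1$). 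The ``equivalent'' diagonal route is not actually equivalent and is worse: $\O_{\X\times\X}(-\Delta)$ restricts to $\O(-\bar c)$ over a general point but to $\O(-\bar x_i)$ over $Q_i$, so it is not of the form $F\boxtimes G$, and the Fourier--Mukai transform with kernel $\O(-\Delta)$ does not obviously land in $\langle G\rangle_0$ for any fixed $G$; the Beilinson mechanism that works on $\P^1$ breaks precisely at the orbifold points. What the paper actually does is different from both sketches: it uses the semiorthogonal decomposition $D^b(\coh\X)=\langle\CC,\DD\rangle$ with $\CC=p^*D^b(\coh\P^1)$ and $\DD$ a product of type-$A$ categories (finitely many indecomposables), writes every object as a cone of some $f\colon C\to D$, and constructs for each $C\in\CC$ a triangle $s(C)\to C\to q(C)$ with $\Hom(s(C),\DD)=0$ and with $s(C),q(C)$ built from a finite list, so that $f$ factors through $q(C)$ and $\mathrm{Cone}(f)\cong \mathrm{Cone}(q(C)\to s(C)[1]\oplus D)$ is generated in one step. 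Some argument of this kind (or a genuine proof of your resolution claim) is needed; as written, the proposal asserts the conclusion of the main lemma rather than proving it.
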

This result agrees with a conjecture by Orlov saying that $\Rdim(D^b(\coh X))=\dim (X)$ for any smooth variety (or stack) $X$.
%Maybe write here about dimension for curves??

\medskip
Let us now describe the content and the structure of the paper.

In Section~\ref{section_prelim} we recall necessary background material (quivers, path algebras, their representations, derived and triangulated categories, exceptional collections and their mutations, derived Morita-equivalence) and introduce notation. 

In Section~\ref{section_dimensions} we give definitions of dimension from~\cite{EL} and formulate their properties that we will need. 

In Section~\ref{section_path} we study path algebras of finite ordered quivers. 
We compute all four dimensions for such algebras.
\begin{theorem}[See Propositions {\ref{prop_DynkinCY}, \ref{prop_dynkin}, \ref{prop_nondynkin}, \ref{prop_ddimnorels}}]
Let $\Gamma$ be a connected ordered quiver. Then dimensions of the category $D^b(\modd \k\Gamma)$ are given by
$$
\begin{array}{|l||c|c|c|c|}
		 \hline
			 & \Rdim & \Ddim & \LSdim  & \USdim\\
		 \hline
			\text{$\Gamma$ is a Dynkin quiver with the Coxeter number $h$} & 0 & 1 & \frac{h-2}h & \frac{h-2}h  \\
		 \hline	
		 \text{$\Gamma$ is not a Dynkin quiver} & 1 & 1 & 1& 1 \\
		 \hline	
\end{array}
$$
\end{theorem}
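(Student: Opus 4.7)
The plan is to handle the four columns separately, exploiting the dichotomy between Dynkin (finite representation type, periodic AR-translate) and non-Dynkin (infinite representation type but still hereditary).

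For $\Rdim$: in the Dynkin case $D^b(\modd \k\Gamma)$ has only finitely many indecomposables up to shift, so the direct sum of all indecomposables is a classical generator of generation time $0$, giving $\Rdim = 0$. For non-Dynkin $\Gamma$, a classical generator of time zero would force finite representation type, yielding $\Rdim \ge 1$, while hereditariness gives $\Rdim \le \gldim(\k\Gamma) = 1$. For $\Ddim$ the answer $\Ddim = 1$ is uniform: the standard bimodule resolution
$$0 \to \bigoplus_{\alpha\colon i\to j} Ae_j \otimes_\k e_i A \to \bigoplus_{i\in\Gamma_0} Ae_i \otimes_\k e_i A \to A \to 0$$
(whose exactness encodes hereditariness) exhibits the diagonal as generated in one step by $F\boxtimes G$ objects, so $\Ddim \le 1$. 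The reverse inequality uses the characterization from \cite{EL} that $\Ddim = 0$ forces $A$ to be a product of division rings, which is excluded by the presence of any arrow.

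For the Serre dimensions the key tool is the identification $S \cong \tau[1]$ on the non-projective indecomposables of $D^b(\modd \k\Gamma)$ (with $S(P_v) = I_v$ on the projectives), where $\tau$ is the Auslander-Reiten translate. In the Dynkin case with Coxeter number $h$, a classical computation---via the Coxeter element on the root lattice, or directly on the derived AR quiver $\Z\Gamma$---shows $S^h(M) \cong M[h-2]$ for every indecomposable $M$. Summing over vertices gives $S^{mh}(A) = A[m(h-2)]$, concentrated in cohomological degree $-m(h-2)$, so both $\LSdim$ and $\USdim$ equal $(h-2)/h$. For non-Dynkin $\Gamma$ the preinjective component of the AR quiver is infinite, so $\tau^{m}(I_v)$ is a nonzero honest module for every $m \ge 0$; iterating shows $S^n(A)$ is a module sitting in cohomological degree $-(n-1)$, which forces $\LSdim = \USdim = 1$.

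The main obstacle is the Dynkin identity $S^h \cong [h-2]$. The cleanest approach combines Gabriel's theorem (indecomposables parametrized by positive roots) with the known order and action of the Coxeter element on the root lattice, which together govern the periodicity of $\tau$ on the derived AR quiver; a more computational alternative is a case-by-case inspection of types $A_n, D_n, E_{6,7,8}$. A secondary subtlety is the convergence of the limits defining $\LSdim$ and $\USdim$: in the Dynkin case one evaluates along the arithmetic progression $n \in h\Z$ and controls intermediate terms by the finite orbit of $S$ modulo shift, while in the non-Dynkin case one must verify the cohomological support of $S^n(A)$ really does grow linearly with slope $1$ in both directions, a check reduced to the preinjective component of the AR quiver.
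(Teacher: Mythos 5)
Your proposal is correct and follows essentially the same route as the paper: Gabriel's theorem plus Krull--Schmidt for $\Rdim$, the bound $\Ddim\le\gldim=1$ together with the classification of $\Ddim=0$ categories, the fractional Calabi--Yau identity $S^h\cong[h-2]$ (which the paper simply cites from Herschend--Iyama and Miyachi--Yekutieli rather than rederiving via the Coxeter element) for the Dynkin Serre dimensions, and the disjointness of the preprojective and preinjective AR components for the non-Dynkin case. The only cosmetic difference is that you exhibit the standard two-term bimodule resolution explicitly for $\Ddim\le 1$, whereas the paper invokes the general inequality $\Ddim\le\gldim$ for basic algebras.
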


In Section~\ref{section_pathrels} we consider path algebras of finite ordered quivers with relations. Its main result is Proposition~\ref{prop_positive} saying that   the lower Serre dimension of such algebra is always positive. 

In Section~\ref{section_orbifold} we deal with orbifold projective lines. We prove Theorem~\ref{theorem_orbifold} here.

In Section~\ref{section_powers}  we study tensor powers of path algebras in Dynkin quivers of type $A$. Let~$B_m$ be the path algebra of the quiver $\bul\to\bul\to\ldots\to 
\bul$ ($m$ vertices). Let 
$$B_m^n=(B_m)^{\otimes n}=B_m\otimes B_m\otimes\ldots\otimes B_m.$$
Our interest in these algebras has two origins. First,  algebras $B_m^n$ provide examples of tensor products where Rouquier dimension can be computed. These examples can help to understand the relation between Rouquier dimension of a tensor product and of its factors.
Second, algebras $B_2^3, B_3^2$ and $B_2\otimes B_5$ are of special interest because of their relation with orbifold projective lines (from our point of view, they are distinguished by the property $\USdim=1$). These three algebras arise in non-commutative geometry, singularity theory and mirror symmetry. We will touch this relation in a forthcoming paper.

Serre dimension of algebras $B_m^n$ is known due to its multiplicativity:
$$\LSdim(B_m^n)=\USdim(B_m^n)=\frac{m-1}{m+1}\cdot n.$$
We have computed Rouquier dimension in the following cases, see Proposition~\ref{prop_maintable}:
$$
\begin{array}{|r||c|c|c|c|c||c|c|c|c|}
		 \hline
			 B_m^n & B_2  & B_2^2  &B_2^3  &B_2^4  &B_2^6  & B_3  &B_3^2  &B_3^3 & B_3^4  \\
		 \hline
			\Rdim(B_m^n) & 0 & 0 & 1& 1& 2 & 0 & 1& 1& 2\\
		 \hline	
\end{array}
$$

In Section~\ref{section_twovertices} we study ``graded'' quivers with two vertices. Consider a quiver  with two vertices and several arrows spanning a vector space $V$:
$$\bul\stackrel{V}{\Longrightarrow} \bul.$$
Put some $\Z$-grading on $V$ and denote the corresponding graded path algebra by $A_V$, we treat it as a dg algebra with zero differential.  
We prove 
\begin{theorem}[See Prop.~\ref{prop_ww}]
Assume $\dim(V)\ge 2$, then for the category $\Perf A_V$ we have
$$
\begin{array}{|c|c|c|c|}
		 \hline
			 \Rdim & \Ddim & \LSdim & \USdim \\
		 \hline
			1& 1& 1-w & 1+w\\
		 \hline	
\end{array}
$$
where $w=\max\{i\mid V^i\ne 0\}-\min\{i\mid V^i\ne 0\}$ is the ``width'' of $V$.
\end{theorem}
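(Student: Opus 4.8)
The plan is to analyze the dg algebra $A_V$ directly. Write $A_V = \k e_1 \oplus \k e_2 \oplus V$ where $e_1, e_2$ are the vertex idempotents and $V = e_2 V e_1$ spans the arrows; since the differential is zero and all arrows go one way, $A_V$ is formal and $\Perf A_V$ is a well-understood category — in fact $A_V$ has global dimension $1$ (in the graded sense), with the two simple dg modules $S_1, S_2$ forming a full exceptional pair $(S_2, S_1)$ in $\Perf A_V$, and projective resolution $0 \to P_2 \otimes V \to P_1 \to S_1 \to 0$. First I would record this exceptional pair and compute $\Hom^\bul(S_2, S_1) = V^\vee[-1]$ (a shift of the dual of $V$), so that $\Perf A_V$ is equivalent to the derived category of representations of a ``graded Kronecker quiver'' with $\dim V \ge 2$ arrows placed in the relevant degrees.

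For $\Rdim$ and $\Ddim$: the upper bounds $\Rdim \le \Ddim \le 1$ follow from smoothness of $A_V$ and the fact that the diagonal bimodule has a two-term resolution by $\boxtimes$-decomposables (the bar-type resolution $0 \to (A_V e_2) \boxtimes (e_2 A_V) \otimes V \to A_V \boxtimes A_V \to A_V \to 0$ for a directed algebra with arrows of weight one), giving $\Ddim(A_V) \le 1$; then $\Rdim \le \Ddim$ is general. For the lower bound $\Rdim \ge 1$, I would argue that $\Perf A_V$ is not generated in zero steps by any object: if it were, $\Perf A_V$ would have a single indecomposable generator up to summands and the category would be, essentially, semisimple, contradicting $\Hom^1(S_2, S_1) = V^\vee \ne 0$ (equivalently, $A_V$ has infinite-dimensional cohomology / is not derived-equivalent to a semisimple algebra when $\dim V \ge 2$, since the Kronecker-type quiver is not of finite representation type). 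Hence $\Rdim = \Ddim = 1$.

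For $\LSdim$ and $\USdim$: I would compute the Serre functor on the exceptional pair $(S_2, S_1)$. Since $\Perf A_V$ has a full exceptional collection, the Serre functor is the (shift of the) inverse Coxeter/AR-translation, and on a length-two exceptional collection with $\Hom^\bul(S_2,S_1) = V^\vee[-1]$ one gets an explicit formula: $S(S_1)$ and $S(S_2)$ are built by mutations from $S_1, S_2$ with shifts controlled by the degrees appearing in $V$. Iterating, $S^n(A_V)$ has cohomology concentrated between degrees governed by $n \cdot (\text{extreme weights of } V)$, so that $-\max$ and $-\min$ of the nonzero cohomological degrees of $S^n(A_V)$ grow like $n(1-w)$ and $n(1+w)$ respectively, yielding $\LSdim = 1 - w$ and $\USdim = 1 + w$. (For $w = 0$, i.e. $V$ concentrated in a single degree, $A_V$ is an ordinary Kronecker-type algebra and both Serre dimensions equal $1$, matching the $\Rdim = \Ddim = 1$ computation; the general case is obtained by ``spreading'' $V$ across degrees, which twists the Serre functor by the corresponding shifts.)

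The main obstacle is the Serre-dimension computation: one must pin down how the Serre functor $S$ acts at the level of cohomology of $A_V$ itself (not just on the generators), track the precise maximal and minimal nonzero degrees of $H^\bul(S^n(A_V))$, and show these limits exist and equal $1 \mp w$ exactly rather than merely being bounded by these values. I expect this to come down to an explicit matrix computation for the action of $S$ (or of the Coxeter transformation twisted by the grading shifts) on the Grothendieck group together with a careful bookkeeping of shifts; the exceptional-pair structure makes this tractable but the degree bookkeeping when $\dim V \ge 2$ and $V$ is spread over several degrees is where the real work lies.
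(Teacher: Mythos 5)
Your outline for $\Rdim$ and $\Ddim$ runs parallel to the paper's: the two-term ``bar-type'' resolution of the diagonal bimodule for a directed algebra with one arrow-weight is the explicit version of what the paper extracts from \cite[Prop.~4.13]{EL} (a two-block full exceptional collection), so $\Ddim\le 1$ is fine (modulo a typo: the left term should be $(A_Ve_2)\boxtimes (V\otimes e_1A_V)$, not $(A_Ve_2)\boxtimes (e_2A_V)\otimes V$). However, the step ``$\Rdim=0$ would force $\Perf A_V$ to be essentially semisimple, contradicting $\Hom^1(S_2,S_1)\ne 0$'' is wrong as stated: $\Rdim=0$ does not imply semisimplicity (path algebras of Dynkin quivers have $\Rdim=0$ yet nonzero $\Ext^1$'s between simples). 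The correct route is either the classification of $\Rdim=0$ categories \cite[Th.~3.7]{EL}, or Lemma~\ref{lemma_rdimpositive} of this paper (used here): $\Rdim=0$ plus connectedness would force $\LSdim=\USdim$, which is false once the Serre dimensions are known and $w>0$; and when $w=0$ one falls back on $A_V$ being an ordinary non-Dynkin hereditary algebra, Proposition~\ref{prop_nondynkin}. Your ``not of finite representation type'' heuristic can be turned into a proof for $w=0$, but ``essentially semisimple'' is not a valid intermediate claim.

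The genuine gap is the Serre dimension computation, which you yourself flag as undone. The observation that iterated mutations/cones keep the cohomology of $S^n(A_V)$ \emph{inside} a window whose endpoints grow like $n(1\mp w)$ gives only the inequalities $\LSdim\ge 1-w$ and $\USdim\le 1+w$; it does not give equality. To get equality you must produce cohomology classes in the extreme degrees of $H^\bul(S^n(A_V))$ and show they are not killed under iteration, and the Grothendieck-group/Coxeter-matrix computation you propose cannot do this, since $K_0$ sees only Euler characteristics and forgets cohomological degree. The paper's proof of Proposition~\ref{prop_ww} supplies exactly this missing ingredient: it introduces the trace-kernel spaces $\psi_n(V)\subset V\otimes V^*\otimes\cdots$ (Definition~\ref{def_psi}), proves by a simultaneous induction (Lemma~\ref{lemma_psi}) that $P_i\otimes_A^L(A^*)^{\otimes^L_Ak}$ is an explicit shift of the module $(\psi_\bullet(V)\Leftarrow\psi_\bullet(V))$, with the key exactness/surjectivity step deduced from the Serre functor being an equivalence (hence preserving indecomposables), and then exhibits explicit nonzero elements $e_1\otimes e^n\otimes e_1\otimes\cdots$ in the top and bottom degrees of $\psi_n(V)$ (Lemma~\ref{lemma_kw}), crucially using $\dim V\ge 2$. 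None of this combinatorial structure appears in your outline; as you say, ``the degree bookkeeping \ldots\ is where the real work lies,'' and that work is precisely what a proof must contain.
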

Note that any (dg enhanced) triangulated category, generated by an exceptional pair $(E_1,E_2)$, is equivalent to the category $\Perf A_V$ for $V=\Hom^\bul(E_1,E_2)$.

In Section~\ref{section_other} we consider three particular path algebras with relations.
Let $A$ be the path algebra of the quiver
$$
\xymatrix{0 \bul  \ar[rr]^x \ar@/_1pc/[rrrr]_z && \bul 1 \ar[rr]^y && \bul 2}
$$
with relation $yx=0$. Let $B$ be the path algebra of the quiver 
$$
\xymatrix{0 \bul  \ar[rr]^x  && \bul 1 \ar[rr]^y && \bul 2 \ar@/^1pc/[llll]^z}
$$
with relations $zy=xz=0$. Let $C$ be the path algebra of the quiver 
$$
\xymatrix{0 \bul  \ar@/^1pc/[rr]^x  && \bul 1 \ar@/^1pc/[ll]^y }
$$
with relation $xy=0$. 
We have  then the following dimensions, see Examples~\ref{example_1},~\ref{example_2}, ~\ref{example_3}:
$$
\begin{array}{|r|c|c|c|c|}
\hline
 & \Rdim & \Ddim & \LSdim  & \USdim\\
\hline
 A & 1 & 1 & 1/2& 2 \\
\hline	
 B & 1 & \text{?} & 0& 3 \\
\hline	
 C & 1 & 1 & 0 	& 2 \\
\hline	
\end{array}
$$

\medskip
I thank Valery Lunts for his motivating interest which initiated this study and made its results written down. Most of results from this paper were obtained in course of and because of collaboration with him. I am also grateful to Dmitry Orlov for making me aware of graded algebras from Section~\ref{section_twovertices}.

\section{Preliminary material}
\label{section_prelim}

We work over a fixed field $\k$, which can be arbitrary.

\subsection{Quivers}
For definition and facts about algebras, quivers and their representations we refer to \cite{Ri} or \cite{ARS}.

By definition, a quiver $\Gamma$ is an oriented graph. It consists of a set $\Gamma_0$ of vertices and a set $\Gamma_1$ of edges (also called arrows). For any arrow $a\in \Gamma_1$ we denote by $s(a)$ its \emph{source} and by $t(a)$ its \emph{target}. In this paper we consider only finite quivers, that is, the sets $\Gamma_0$ and $\Gamma_1$ are finite. A quiver $\Gamma$ is called \emph{ordered} if a linear order on $\Gamma_0$ is chosen such that for any arrow $a$ one has $s(a)<t(a)$. Clearly, a quiver can be ordered if and only if it has no oriented cycles.

A \emph{path} from $v_0\in \Gamma_0$ to $v_n\in\Gamma_0$ is by definition a sequence of arrows
$a_1,\ldots,a_n$ such that $s(a_1)=v_0$, $t(a_n)=v_n$ and $t(a_k)=s(a_{k+1})$ for all $k=1,\ldots, n-1$. Such path is written as $p=a_na_{n-1}\cdot \ldots \cdot a_1$, it is said to have source $s(p)=v_0$, target $t(p)=v_n$ and length $n$.  Clearly, any arrow is a path of length $1$. Also, by definition for any $v\in\Gamma_0$ we have a path of length $0$ from $v$ to $v$, denoted by $e_v$.

The path algebra $\k\Gamma$ of $\Gamma$ id defined as follows. As a $\k$-vector space, it has the basis formed by all paths in $\Gamma$. The composition law is defined on basic elements $p=a_n\ldots a_1$ and $q=b_m\ldots b_1$ by $pq=a_n\ldots a_1b_m\ldots b_1$ if $t(q)=s(p)$ and $pq=0$ otherwise. The algebra $\k\Gamma$ is associative, it has the identity  $1=\sum_{v\in \Gamma_0}e_v$, it is finite-dimensional if and only if $\Gamma$ has no oriented cycles. The algebra $\k\Gamma$ is usually non-commutative. Also, $\k\Gamma$ has a grading by the path length: $\k\Gamma=\oplus_{i\ge 0}(\k\Gamma)_i$.

Let $R=R(\k\Gamma)=\oplus_{i>0}(\k\Gamma)_i\subset \k\Gamma$ be the subspace spanned by paths of positive length. Clearly, $R$ is a two-sided ideal. If $\Gamma$ has no oriented cycles then $R$ is the radical of $\k\Gamma$. By \emph{relations} in $\Gamma$ we mean a family of elements in $R^2$, or a two-sided ideal $I$ in $\k\Gamma$, generated by such family. Clearly, one can always choose a finite set of ``homogeneous'' relations: any homogeneous  relation has the form $\sum_{i=1}^n \lambda_i p_i$ where $\lambda_i\in\k$ and $s(p_1)=\ldots=s(p_n)$, $t(p_1)=\ldots=t(p_n)$. 
For an ideal of relations $I$ a \emph{path algebra with relations} is defined as $\k\Gamma/I$. We usually use the same notations for elements in $\k\Gamma$ and their images in $\k\Gamma/I$. Note that the restriction of the quotient map $\k\Gamma\to\k\Gamma/I$ to the vector space $(\k\Gamma)_0\oplus (\k\Gamma)_1=\langle e_v, a\rangle_{v\in\Gamma_0, a\in\Gamma_1}$ is an isomorphism.

Let $A=\k\Gamma/I$. For any $u,v\in\Gamma_0$ we denote $A_{uv}=e_uAe_v\subset A$, it is a linear subspace and we have $A=\oplus_{u,v\in \Gamma_0}A_{uv}$. Clearly, $A_{uv}$ is the space of paths from $v$ to $u$ modulo some relations. In particular, if $\Gamma$ is ordered then
$A_{uv}=0$ for $u<v$.

For a quiver $\Gamma$ we define its \emph{length} $l(\Gamma)$ as the maximal length of paths in $\Gamma$. Equivalently, it is the minimal number $l$ for which there exists a division of $\Gamma_0$ into $l+1$ groups $\Gamma_{0,0},\Gamma_{0,1},\ldots,\Gamma_{0,l}$ such that for any arrow $a\in \Gamma_1$ one has $s(a)\in\Gamma_{0,i}, t(a)\in\Gamma_{0,j}$ where $i<j$.
Clearly, $l(\Gamma)<\infty$ if and only if $\Gamma$ can be ordered.

%We say that a quiver $\Gamma$ is \emph{a forest} if it is a forest as a non-oriented %graph. In other words, it means that $\Gamma$ has no cycles and loops. A forest is %\emph{a tree} if it is connected.
We say that a quiver $\Gamma$ is \emph{a tree} if it is a tree as a non-oriented graph. In other words, it means that $\Gamma$ is connected and has no cycles and loops.

%We say that a quiver
%$\Gamma$ is \emph{bipartite} if  its vertices are divided into two groups: %$\Gamma_0=S\sqcup T$ such that any arrow in $\Gamma$ goes from $S$ to $T$. 

\subsection{Representations of algebras}

By an algebra we mean an associative unital algebra over $\k$.
By modules and representations in this paper we always mean \textbf{right} modules/representations unless stated otherwise explicitly.
%In this paper all modules and representations are finitely generated. 
For an algebra $A$, by 
$\modd A$ (resp. $A\mmod$) we denote the category of finitely generated right (resp. left) $A$-modules, by 
$\Modd A$  we denote the category of all right $A$-modules.  Assume $A$ is finite dimensional, then the category $\modd A$  has Krull-Schmidt property: any module is a finite direct sum of indecomposable modules, and such decomposition is unique in the following sense: if $\oplus_{i=1}^n M_i\cong \oplus_{i=1}^m N_i$ are two such decompositions then $n=m$ and up to renumbering of modules one has $M_i\cong N_i$ for all $i$.

A finite dimensional $\k$-algebra $A$ is called \emph{basic} if the quotient $A/R(A)$ of $A$ modulo its radical is isomorphic to an  algebra $\k\times\ldots \times \k$.

Algebras $A$ and $B$ over $\k$ are called (right) \emph{Morita-equivalent} if there exists a $\k$-linear equivalence of categories $\Modd A\to \Modd B$. 

Recall classical results due to Gabriel. (1) For any basic finite dimensional algebra $A$ there exists a finite quiver $\Gamma$ and an ideal $I$ of relations satisfying $(R(\k\Gamma))^n\subset I\subset (R(\k\Gamma))^2$ for some $n$, such that $A$ is isomorphic to the finite-dimensional algebra $\k\Gamma/I$. Moreover, such quiver $\Gamma$ is uniquely defined. (2) If $\k$ is algebraically closed, then any finite-dimensional $\k$-algebra $A$ is Morita-equivalent to a uniquely defined basic finite-dimensional $\k$-algebra. That is, representation theory of finite-dimensional algebras is the same as representation theory of quivers with relations.

\subsection{Representations of quivers}

Let $A=\k\Gamma/I$ be a path algebra of quiver $\Gamma$ with some relations. Further we assume that $A$ is finite dimensional, it is always the case if $\Gamma$ has no oriented cycles.
By a \emph{representation} of quiver $\Gamma$ (resp. quiver $\Gamma$ with relations~$I$) we mean a right module over $\k\Gamma$ (resp. over $A$). 

Let $M$ be a right module over $A$. For any $v\in \Gamma_0$ we denote by $M_v$ the subspace $Me_v\subset M$. One has a decomposition of $\k$-vector spaces
$$M=\oplus_{v\in \Gamma_0} M_v.$$
Any arrow $a$ from $u$ to $v$ induces a map $M_a\colon M_v\to M_u$.

Equivalently, any collection of vector spaces $M_v, v\in\Gamma_0$ and linear maps $M_a\colon M_{t(a)}\to M_{s(a)}, a\in \Gamma_1$ defines a right module over $\k\Gamma$. If the maps $M_a$ obey relations defining an ideal $I\subset \k\Gamma$ then the above data defines a right module over $\k\Gamma/I$.

For any $v\in \Gamma_0$ we consider the submodule $P_v=e_vA\subset A$. One has 
$$A\cong \oplus_{v\in \Gamma_0} P_v,$$
hence modules $P_v$ are projective. Moreover, the modules $P_v$ are pairwise non-isomorphic, indecomposable and any indecomposable projective module is isomorphic to one of $P_v$.
For any $A$-module $M$ one has 
$$\Hom_A(P_v,M)=M_v\qquad\text{and}\qquad\Hom_A(P_u,P_v)=A_{vu}.$$
Similarly, for any $v\in \Gamma_0$ we consider the projective left $A$-modules $Q_v=Ae_v\subset A$. 
The modules $Q_v, v\in \Gamma_0$ are all (up to an isomorphism) indecomposable projective left $A$-modules.
For any left $A$-module $M$ one has 
$$\Hom_A(Q_v,M)=e_vM\qquad\text{and}\qquad\Hom_A(Q_u,Q_v)=A_{uv}.$$

There is an equivalence of categories 
$$\modd A\to (A\mmod)^{\op},$$
given by $M\mapsto M^*=\Hom_{\k}(M,\k)$.
Under this equivalence the projective left modules $Q_v$ correspond to the injective right modules $I_v:=Q_v^*$. 
%Explicitly,
%$$(I_v)_u=(Q_v)_u^*=A_{uv}^*$$
%and for $a\in A_{u_2u_1}$ the map
%$$(I_v)_{a}\colon (I_v)_{u_2}=A_{u_2v}^*\to A_{u_1v}^*=(I_v)_{u_1}$$
%is the dual to the multiplication map 
%$$A_{u_1v}\xra{a\cdot } A_{u_2v}.$$
For any $A$-module $M$ one has 
$$\Hom_A(M,I_v)=\Hom_A(Q_v,M^*)=M_v^*\qquad\text{and}\qquad\Hom_A(I_u,I_v)=A_{vu}.$$

Also, for any $v\in \Gamma_0$ we consider the module $S_v=\k$: the idempotent $e_v$ acts on $S_v$ by identity and any other path acts by zero. The modules $S_v, v\in\Gamma_0$ are simple and pairwise non-isomorphic. 

If $\Gamma$ has no oriented cycles then the modules $P_v,I_v$ and $S_v$ are exceptional: one has $\Hom_A(M,M)=\k$ and $\Ext_A^i(M,M)=0$ for $i\ne 0$. Also, any simple module is isomorphic to one of $S_v$.

If $\Gamma$ has length $l$ then the algebra $\k\Gamma/I$ has global dimension $\le l$ for any relations $I$. The path algebra $\k\Gamma$ has global dimension $1$ (unless there are no arrows and the algebra $\k\Gamma\cong \k^n$ is semi-simple).

\subsection{Triangulated and derived categories, exceptional collections, mutations}

For the definitions related with triangulated and derived categories see \cite{BK1}, \cite{Bo}, \cite{Ne}.

Let $\TT$ be a $\k$-linear triangulated category. For any objects $X,Y\in\TT$ and $i\in\Z$ we use the standard notation $\Hom^i(X,Y):=\Hom(X,Y[i])$. By 
$\Hom^{\bul}(X,Y)$ we denote the graded vector space $\oplus_{i\in \Z}\Hom^i(X,Y)$ (where $\Hom^i(X,Y)$ has degree $i$).

A $\k$-linear triangulated category $\TT$ is said to be \emph{$\Hom$-finite} (resp. $\Ext$-finite) if for any $X,Y\in\TT$ the $\k$-vector space $\Hom(X,Y)$ (resp. $\Hom^{\bul}(X,Y)$) is finite-dimensional.

A full triangulated subcategory of a triangulated category is called \emph{thick} if it is closed  under taking direct summands.

Recall that an object $E\in\TT$ is called \emph{exceptional} if $\Hom(E,E)=\k$ and $\Hom^i(E,E)=0$ for any $i\ne 0$. An ordered  collection $(E_1,\ldots,E_n)$ of objects in $\TT$ is called \emph{exceptional} if any $E_k$ is exceptional and $\Hom^i(E_k,E_l)=0$ for all $i$ and $k>l$. An exceptional collection $(E_1,\ldots,E_n)$ is said to be \emph{strong} if $\Hom^i(E_k,E_l)=0$ for any $i\ne 0$ and any $k,l$. 
An exceptional collection $(E_1,\ldots,E_n)$ is called \emph{full} if the minimal full strict triangulated subcategory in $\TT$ containing $E_1,\ldots,E_n$ is $\TT$.

Let $(E_1,\ldots,E_n)$ be an exceptional collection. It is said that the subset 
 $(E_p,E_{p+1},\ldots,E_q)$ forms \emph{a block} if $\Hom^i(E_k,E_l)=0$ for any $i$ and $p\le k<l\le q$. We write down such collections as 
 $$(E_1,\ldots,E_{p-1},\begin{matrix} E_p\\ \vdots \\ E_q\end{matrix}, E_{q+1},
 ,\ldots, E_n).$$

Let $(E,F)$ be an exceptional pair in a triangulated category $\TT$. Suppose $\TT$ is $\Ext$-finite. Then one can define left and right mutation of $(E,F)$ as exceptional pairs $(L_E(F),E)$ and $(F,R_F(E))$, where
\begin{gather*} 
L_E(F):= Cone( \Hom^{\bul}(E,F)\otimes E\to F);\\
R_F(E):=Cone(E\to \Hom^{\bul}(E,F)^*\otimes F)[-1].
\end{gather*}
Let $(E_1,\ldots,E_n)$ be an exceptional collection in $\TT$. 
By definition, for $i=2,\ldots,n$ its $i$-th left mutation is the exceptional collection
$$(E_1,\ldots, E_{i-2}, L_{E_{i-1}}(E_i),E_{i-1}, E_{i+1}, \ldots, E_n)$$
and for $i=1,\ldots,n-1$ its $i$-th right mutation is 
the exceptional collection
$$(E_1,\ldots, E_{i-1}, E_{i+1}, R_{E_{i+1}}(E_i),E_{i+2}, \ldots,  E_n).$$
Note that the subcategory generated by an exceptional collection does not change under mutations:
\begin{multline*}
\langle E_1,\ldots, E_{i-2}, L_{E_{i-1}}(E_i),E_{i-1}, E_{i+1}, \ldots, E_n\rangle=\\
=\langle E_1,\ldots,E_n\rangle=\\
=\langle E_1,\ldots, E_{i-1}, E_{i+1}, R_{E_{i+1}}(E_i),E_{i+2}, \ldots,  E_n\rangle.
\end{multline*}

Let $(E_1,\ldots,E_n)$ be an exceptional collection.
We denote the iterated mutations  by 
%and $(E_1,\ldots,E_n)$ form a block. Then the total left mutation
$$L_{E_1,\ldots,E_{n-1}}(E_n):=L_{E_1}(L_{E_2}(\ldots (L_{E_{n-1}}(E_n)))), \quad
R_{E_2,\ldots,E_n}(E_1):=R_{E_n}(R_{E_{n-1}}(\ldots (R_{E_2}(E_1)))).
$$
Suppose the objects $E_1,\ldots,E_n$ in an exceptional collection $E_1,\ldots,E_n,F$ form a block, then the total left mutation
can be computed in one step, one has
\begin{equation}
\label{eq_mutleft}
L_{E_1,\ldots,E_n}(F)=Cone (\oplus_i \Hom^{\bul}(E_i,F)\otimes E_i\to F).
\end{equation}
Similarly, 
suppose that  $(E,F_1,\ldots,F_n)$ is an exceptional collection and the objects $F_1,\ldots,F_n$ form a block. Then 
\begin{equation}
\label{eq_mutright}
R_{F_1,\ldots,F_n}(E)=Cone (E\to \oplus_i \Hom^{\bul}(E,F_i)^*\otimes F_i)[-1].
\end{equation}

\medskip
Let $A$ be a $\k$-algebra. We denote by $D(A)=D(\Modd A)$ the undounded derived category of all $A$-modules and by $\Perf A\subset D(A)$ its full subcategory of perfect complexes, that is, complexes which are  quasi-isomorphic to bounded complexes of finitely generated projective $A$-modules. If $A$ is Noetherian (for example, finite-dimensional), we denote by $D^b(\modd A)$ the bounded derived category of finitely generated $A$-modules and by $D^b_{fg}(A)$ the full subcategory in $D(A)$ formed by complexes whose cohomology modules are finitely generated and almost all are zero. There is a natural equivalence $D^b(\modd A)\to D^b_{fg}(A)$. 
Also, there is an inclusion 
$$\Perf A\subset D^b_{fg}(A),$$
which is an equality if $A$ has finite global dimension. Hence, for Noetherian algebra $A$ of finite global dimension the categories $\Perf A$ and $D^b(\modd A)$ are equivalent. We prefer to use notation $\Perf A$ in this case.

The categories $D(A), \Perf A, D^b_{fg}(A), D^b(\modd A)$ are triangulated, Karoubian and $\k$-linear. 
Let $A$ be a finite-dimensional algebra, then  the category $\Perf A$ is $\Ext$-finite; the category $D^b(\modd A)$ is $\Ext$-finite if and only if $A$ has finite global dimension.

Suppose $A=\k\Gamma/I$ is a path algebra of some quiver with relations and $\Gamma$ has no oriented cycles. If we order $\Gamma_0=\{1,2,\ldots,n\}$ compatibly with arrows then the collection  of projective modules
$P_1,\ldots,P_n$ is full and strong exceptional in $D^b(\modd A)$, and one has 
$\End (\oplus_iP_i)\cong A$.

\subsection{Tilting}

Let $\TT$ be a triangulated category.
% with a DG-enhancement. For example, $\TT$ can be the bounded derived category %$D^b(\modd A)$ of finitely generated modules over $A$, or the category $\Perf (A)$ of %perfect complexes over $A$, where $A$ is a noetherian ring. In both cases one can take %an enhancement by bounded above complexes of projective finitely generated $A$-modules. 
An object $G\in \TT$ is called  \emph{tilting}  if 
$\Hom^i(G,G)=0$ for all $i\ne 0$. Moreover, $G$ is called a   \emph{tilting generator} of $\TT$ if $G$ is tilting and $\TT$ is the smallest thick triangulated subcategory of $\TT$ containing $G$.

For example, if $(E_1,\ldots,E_n)$ is a strong exceptional collection in $\TT$ then the object $\oplus E_i\in \TT$ is tilting. We will frequently use the following standard result.
\begin{prop}
\label{prop_tilting}
Let $\TT$ be a triangulated Karoubian $\k$-linear category with a dg enhancement. For example, $\TT$ can be any  thick triangulated subcategory in the derived category $D(B)$ for a $\k$-algebra $B$. Assume $(E_1,\ldots,E_n)$ is a strong exceptional collection in $\TT$. Let $E=E_1\oplus\ldots \oplus E_n$ and $A:=\End_\TT E$.
Then 
there is an equivalence 
$$\langle E_1,\ldots,E_n\rangle \xra\sim \Perf A$$ 
given by the well-defined functor $R\Hom(E,-)$. Moreover, 
the algebra 
$A$ is basic and has finite global dimension.
\end{prop}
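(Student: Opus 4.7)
The plan is to identify $E$ as a tilting generator of the subcategory $\langle E_1,\ldots,E_n\rangle$ and then invoke the standard tilting equivalence theorem (due to Keller in the dg enhanced setting). First, strongness of $(E_1,\ldots,E_n)$ gives $\Hom^i(E_k,E_l)=0$ for all $i\ne 0$ and all $k,l$, hence $\Hom^i(E,E)=0$ for $i\ne 0$, so $E$ is tilting. Since $\TT$ is Karoubian and each $E_k$ is a direct summand of $E$, the thick subcategory of $\TT$ generated by $E$ coincides with $\langle E_1,\ldots,E_n\rangle$.

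Using the dg enhancement, one forms the endomorphism dg algebra $\mathcal E^{\bul}=\Hom^{\bul}(E,E)$; the vanishings above show that $\mathcal E^{\bul}$ is quasi-isomorphic to its zeroth cohomology $A$ placed in degree $0$, so $\Perf \mathcal E^{\bul}\simeq\Perf A$. The derived functor $R\Hom(E,-)$ is then well-defined on $\TT$ and sends the generator $E$ to the free module $A\in\Perf A$. Since $E$ is a compact generator of $\langle E_1,\ldots,E_n\rangle$ and $R\Hom(E,-)$ induces an isomorphism on graded Hom-spaces out of $E$, Keller's theorem upgrades this to the required equivalence $\langle E_1,\ldots,E_n\rangle\xra\sim \Perf A$. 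This is the most delicate step of the proof: it crucially uses the dg enhancement, without which $R\Hom(E,-)$ would not be canonically defined as a triangulated functor landing in $\Perf A$.

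For the algebraic properties of $A$: the decomposition $E=\oplus_k E_k$ combined with exceptionality forces $\End(E_k)=\k$ for each $k$ and $\Hom(E_k,E_l)=0$ whenever $k>l$. The idempotents $e_k=\mathrm{id}_{E_k}\in A$ form a complete system of primitive orthogonal idempotents satisfying $e_kAe_k\cong\k$, so $A/R(A)\cong\k^n$ and $A$ is basic. By Gabriel's theorem recalled in the preliminaries, $A\cong\k\Gamma/I$ for a unique quiver $\Gamma$; the identification $A_{uv}=e_uAe_v\cong \Hom(E_v,E_u)$ together with the exceptional vanishing for $v>u$ shows that $\Gamma$ is ordered, hence acyclic and of length at most $n-1$. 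By the bound $\gldim(\k\Gamma/I)\le l(\Gamma)$ recorded in the preliminaries, $\gldim(A)<\infty$.
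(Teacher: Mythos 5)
The paper states Proposition \ref{prop_tilting} as a standard result and gives no proof, so there is nothing to compare line by line; your argument is the standard one (Keller/Bondal) that the paper is implicitly invoking, and it is correct. The key steps are all in place: formality of the endomorphism dg algebra from the concentration of $\Hom^\bul(E,E)$ in degree $0$, Keller's tilting theorem for the equivalence $\langle E\rangle\simeq\Perf A$, directedness plus $\End(E_k)=\k$ for basicness, and the length bound on the (ordered) Gabriel quiver for finiteness of global dimension. The only phrasing I would adjust is ``compact generator'': since $\langle E_1,\ldots,E_n\rangle$ is a small thick subcategory, the relevant hypothesis is that $E$ is a classical generator of it, which you have already established; compactness plays no separate role here.
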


%The next result is the main tool for our investigations.
%\begin{theorem}
%In the above assumptions suppose the category $\TT$ is Karoubian. Denote  %$B=\End_{\TT}(G)$. Then the functor
%$$R\Hom(G,-)\colon \TT\to\Perf(B)$$
%is well-defined and restricts to an equivalence between $\langle G\rangle$ and $\Perf %(B)$.
%\end{theorem}
%
%We point out at a very important special case.
%\begin{theorem}
%\label{theorem_bondalec}
%Let $\TT$ be a category with a DG-enhancement. Suppose $(E_1,\ldots,E_n)$ is a full %strong exceptional collection in $\TT$. Denote $E=\oplus_{i=1}^n E_i$ and %$B=\End_{\TT}(E)$. Then the functor $R\Hom(E,-)\colon \TT\to D^b(\modd B)=\Perf(B)$ is %an equivalence, and the algebra $B$ has finite global dimension.
%\end{theorem}

\begin{definition}
Algebras $A$ and $B$ over $\k$ are called (right) \emph{derived Morita-equivalent} if there exists a $\k$-linear equivalence of categories $\Perf A\to\Perf B$. By \cite[6.4]{Ke} or \cite{Rick}, $A$ and $B$ are derived Morita-equivalent iff the categories $D(A)$ and $D(B)$ are equivalent, and iff  the categories $D^b(\modd A)$ and $D^b(\modd B)$ are equivalent (the latter holds under the assumptions that $A$ and $B$ are Noetherian).
\end{definition}

We will also need a weaker notion.
\begin{definition}
\label{def_dsoc}
We say that an algebra $A$ is a \emph{derived semi-orthogonal subalgebra} of an algebra $B$ if  $\Perf A$ is $\k$-linearly equivalent to an admissible subcategory in $\Perf B$. 
\end{definition}

\begin{lemma}
\label{lemma_gena}
Let $A$ and $B$ be two $\k$-algebras. Then

\begin{enumerate}
\item $A$ and $B$ are derived Morita-equivalent if and only if there exists a tilting generator $G\in \Perf B$ with $\End_B(G)\cong A$.

\item  Assume also that $A,B$ are finite-dimensional and $\gldim A<\infty$. Then 
$A$ is a derived semi-orthogonal subalgebra of  $B$ if and only if there exists a tilting object~$G$ in $\Perf B$ such that $\End_B(G)\cong A$.
\end{enumerate}
\end{lemma}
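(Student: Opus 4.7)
\emph{Plan of proof.}
The plan is to reduce both parts to the classical tilting theory of Keller and Rickard, with additional care in part~(2) for admissibility. The two ``only if'' directions are parallel: given an equivalence (resp.\ an admissible embedding) $F\colon \Perf A\to \Perf B$, set $G:=F(A)$. Then $G\in \Perf B$, full faithfulness of $F$ yields $\End_B(G)\cong \End_A(A)\cong A$, and $\Hom^i_B(G,G)\cong \Ext^i_A(A,A)=0$ for $i\ne 0$, so $G$ is tilting; in case~(1), since $A$ classically generates $\Perf A$ and $F$ is an equivalence, $G$ is moreover a tilting generator of $\Perf B$.

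For the ``if'' direction of (1) I would invoke the Keller--Rickard tilting theorem \cite{Ke,Rick}: the adjoint pair
$$\Phi:=(-)\otimes_A^L G\colon D(A)\to D(B),\qquad \Psi:=R\Hom_B(G,-)\colon D(B)\to D(A)$$
consists of mutually quasi-inverse equivalences, whose restrictions to compact objects give $\Perf A\simeq \Perf B$. The unit $A\to\Psi\Phi(A)=R\Hom_B(G,G)=A$ is an isomorphism by the tilting hypothesis, and symmetrically for the counit at $G$; since $A$ (resp.\ $G$) is a compact generator of $D(A)$ (resp.\ $D(B)$), a standard d\'evissage using that $\Phi,\Psi$ preserve triangles and arbitrary coproducts propagates these isomorphisms to the whole derived categories.

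For the ``if'' direction of (2), I would again start from $\Phi=(-)\otimes_A^L G$, using the hypothesis $\gldim A<\infty$ to identify $\Perf A=D^b(\modd A)$. Every object of $\Perf A$ is quasi-isomorphic to a bounded complex of finitely generated projective $A$-modules, and $\Phi$ sends such a complex to a bounded complex of direct summands of finite sums of $G$, hence into $\Perf B$. The same ``unit is an isomorphism at $A$'' argument, extended by d\'evissage over the thick closure $\langle A\rangle=\Perf A$, shows that $\Phi$ is fully faithful. Set $\AA:=\Phi(\Perf A)\subset \Perf B$. A right adjoint to the inclusion $\AA\hookrightarrow \Perf B$ is given by $\Phi\circ \Psi$: since $B$ is finite-dimensional and $G,X\in \Perf B$, each $\Ext^i_B(G,X)$ is finite-dimensional and vanishes for $|i|\gg 0$, placing $R\Hom_B(G,X)$ in $D^b(\modd A)=\Perf A$.

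The main technical point is the existence of a \emph{left} adjoint to $\AA\hookrightarrow \Perf B$, because the ambient $\Perf B$ need not carry a Serre functor ($\gldim B$ is not assumed finite), so there is no analogous bimodule recipe dual to $R\Hom_B(G,-)$. I would extract it from the saturatedness of $\Perf A$: since $A$ is finite-dimensional with finite global dimension, $\Perf A$ is $\Ext$-finite and admits a Serre functor, and by the Bondal--Kapranov representability theorem every cohomological functor of finite type on $\Perf A$ is (co)representable. Applied to the covariant functor $Y\mapsto \Hom^\bullet_{\Perf B}(X,\Phi Y)$ on $\Perf A$ (bounded in degree and pointwise finite-dimensional because $X,\Phi Y\in \Perf B$ over the finite-dimensional~$B$), this produces $L(X)\in \Perf A$ corepresenting it; then $\Phi\circ L$ is the sought left adjoint, completing admissibility of $\AA$.
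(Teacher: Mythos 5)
Your proof is correct, and its skeleton matches the paper's: both directions of ``only if'' are handled by setting $G:=F(A)$, part (1)'s ``if'' is the Keller--Rickard tilting theorem, and part (2)'s ``if'' reduces to showing that the image of $\Perf A$ in $\Perf B$ is admissible. The difference is in how that admissibility is established. The paper dispatches it in one line by citing Orlov's Prop.~3.17 (an Ext-finite ambient category plus a strong generator of the subcategory force admissibility; under the hood this is Bondal--Van den Bergh representability). You instead build both adjoints by hand: the right adjoint as $\Phi\circ R\Hom_B(G,-)$, landing in $\Perf A=D^b(\modd A)$ because $B$ is finite-dimensional and $\gldim A<\infty$ -- a nice concrete point the paper leaves implicit -- and the left adjoint by corepresenting $Y\mapsto\Hom^\bullet_B(X,\Phi Y)$ on $\Perf A$. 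The only place where your wording is looser than your argument is the justification of that last step: ``$\Ext$-finite and admits a Serre functor'' is not by itself the hypothesis of a representability theorem (it is a necessary, not sufficient, condition for saturatedness). What you actually need, and what is true in your setting, is the Bondal--Kapranov theorem that $D^b(\modd A)$ is saturated for a finite-dimensional algebra $A$ of finite global dimension; citing that (or the strong-generator route of Bondal--Van den Bergh/Orlov, as the paper does) closes the point. With that citation made precise, your argument is a complete, self-contained substitute for the paper's appeal to Orlov.
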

\begin{proof}
For (1) see \cite[6.1]{Ke}, let us explain (2).

For ``only if'' implication, assume $\phi\colon \Perf A\to \Perf B$ is a fully faithful functor and take $G:=\phi(A)$.
For ``if'' part, we have an equivalence $\langle G\rangle\to \Perf A$ given by the functor $R\Hom_B(G,-)$. It remains to check that the subcategory $\langle G\rangle$ is admissible in $\Perf B$. This follows from \cite[Prop. 3.17]{Or} since $\Perf B$ is $\Ext$-finite and $\Perf A$ has a strong generator.
\end{proof}

\begin{lemma}
\label{lemma_tensor}
Let $A,A',B,B'$  be $\k$-algebras. 
\begin{enumerate}
\item Assume $A$ is derived Morita-equivalent to $B$ and $A'$ is derived Morita-equivalent to $B'$.  Then $A\otimes_{\k} A'$ is derived Morita-equivalent to $B\otimes_{\k} B'$.
\item Suppose also that $A,A',B,B'$ are finite-dimensional algebras, and $A,A'$ are basic over $\k$ and have finite global dimension. Assume $A$ is  a derived semi-orthogonal subalgebra of $B$ and $A'$ is a derived semi-orthogonal subalgebra of $B'$.  Then $A\otimes_{\k} A'$ is a derived semi-orthogonal subalgebra of  $B\otimes_{\k} B'$.
\end{enumerate}
\end{lemma}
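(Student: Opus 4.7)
For both parts the plan is to construct the relevant tilting complex on $\Perf(B \otimes_\k B')$ as an external tensor product and then invoke Lemma~\ref{lemma_gena}. For part~(1), fix tilting generators $G \in \Perf B$ and $G' \in \Perf B'$ with $\End_B(G) \cong A$ and $\End_{B'}(G') \cong A'$, and set $G \boxtimes G' := G \otimes_\k G' \in \Perf(B \otimes_\k B')$ (perfect, as the external tensor over a field of two perfect complexes is perfect). A K\"unneth-style identification
\[
\Hom^\bul_{B \otimes_\k B'}(G \boxtimes G', G \boxtimes G') \cong \Hom^\bul_B(G, G) \otimes_\k \Hom^\bul_{B'}(G', G'),
\]
which is routine over a field once $G, G'$ are perfect, shows that $G \boxtimes G'$ is tilting with $\End(G \boxtimes G') \cong A \otimes_\k A'$.

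It remains in part~(1) to show $G \boxtimes G'$ generates $\Perf(B \otimes_\k B')$. The free module $B \otimes_\k B'$ is a classical generator there, so it suffices to place it inside $\langle G \boxtimes G' \rangle$. Since $G$ generates $\Perf B$ we have $B \in \langle G \rangle$; the triangulated functor $(-) \boxtimes G' \colon \Perf B \to \Perf(B \otimes_\k B')$ (well-defined because $G'$ is perfect) then places $B \boxtimes G'$ in $\langle G \boxtimes G' \rangle$. Applying next the triangulated functor $B \boxtimes (-)$ and using $B' \in \langle G' \rangle$ yields $B \boxtimes B' \in \langle B \boxtimes G' \rangle \subset \langle G \boxtimes G' \rangle$, and Lemma~\ref{lemma_gena}(1) finishes the argument.

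For part~(2) choose $G, G'$ as tilting objects via Lemma~\ref{lemma_gena}(2); the K\"unneth step above again produces a tilting object $G \boxtimes G'$ in $\Perf(B \otimes_\k B')$ with endomorphism algebra $A \otimes_\k A'$. To apply Lemma~\ref{lemma_gena}(2) one must additionally verify that $\gldim(A \otimes_\k A') < \infty$. Here the basicness hypothesis is crucial: one checks $R(A \otimes_\k A') = R(A) \otimes A' + A \otimes R(A')$, so $A \otimes_\k A'$ is again basic with semisimple quotient $\k^{n} \otimes_\k \k^{m} = \k^{nm}$; its simple right modules are the products $S_j \boxtimes S'_l$ of the simples on each side, and K\"unneth for $\Ext$ gives
\[
\Ext^i_{A \otimes_\k A'}(S_j \boxtimes S'_l, S_{j'} \boxtimes S'_{l'}) \cong \bigoplus_{p+q=i} \Ext^p_A(S_j, S_{j'}) \otimes_\k \Ext^q_{A'}(S'_l, S'_{l'}),
\]
whence $\gldim(A \otimes_\k A') \le \gldim A + \gldim A'$. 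I expect this global-dimension bound to be the main subtle point: over non-perfect fields the tensor product of two finite-global-dimension $\k$-algebras can have infinite global dimension (inseparable field extensions are the standard example), and the basicness of $A, A'$, which forces simple modules to be one-dimensional over $\k$, is exactly what makes the K\"unneth argument for $\Ext$ go through cleanly.
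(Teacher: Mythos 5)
Your proof is correct and follows essentially the same route as the paper's: external tensor product of tilting objects, a K\"unneth identification of endomorphisms, generation of $B\otimes_\k B'$ from $G\otimes_\k G'$, and Lemma~\ref{lemma_gena}. The only divergence is that where the paper simply cites Auslander's theorem for the finiteness of $\gldim(A\otimes_\k A')$, you prove it directly via K\"unneth for $\Ext$ between the one-dimensional simples of the basic algebras --- which is precisely the content of the cited result, and your remark about where basicness (separability of the semisimple quotient) is genuinely needed is accurate.
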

%We sketch the proof.
\begin{proof}
In both cases, by Lemma~\ref{lemma_gena} there exist tilting objects $G\in \Perf B$, $G'\in \Perf B'$ such that $\End(G)\cong A$, 
$\End(G')\cong A'$. Then $G\otimes G'$ is a tilting object in $\Perf B\otimes B$ with  
$\End(G\otimes H)\cong A\otimes A'$. 

In (1) it remains to note that $G\otimes G'$ is a generator of $\Perf B\otimes B'$. Indeed, $B\in \langle G\rangle$ and $B'\in \langle G'\rangle$, hence $B\otimes B'\in \langle G\otimes G'\rangle$. But $B\otimes B'$ generates $\Perf B\otimes B'$.

In (2) it remains to note that the algebra $A\otimes_\k A'$ has finite global dimension since $A$ and $A'$ are basic and have finite global dimension, see \cite[Th. 16]{Au}. Now we use Lemma~\ref{lemma_gena}.
\end{proof}

\begin{lemma}
\label{lemma_bondalec}
Let $B$ be a finite-dimensional algebra and $E_1,\ldots,E_n$ be a strong exceptional collection in $\Perf B$. Denote $A:=\End_B(\oplus E_i)$. Then $A$ is a derived semi-orthogonal subalgebra of $B$.
\end{lemma}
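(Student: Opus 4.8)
The plan is to reduce Lemma~\ref{lemma_bondalec} directly to part~(2) of Lemma~\ref{lemma_gena}. Set $E=\bigoplus_{i=1}^n E_i\in\Perf B$ and $A=\End_B(E)$. Since $(E_1,\ldots,E_n)$ is a strong exceptional collection, $\Hom^i_B(E,E)=\bigoplus_{k,l}\Hom^i_B(E_k,E_l)=0$ for $i\neq 0$, so $E$ is a tilting object in $\Perf B$. By Proposition~\ref{prop_tilting} applied to the thick subcategory $\langle E_1,\ldots,E_n\rangle\subset\Perf B$ (which has a dg enhancement, being a thick subcategory of $D(B)$), the algebra $A$ is basic and has finite global dimension; in particular $A$ is finite-dimensional (it is a subquotient-type endomorphism algebra of finitely many perfect complexes, hence $\Ext$-finiteness of $\Perf B$ gives $\dim_\k A<\infty$). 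Thus $A$ and $B$ satisfy the hypotheses of Lemma~\ref{lemma_gena}(2): both finite-dimensional, and $\gldim A<\infty$.

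Now the ``if'' direction of Lemma~\ref{lemma_gena}(2) applies verbatim: we have produced a tilting object $G:=E$ in $\Perf B$ with $\End_B(G)\cong A$, and the lemma concludes that $A$ is a derived semi-orthogonal subalgebra of $B$. Concretely, the functor $R\Hom_B(E,-)$ gives an equivalence $\langle E\rangle=\langle E_1,\ldots,E_n\rangle\xra\sim\Perf A$ (this is exactly the content of Proposition~\ref{prop_tilting}), and $\langle E\rangle$ is admissible in $\Perf B$ by \cite[Prop.~3.17]{Or}, using that $\Perf B$ is $\Ext$-finite (as $B$ is finite-dimensional) and that $\Perf A$ has a strong generator. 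This is precisely Definition~\ref{def_dsoc}.

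The only point that needs a word of care is the finite-dimensionality of $A$ as a standalone hypothesis of Lemma~\ref{lemma_gena}(2): $A=\bigoplus_{k,l}\Hom_B(E_k,E_l)$ is a finite direct sum of $\k$-vector spaces $\Hom_B(E_k,E_l)$, each of which is finite-dimensional because $\Perf B$ is $\Ext$-finite for a finite-dimensional algebra $B$; hence $\dim_\k A<\infty$. I do not expect any genuine obstacle here — the lemma is essentially a packaging of Proposition~\ref{prop_tilting} together with Orlov's admissibility criterion, both already available — so the ``proof'' is really just the observation that a strong exceptional collection furnishes exactly the tilting object required by Lemma~\ref{lemma_gena}(2).

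\begin{proof}
Let $E=\bigoplus_{i=1}^n E_i$. Strongness of the collection gives $\Hom^i_B(E,E)=\bigoplus_{k,l}\Hom^i_B(E_k,E_l)=0$ for $i\neq 0$, so $E$ is a tilting object in $\Perf B$ with $\End_B(E)=\bigoplus_{k,l}\Hom_B(E_k,E_l)=A$. Since $B$ is finite-dimensional, $\Perf B$ is $\Ext$-finite, so each $\Hom_B(E_k,E_l)$ is finite-dimensional and hence $\dim_\k A<\infty$. By Proposition~\ref{prop_tilting} the algebra $A$ has finite global dimension. Therefore $A$ and $B$ satisfy the hypotheses of Lemma~\ref{lemma_gena}(2), and the existence of the tilting object $E$ with $\End_B(E)\cong A$ yields, by that lemma, that $A$ is a derived semi-orthogonal subalgebra of $B$.
\end{proof}
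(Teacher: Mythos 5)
Your proof is correct and takes essentially the same route as the paper's one-line proof (which just invokes Proposition~\ref{prop_tilting} and Lemma~\ref{lemma_gena}(2) with $G=\oplus E_i$). You have simply spelled out the intermediate checks — that $E$ is tilting, that $A$ is finite-dimensional via $\Ext$-finiteness of $\Perf B$, and that $\gldim A<\infty$ via Proposition~\ref{prop_tilting} — which the paper leaves implicit.
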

\begin{proof}
Follows from Proposition~\ref{prop_tilting} and Lemma~\ref{lemma_gena}(2), take $G:=\oplus E_i$.
\end{proof}

The main tool for deriving derived Morita-equivalences are mutations of exceptional collections.
\begin{definition}
\label{def_mutequiv}
Let us say that two $\k$-algebras $A$ and $B$ are \emph{mutation-equivalent} if there exists a $\k$-linear triangulated category $\TT$ with a dg enhancement and strong exceptional collections
$(E_1,\ldots,E_n)$ and $(E'_1,\ldots,E'_n)$ in $\TT$ such that $A\cong \End(\oplus E_i)$, $B\cong \End(\oplus E'_i)$, and $(E'_1,\ldots,E'_n)$ can be obtained from $(E_1,\ldots,E_n)$ by several mutations and shifts of objects. 
\end{definition}
\begin{remark}
\label{remark_mutequiv}
It follows from Proposition~\ref{prop_tilting} that
\begin{enumerate}
\item In Definition~\ref{def_mutequiv} one can always take $\TT=\Perf A$ and $E_i=P_i$ to be the indecomposable projective $A$-modules. 
\item Mutation equivalent algebras are automatically basic and have finite global dimension.
\item Mutation equivalent algebras $A$ and $B$ are derived Morita-equivalent since
$$\Perf A\cong \langle E_1,\ldots,E_n\rangle= \langle E'_1,\ldots,E'_n\rangle\cong \Perf B.$$
\end{enumerate}
\end{remark}

\section{Dimension of triangulated categories: Rouquier, diagonal, Serre}
\label{section_dimensions}

Here we recall notions of dimension for triangulated categories discussed in \cite{EL}. We focus on derived categories of finite-dimensional algebras. Note that all our definitions of dimension for algebras are derived Morita invariant.

\subsection{Rouquier dimension}

Let us recall some notions related to generation of triangulated categories. We refer to \cite{EL} for details.

Let $\TT$ be a triangulated category and $G\in \TT$ an object. Define full  subcategories $[G]_i$ and $\langle G\rangle_i\subset \TT$ as follows. Let $[G]_0$ be the full  subcategory formed by all finite direct sums of shifts of $G$. Let $[G]_k$ be the full
subcategory formed by such objects $F$ that there exists a distinguished triangle
$F_0\to F\to F_{k-1}\to F_0[1]$ with $F_0\in [G]_0$ and $F_{k-1}\in [G]_{k-1}$. Let $\langle G\rangle_k$ be the idempotent closure of $[G]_k$ in $\TT$. Denote
$$[G]:=\cup_k [G]_k\qquad\text{and}\qquad \langle G\rangle:=\cup_k \langle G\rangle_k,$$
they are full triangulated subcategories in $\TT$. An object $G$ is called a \emph{classical generator} of $\TT$ if $\TT=\langle G\rangle$. An object $G$ is called a \emph{strong generator} of $\TT$ if $\TT=\langle G\rangle_n$ for some $n$. Next definition was proposed by R.\,Rouquier in \cite{Rou}
\begin{definition}
Dimension of a triangulated category $\TT$ is the minimal number $n$ such that there exists an object $G\in\TT$ with $\TT=\langle G\rangle_n$. 
\end{definition}
We call the above dimension \emph{Rouquier dimension} and denote it by $\Rdim(\TT)$. 
\begin{definition}
If $A$ is a ring we write $\Rdim(A)$ for $\Rdim(\Perf A)$.
\end{definition}

Clearly, Rouquier dimension is derived Morita-invariant: if algebras $A$ and $B$ are derived Morita-equivalent then $\Rdim(A)=\Rdim(B)$. 

Here are some other properties of Rouquier dimension.

%Next important fact is missing in the original paper \cite{Rou} but is well-known
%\begin{prop}
%Let $A$ be a noetherian ring of finite global dimension $d$. Then $D^b(\modd A)=\langle %A\rangle_d$. In particular, $\Rdim(A)\le d$.
%\end{prop}

\begin{prop}[See {\cite[Prop. 7.16]{Rou}}]
\label{prop_rdimgeom}
Let $X$ be a reduced separated scheme of finite type over a field $\k$. Then $\Rdim (D^b(\coh X))\ge \dim X$. 
%In particular, let $A$ be a finitely generated integral domain over a field $\k$. Then %$\Rdim(A)\ge \mathrm{Krdim}(A)$, the Krull dimension of $A$.
\end{prop}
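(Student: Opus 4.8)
The plan is to reduce to the case of a regular local ring and then extract a lower bound from the Ghost Lemma. First I would localize: choose an affine open $U\subseteq X$ meeting an irreducible component of dimension $d=\dim X$, and a closed point $x\in U$ lying in the regular locus of $X$ on that component. Such a point exists because the regular locus of a reduced finite-type $\k$-scheme is open and dense (at the generic point of the component the local ring is a field), so it meets the component in a dense open which contains closed points; since $X$ is universally catenary, $\O_{X,x}$ is then a regular local ring of Krull dimension exactly $d$ (it is a domain, so only one component passes through $x$). Now the restriction functor $D^b(\coh X)\to D^b(\coh U)$ and the localization $D^b(\coh U)\to D^b(\modd \O_{X,x})$ are exact and essentially surjective (coherent sheaves extend from an open subscheme, and finitely generated $\O_{X,x}$-modules extend to a neighbourhood). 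An exact essentially surjective functor $F$ sends $\langle G\rangle_n$ into $\langle F(G)\rangle_n$ and is onto, hence cannot increase Rouquier dimension; therefore it suffices to prove $\Rdim(D^b(\modd S))\ge d$ for $S$ a regular local ring of dimension $d$, where $D^b(\modd S)=\Perf S$.

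For the regular local ring $S$, write $k$ for the residue field and $\mathfrak m=(t_1,\dots,t_d)$ for the maximal ideal, generated by a regular system of parameters. Suppose $\Perf S=\langle G\rangle_n$; I want $n\ge d$. The tool is the Ghost Lemma: if an object lies in $\langle G\rangle_n$, then any composition of $n+1$ consecutive $G$-ghost morphisms out of it vanishes (a morphism $f$ is $G$-ghost if $\Hom(G,f[j])=0$ for all $j$). So it is enough to produce, for the given generator $G$, a chain of $d$ composable $G$-ghost morphisms whose total composite is nonzero. The natural candidate is built from the Koszul resolution $K^\bullet=K(t_1,\dots,t_d)$ of $k$, with its filtration $S=K_0,K_1,\dots,K_d\simeq k$ where $K_i=\mathrm{Cone}(K_{i-1}\xrightarrow{\,t_i\,}K_{i-1})$: the structure and connecting morphisms of these triangles assemble into a length-$d$ chain whose composite realises a generator of $\Ext^d_S(k,k)=\bigwedge^d(\mathfrak m/\mathfrak m^2)^{*}$, which is nonzero precisely because $\dim S=d$. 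Feeding this chain into the Ghost Lemma gives $n+1>d$, i.e. $n\ge d$, as required.

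The hard part is exactly the verification that the chosen morphisms are $G$-ghost for an \emph{arbitrary} classical generator $G$, not merely for $G=S$ (for which the morphisms are ghosts for trivial reasons) or $G=k$ (for which the naive choice fails). This is where one must genuinely use that $G$ generates the whole category, so that $G$ cannot be supported on a proper closed subscheme: after $i$ steps of the Koszul chain the relevant $\Hom$-groups become supported in codimension $\ge i$, and generation forces the induced maps to be detected only through the top-degree $\Ext$. Making this support/degree bookkeeping precise — equivalently, deforming the Koszul chain into one functorially attached to $G$ — is the technical heart; everything else (the reduction step, the computation of $\Ext^\bullet_S(k,k)$, and the application of the Ghost Lemma) is routine. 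As a fallback I would pursue in parallel the alternative of proving $\Rdim(\Perf S)\ge d$ by a free-resolution-length argument for the generator $S$ together with a transfer principle comparing the generation times of two generators of $\Perf S$, but arranging that the transfer loses nothing seems no easier, so I expect the Ghost-Lemma route above to be the cleanest.
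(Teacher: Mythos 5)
The reduction step is sound: the derived localization $D^b(\coh X)\to D^b(\modd\O_{X,x})$ at a regular closed point $x$ of a top-dimensional component is exact and essentially surjective (a finitely generated $\O_{X,x}$-module spreads out to a coherent sheaf near $x$ and then extends to $X$), so Rouquier dimension cannot increase, and for such $x$ one does get $\O_{X,x}$ regular local of Krull dimension $\dim X$. The Ghost Lemma is also the right tool. The genuine gap is exactly the one you flag yourself: you never establish that the Koszul connecting maps $\delta_i\colon K_i\to K_{i-1}[1]$ are $G$-ghost for an arbitrary classical generator $G$, and in fact they are not. Already for $d=1$ (so $S$ a DVR with residue field $k$) take $G=S\oplus k$, which is a classical generator of $\Perf S$. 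The Koszul chain is a single map $\delta_1\colon k\to S[1]$ classifying the extension $0\to S\xra{t}S\to k\to 0$, and the induced map $\Hom^0(k,k)\to\Hom^0(k,S[1])=\Ext^1_S(k,S)$ sends $\mathrm{id}_k$ to $\delta_1\neq 0$. So $\delta_1$ is not $G$-ghost; the same failure occurs whenever $G$ has a summand supported at the closed point, and such summands cannot be excluded. Thus the fixed Koszul chain must be replaced by one depending on $G$, and the sentence about ``support/degree bookkeeping'' is a hope, not a construction; it is precisely where all the work lies.

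Two smaller remarks. First, the composite of the chain $k=K_d\to K_{d-1}[1]\to\cdots\to K_0[d]=S[d]$ is an element of $\Ext^d_S(k,S)$, not $\Ext^d_S(k,k)$; this is only a slip since $\Ext^d_S(k,S)\cong k$ as well and the composite is indeed a generator. Second, and more substantively: Rouquier's own argument does not pin its ghosts to a single Koszul chain but builds the ghost morphisms out of multiplication by elements chosen generically with respect to $G$ (so as to be non-zerodivisors on the relevant finitely generated $\Hom$-modules), which is how the ghostness condition is actually forced; without supplying such a $G$-adapted choice, the present write-up proves the easy inequality only in cases where the naive chain happens to be ghost, and leaves the statement open for a general generator.
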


%\begin{prop}[See {\cite[Prop 7.37]{Rou}}]
%\label{predl_loewy}
%Let $A$ be a finite-dimensional  algebra. Suppose $R(A)$ is its radical and %$R(A)^{d+1}=0$ for some $d$. Then we have
%$$D^b(\modd A)=\langle A/R(A)\rangle_{d}.$$
%In particular, $\Rdim(A)\le d$.
%\end{prop}

%\begin{prop}[See {\cite[Ex. 1.2]{ELregular}}]
%Let $A$ be a noetherian algebra. Then $\Rdim(\Perf(A))<\infty$ if and only if $A$ has %finite global dimension.
%\end{prop}

\begin{prop}[See {\cite[Section 3.2]{EL}}]
\label{prop_rdimsubcat}
Let $\TT'\subset \TT$ be an admissible triangulated subcategory. Then $\Rdim(\TT')\le \Rdim(\TT)$.
In particular, if $A,B$ are algebras and $A$ is a derived semi-orthogonal subalgebra in $B$ then $\Rdim(A)\le \Rdim(B)$.
\end{prop}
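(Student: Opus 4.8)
The plan is to exploit the projection functor attached to an admissible subcategory together with the fact that any triangulated functor is compatible with the generation filtrations $[-]_i$ and $\langle-\rangle_i$.

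First I would set up notation. Since $\TT'\subset\TT$ is admissible, the inclusion $i\colon\TT'\hookrightarrow\TT$ admits a right adjoint $\pi\colon\TT\to\TT'$, which is a triangulated functor; moreover, $i$ being fully faithful, the adjunction natural transformation gives $\pi\circ i\cong\id_{\TT'}$, so in particular $\pi(X)\cong X$ for every $X\in\TT'$.

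Next I would record the elementary fact that for any triangulated functor $\pi$ and any object $G$ one has $\pi([G]_k)\subset[\pi(G)]_k$ and $\pi(\langle G\rangle_k)\subset\langle\pi(G)\rangle_k$, by induction on $k$. For $k=0$ this is immediate because $\pi$ is additive and commutes with shifts. For the inductive step, if $F\in[G]_k$ fits into a triangle $F_0\to F\to F_{k-1}\to F_0[1]$ with $F_0\in[G]_0$ and $F_{k-1}\in[G]_{k-1}$, then applying $\pi$ produces a triangle $\pi(F_0)\to\pi(F)\to\pi(F_{k-1})\to\pi(F_0)[1]$ with $\pi(F_0)\in[\pi(G)]_0$ and, by the inductive hypothesis, $\pi(F_{k-1})\in[\pi(G)]_{k-1}$, so $\pi(F)\in[\pi(G)]_k$. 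Passing to idempotent closures, a direct summand of an object of $[G]_k$ is sent to a direct summand of an object of $[\pi(G)]_k$, hence $\pi(\langle G\rangle_k)\subset\langle\pi(G)\rangle_k$. This last bookkeeping with the idempotent completions is the only point requiring any care, and it is the step I would single out as the (mild) obstacle.

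Finally I would conclude. Let $n=\Rdim(\TT)$ and pick $G\in\TT$ with $\TT=\langle G\rangle_n$. For every $X\in\TT'\subset\TT=\langle G\rangle_n$ we get $X\cong\pi(X)\in\pi(\langle G\rangle_n)\subset\langle\pi(G)\rangle_n$, so $\TT'=\langle\pi(G)\rangle_n$ and $\Rdim(\TT')\le n=\Rdim(\TT)$. For the last assertion, if $A$ is a derived semi-orthogonal subalgebra of $B$ then, by Definition~\ref{def_dsoc}, $\Perf A$ is $\k$-linearly equivalent to an admissible subcategory of $\Perf B$, so $\Rdim(A)=\Rdim(\Perf A)\le\Rdim(\Perf B)=\Rdim(B)$ by the case just treated together with the derived Morita invariance of Rouquier dimension.
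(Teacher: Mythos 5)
Your proof is correct and follows the standard argument (the one in Rouquier's paper and in~\cite{EL}): use the projection $\pi$ right adjoint to the inclusion, observe that any triangulated functor sends $\langle G\rangle_k$ into $\langle\pi(G)\rangle_k$, and conclude via $\pi\circ i\cong\id_{\TT'}$. The paper does not reproduce a proof but simply cites~\cite{EL}, and your argument is precisely what is meant there; the only thing I would phrase slightly differently is the very last step, where you need not derived Morita invariance in full but only the evident fact that a triangulated equivalence preserves Rouquier dimension.
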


\subsection{Diagonal dimension}

In \cite{EL} the diagonal dimension of an enhanced triangulated category $\TT$ is defined as follows. Suppose $\TT\cong \Perf A$ where $A$ is a dg algebra over~$\k$.  Consider the bifunctor
$$\boxtimes\colon \Perf A^{\op} \times \Perf A \to \Perf A^{\op}\otimes_{\k} A.$$
\begin{definition}
\label{def_ddim}
Let $A$ be a dg $\k$-algebra (or just a $\k$-algebra). Diagonal dimension of $\Perf A $ is the minimal integer $n$ such that for some objects $F\in \Perf A^{\op}$, $G\in \Perf A$ the diagonal $A^{\op}\otimes_{\k} A$-bimodule $A$ belongs to $\langle F\boxtimes G\rangle_n$. We denote diagonal dimension of $\Perf A$ by $\Ddim(A)$ .
\end{definition}

\begin{remark}
It follows from Proposition 4.3 in  \cite{EL} and Theorem 3.2 in \cite{Ke2} that diagonal dimension is derived Morita-invariant for ordinary $\k$-algebras $A$ and $B$: 
if there is an equivalence $\Perf A\cong \Perf B$ then $\Ddim(A)=\Ddim(B)$.

Also note that $\Ddim(A)$ is finite if and only if $A$ is a smooth $\k$-algebra, i.e., if $A\in\Perf(A^{\op}\otimes_{\k} A)$.
\end{remark}

Below we list some properties of diagonal dimension.

\begin{prop}[See {\cite[Prop. 4.10]{EL}}]
\label{prop_rdimddim}
Let $A$ be a $\k$-algebra. Then $\Rdim(A)\le \Ddim(A)$.
\end{prop}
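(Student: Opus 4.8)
The statement to prove is Proposition~\ref{prop_rdimddim}: for a $\k$-algebra $A$, one has $\Rdim(A)\le \Ddim(A)$.

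\medskip

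The plan is to use the standard ``diagonal'' argument: the diagonal bimodule $A$, viewed as a kernel, induces the identity functor on $\Perf A$, and a resolution of $A$ by a complex built from objects of the form $F\boxtimes G$ therefore produces a built-up resolution of every object of $\Perf A$ by copies of $G$. First I would set $n=\Ddim(A)$, assumed finite (otherwise there is nothing to prove), and pick $F\in\Perf A^{\op}$, $G\in\Perf A$ such that the diagonal $A^{\op}\otimes_\k A$-bimodule $A$ lies in $\langle F\boxtimes G\rangle_n$. The key point is that for any object $X\in\Perf A$, applying the convolution functor $-\otim_{A^{\op}}^{\mathbb L}X$ (equivalently, viewing $M\in D(A^{\op}\otimes_\k A)$ as a kernel $\Phi_M$ and evaluating $\Phi_A(X)\cong X$) sends $F\boxtimes G$ to $\Hom_A^{\bul}$-type data — concretely, $(F\boxtimes G)\otimes_{A^{\op}}^{\mathbb L} X \cong (F\otimes_{A^{\op}}^{\mathbb L} X)\otimes_\k G$ up to the appropriate interpretation, which is a direct sum of shifts of $G$ indexed by a basis of the (perfect, hence bounded with finite-dimensional cohomology only if $A$ is finite-dimensional — but in general perfect over $\k$) complex $F\otimes_{A^{\op}}^{\mathbb L} X$. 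Thus this functor sends $[F\boxtimes G]_0$ into $[G]_0$.

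\medskip

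Next I would check that this functor is triangulated and commutes with taking cones and direct summands, so it sends $\langle F\boxtimes G\rangle_k$ into $\langle G\rangle_k$ for every $k$; in particular it sends the diagonal bimodule $A\in\langle F\boxtimes G\rangle_n$ to $\Phi_A(X)\cong X\in\langle G\rangle_n$. Since $X\in\Perf A$ was arbitrary, $\Perf A=\langle G\rangle_n$, which gives $\Rdim(A)\le n=\Ddim(A)$. One subtlety is ensuring that $G$ is genuinely an object of $\Perf A$ and that $F\otimes_{A^{\op}}^{\mathbb L}X$ is a perfect complex of $\k$-vector spaces so that $[F\boxtimes G]_0$ really lands in \emph{finite} direct sums of shifts of $G$; this holds because $F,X$ are perfect and tensoring perfect complexes over $A^{\op}$ yields a perfect complex of $\k$-modules.

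\medskip

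The main obstacle is the bookkeeping identifying $(F\boxtimes G)\otimes_{A^{\op}}^{\mathbb L}X$ with a finite sum of shifts of $G$ and verifying that the assignment $X\mapsto (F\boxtimes G)\otimes_{A^{\op}}^{\mathbb L}X$ really defines an exact functor $\Perf A\to\Perf A$ with the properties above — i.e. making precise the ``kernel/convolution'' formalism for the outer tensor product $\boxtimes$ and checking compatibility with the filtration by the $\langle-\rangle_k$. Since this proposition is quoted from \cite[Prop. 4.10]{EL} and the formalism is set up there, in the present paper I would simply invoke that reference; but the argument above is the content of the proof.
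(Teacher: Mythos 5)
Your proposal is correct and reproduces the standard convolution/kernel argument. The present paper gives no proof, deferring entirely to \cite[Prop. 4.10]{EL}, and your reconstruction (for any $X\in\Perf A$, the triangulated functor $M\mapsto X\otimes_A^{\mathbb{L}}M$ sends $F\boxtimes G$ to a finite sum of shifts of $G$ and sends the diagonal to $X$, so $\langle F\boxtimes G\rangle_n\ni A$ forces $X\in\langle G\rangle_n$) is exactly the content of that reference.
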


\begin{prop}[See {\cite[Prop. 4.8]{EL}}]
\label{prop_ddimmult}
Let $A$ and $B$ be $\k$-algebras. Then
$$\Ddim  (A\otimes_\k B)\le \Ddim (A))+\Ddim (B).$$
\end{prop}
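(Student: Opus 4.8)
The plan is to reduce the inequality to a general subadditivity statement for the subcategories $\langle-\rangle_\bullet$ under external tensor products, and then to prove that statement by passing to one-sided twisted complexes.

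\emph{Reduction.} We may assume $\Ddim(A)=n$ and $\Ddim(B)=m$ are both finite, otherwise there is nothing to prove. Choose $F\in\Perf A^{\op}$, $G\in\Perf A$ with the diagonal bimodule $A\in\langle F\boxtimes G\rangle_n$ inside $\Perf(A^{\op}\otimes_\k A)$, and $F'\in\Perf B^{\op}$, $G'\in\Perf B$ with $B\in\langle F'\boxtimes G'\rangle_m$ inside $\Perf(B^{\op}\otimes_\k B)$. The canonical isomorphism of $\k$-algebras that swaps the two middle factors,
$$(A\otimes_\k B)^{\op}\otimes_\k(A\otimes_\k B)\ \cong\ (A^{\op}\otimes_\k A)\otimes_\k(B^{\op}\otimes_\k B),$$
carries the diagonal $(A\otimes_\k B)$-bimodule $A\otimes_\k B$ to the external tensor product of the diagonal $A$-bimodule with the diagonal $B$-bimodule, and carries $(F\otimes_\k F')\boxtimes(G\otimes_\k G')$ to $(F\boxtimes G)\otimes_\k(F'\boxtimes G')$. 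Since $F\otimes_\k F'\in\Perf((A\otimes_\k B)^{\op})$ and $G\otimes_\k G'\in\Perf(A\otimes_\k B)$, the object $(F\otimes_\k F')\boxtimes(G\otimes_\k G')$ is an admissible test object for $\Ddim(A\otimes_\k B)$. So it is enough to show that the external tensor product functor $\Perf(A^{\op}\otimes_\k A)\times\Perf(B^{\op}\otimes_\k B)\to\Perf((A^{\op}\otimes_\k A)\otimes_\k(B^{\op}\otimes_\k B))$ sends $\langle F\boxtimes G\rangle_n\times\langle F'\boxtimes G'\rangle_m$ into $\langle(F\boxtimes G)\otimes_\k(F'\boxtimes G')\rangle_{n+m}$.

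\emph{Key Lemma and conclusion.} More generally, I would prove: if $\mathcal C,\mathcal D$ are $\k$-linear triangulated categories with dg enhancements and $\otimes_\k\colon\mathcal C\times\mathcal D\to\mathcal E$ is a functor exact in each argument, then $X\in\langle P\rangle_n$ and $Y\in\langle Q\rangle_m$ imply $X\otimes_\k Y\in\langle P\otimes_\k Q\rangle_{n+m}$. To prove this, use the enhancements to replace the iterated-triangle description of $\langle P\rangle_n$ by the equivalent one in terms of one-sided twisted complexes (Postnikov towers): up to a direct summand, $X$ is the convolution of a twisted complex $(P_0,\dots,P_n)$ with each $P_i$ a finite direct sum of shifts of $P$, and likewise $Y$ is the convolution of $(Q_0,\dots,Q_m)$. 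Then $X\otimes_\k Y$ is, up to a summand, the convolution of the tensor-product twisted complex $(P_i\otimes_\k Q_j)_{0\le i\le n,\ 0\le j\le m}$; regrouping its terms by the total index $k=i+j$, which runs over $\{0,1,\dots,n+m\}$, exhibits it as the convolution of a one-sided twisted complex of length $n+m$ whose $k$-th term $\bigoplus_{i+j=k}P_i\otimes_\k Q_j$ is again a finite direct sum of shifts of $P\otimes_\k Q$. Hence $X\otimes_\k Y\in\langle P\otimes_\k Q\rangle_{n+m}$. Combined with the Reduction, the diagonal $(A\otimes_\k B)$-bimodule lies in $\langle(F\otimes_\k F')\boxtimes(G\otimes_\k G')\rangle_{n+m}$, so $\Ddim(A\otimes_\k B)\le n+m=\Ddim(A)+\Ddim(B)$.

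\emph{Expected main obstacle.} All the content sits in the Key Lemma, and the pitfall there is that the obvious inductive strategy — peeling a single triangle off $X$ and tensoring it with the whole of $Y$ — costs about $m$ steps per layer and therefore only delivers the multiplicative bound $\langle P\otimes_\k Q\rangle_{(n+1)(m+1)-1}$. To obtain the additive bound one must "peel diagonally", i.e.\ convolve the bicomplex $(P_i\otimes_\k Q_j)$ all at once; this is exactly why it is essential to work in the twisted-complex model, which is available here because every category in sight is $\Perf$ of a $\k$-algebra. Making the manipulations of one-sided twisted complexes and their convolutions precise (and checking that the convolution of a tensor product of twisted complexes has length the sum of the lengths) is the one step that requires genuine care.
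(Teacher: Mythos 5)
Your proof is correct, and the structure — reduce via the middle-factor swap isomorphism to a subadditivity statement for generation time under external tensor product, then prove that statement by lifting the iterated-cone description to a one-sided twisted complex and re-grading the product bicomplex by total degree — is exactly the argument one needs for \cite[Prop.\ 4.8]{EL}. Your diagnosis of the pitfall is the key insight: peeling one triangle at a time and tensoring with the whole of the other factor only yields the multiplicative bound $(n+1)(m+1)-1$, and the passage to an enhancement is what lets you convolve the whole $(n+1)\times(m+1)$ array diagonally to get $n+m+1$ layers. One remark: in the concrete situation at hand you can avoid the abstract twisted-complex formalism altogether. Since everything in sight is $\Perf$ of an honest $\k$-algebra, any object of $[F\boxtimes G]_n$ is quasi-isomorphic to a bounded complex of bimodules carrying an actual subcomplex filtration $0=C_{-1}\subset C_0\subset\dots\subset C_n=C$ with each $C_i/C_{i-1}$ a finite sum of shifts of $F\boxtimes G$ (build the filtration inductively from the cones, using that the cone of a chain map contains the target as a subcomplex with the shifted source as quotient). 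Tensoring two such filtered complexes over the field $\k$ (which is exact) and coarsening the product bifiltration to the total-degree filtration gives a filtration of length $n+m$ with graded pieces the anti-diagonal sums you wrote down; this is your Key Lemma in less machinery, and it is exactly what makes the additive bound go through.
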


\begin{prop}[See {\cite[Prop. 4.12]{EL}}]
\label{prop_ddimsubcat}
Let $A$ and $B$ be $\k$-algebras such that $A$ is a derived semi-orthogonal subalgebra of $B$ (Definition~\ref{def_dsoc}). 
Then $\Ddim(A)\le \Ddim(B)$.
\end{prop}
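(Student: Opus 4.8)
The statement is: if $A$ is a derived semi-orthogonal subalgebra of $B$, then $\Ddim(A)\le\Ddim(B)$. The plan is to transfer a generating datum for the diagonal bimodule of $B$ to one for the diagonal bimodule of $A$ via the projection/inclusion functors of the semi-orthogonal decomposition, keeping track of generation times. First I would fix notation: by Definition~\ref{def_dsoc} there is a fully faithful $\k$-linear functor $i_*\colon\Perf A\hookrightarrow\Perf B$ whose essential image is admissible, so there are adjoints $i^*$ (left adjoint) and $i^!$ (right adjoint) with $i^*i_*\cong\id\cong i^!i_*$. Since everything is enhanced, these functors are realized by bimodule kernels: $i_*$ is given by an $A^{\op}\otimes_\k B$-bimodule $N$ (so $i_*(-)=-\otimes^{\mathbf L}_A N$), and $i^!$ by a $B^{\op}\otimes_\k A$-bimodule $M$, with the adjunction/fully-faithfulness translating into $N\otimes^{\mathbf L}_B M\cong A$ as $A$-bimodules.

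Now suppose $\Ddim(B)=n$, witnessed by objects $F\in\Perf B^{\op}$, $G\in\Perf B$ with the diagonal bimodule $B\in\langle F\boxtimes G\rangle_n$ inside $\Perf(B^{\op}\otimes_\k B)$. The key step is to apply the exact functor
$$\Phi\colon \Perf(B^{\op}\otimes_\k B)\longrightarrow \Perf(A^{\op}\otimes_\k A),\qquad \Phi(-)=M\otimes^{\mathbf L}_{B}(-)\otimes^{\mathbf L}_B N,$$
i.e. act on the left by the kernel of $i^!$ and on the right by the kernel of $i_*$. (One must check $\Phi$ indeed lands in $\Perf(A^{\op}\otimes_\k A)$; this uses that $A$ has finite global dimension, hence is smooth, or more directly that $M, N$ are perfect over the relevant algebras because $i_*, i^!$ preserve perfect complexes — admissibility of the image gives this.) A triangulated functor sends $\langle X\rangle_n$ into $\langle\Phi(X)\rangle_n$, so $\Phi(B)\in\langle\Phi(F\boxtimes G)\rangle_n$. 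On the one hand $\Phi(B)=M\otimes^{\mathbf L}_B B\otimes^{\mathbf L}_B N=M\otimes^{\mathbf L}_B N\cong A$, the diagonal $A$-bimodule; on the other hand $\Phi(F\boxtimes G)=(M\otimes^{\mathbf L}_B F)\boxtimes(G\otimes^{\mathbf L}_B N)$, wait — more carefully, $F\boxtimes G$ as a $B$-bimodule is $F\otimes_\k G$ with the outer actions, and tensoring by $M$ on the left / $N$ on the right decouples into $(M\otimes^{\mathbf L}_B F)$, an object of $\Perf A^{\op}$ (i.e. a right $A$-module up to the op-convention), boxed with $(G\otimes^{\mathbf L}_B N)\in\Perf A$. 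Hence the diagonal $A$-bimodule lies in $\langle F'\boxtimes G'\rangle_n$ for $F':=M\otimes^{\mathbf L}_B F$, $G':=G\otimes^{\mathbf L}_B N$, giving $\Ddim(A)\le n=\Ddim(B)$.

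The main obstacle, and the step deserving the most care, is the bookkeeping that identifies $\Phi(F\boxtimes G)$ with an object of the form $F'\boxtimes G'$ — i.e. that acting by a left kernel and a right kernel on an external product $F\boxtimes G$ again yields an external product — together with verifying that $F'\in\Perf A^{\op}$ and $G'\in\Perf A$ (perfectness is where one needs $i_*$ and $i^!$ to restrict to perfect complexes, which in turn is where admissibility of the subcategory, rather than mere full faithfulness, is essential). A clean way to organize this is to note $\Phi=\Psi^{\op}\otimes\Psi'$ where $\Psi\colon\Perf B\to\Perf A$ and its "op" version are the projection functors of the semi-orthogonal decomposition; external products are natural with respect to such factorizations, and $\Psi(B)\cong A$, $\Psi^{\op}(B)\cong A$ recovers the diagonal. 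If one prefers to avoid the explicit kernel formalism, an alternative is to cite directly that the projection functor $\Perf B\to\Perf A$ of an admissible embedding is a quotient functor realized by a perfect bimodule and invoke the compatibility of $\boxtimes$ with such functors as established in \cite{EL}; but the kernel computation above is self-contained modulo the standard enhancement facts already quoted in Section~\ref{section_prelim}.
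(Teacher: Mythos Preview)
The paper itself gives no proof of this proposition; it is stated with a bare citation to \cite[Prop.~4.12]{EL}. Your outline is the standard bimodule argument and is essentially what one finds there: realize the embedding and an adjoint by bimodules, sandwich the diagonal $B$-bimodule between them, and observe that external products are preserved.

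There is, however, a systematic left/right transposition in your formulas that you should fix. With your declared conventions ($N$ an $A^{\op}\otimes_\k B$-module, hence left-$A$/right-$B$; $M$ a $B^{\op}\otimes_\k A$-module, hence left-$B$/right-$A$), the expression $\Phi(-)=M\otimes^{L}_B(-)\otimes^{L}_B N$ is ill-typed: $M$ carries no right $B$-action, so $M\otimes^{L}_B X$ is undefined for a $B$-bimodule $X$. The correct sandwich is
\[
\Phi(X)=N\otimes^{L}_B X\otimes^{L}_B M,
\]
which gives $\Phi(B)=N\otimes^{L}_B M\cong A$ --- exactly the bimodule identity you (correctly) extracted from $i^! i_*\cong\id$. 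Your later line ``$\Phi(B)=M\otimes^{L}_B N\cong A$'' is a different statement and is not what full faithfulness provides. With the swap corrected one gets $\Phi(F\boxtimes G)=(N\otimes^{L}_B F)\boxtimes(G\otimes^{L}_B M)$.

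You have rightly isolated the substantive point: one must check $G\otimes^{L}_B M=i^!(G)\in\Perf A$ (immediate) and $N\otimes^{L}_B F\in\Perf A^{\op}$, the latter reducing to $N$ being perfect as a left $A$-module. This is precisely where admissibility (not mere full faithfulness) enters: the existence of the \emph{left} adjoint $i^*$ on $\Perf$, say $i^*(-)=(-)\otimes^L_B L$ with $L=i^*(B)\in\Perf A$, forces $N\cong R\Hom_A(L,A)$ as $A$--$B$-bimodules, whence $N\in\Perf A^{\op}$. So your diagnosis of the obstacle is correct; just be careful that it is the \emph{left} adjoint, not $i^!$, that delivers left-perfectness of $N$.
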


\begin{prop}
\label{prop_ddimexcoll}
Let $A$ be a finite-dimensional basic $\k$-algebra of finite global dimension. Assume that $\Perf A$ has a full exceptional collection consisting of $n+1$ blocks:
$$\Perf A=\left\langle \begin{matrix}E_{0,1}, & E_{1,1}, & \ldots, & E_{n,1}\\
 & & \vdots & \\
E_{0,d_0}, & E_{1,d_1}, & \ldots, & E_{n,d_n}
\end{matrix}
\right\rangle.$$
%where objects $E_{m,1}, E_{m,2},\ldots,E_{m,k_m}$ in each block are pairwise orthogonal.
Then 
$$\Ddim(A)\le n.$$
\end{prop}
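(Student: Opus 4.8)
The plan is to realise the diagonal $A^{\op}\otimes_\k A$-bimodule $A$ as an iterated extension of $n+1$ objects, each of which is a direct summand of one fixed outer product $F\boxtimes G$; this yields $A\in\langle F\boxtimes G\rangle_n$, hence $\Ddim(A)\le n$ by Definition~\ref{def_ddim}. Since $A$ is basic of finite global dimension it is smooth, so the diagonal bimodule indeed lies in $\Perf(A^{\op}\otimes_\k A)$. Grouping the one-object subcollections $\langle E_{k,j}\rangle$ of the given full exceptional collection block by block produces a semi-orthogonal decomposition into admissible subcategories
$$\Perf A=\langle\,\langle\mathcal B_0\rangle,\ \langle\mathcal B_1\rangle,\ \ldots,\ \langle\mathcal B_n\rangle\,\rangle,\qquad \langle\mathcal B_k\rangle:=\langle E_{k,1},\ldots,E_{k,d_k}\rangle .$$
Because $\mathcal B_k$ is a \emph{block}, $\bigoplus_j E_{k,j}$ is a tilting generator of $\langle\mathcal B_k\rangle$ with endomorphism algebra $\k\times\cdots\times\k$ ($d_k$ copies), so Proposition~\ref{prop_tilting} gives an equivalence $\langle\mathcal B_k\rangle\simeq\Perf(\k^{d_k})=\prod_{j=1}^{d_k}\Perf\k$.

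Next I would pass to Fourier--Mukai kernels. Working in the dg enhancement, the semi-orthogonal decomposition above induces a Postnikov tower of the diagonal bimodule,
$$0=K_{-1}\to K_0\to K_1\to\cdots\to K_n=A\qquad\text{in }\ \Perf(A^{\op}\otimes_\k A),$$
whose $k$-th graded piece $T_k:=\mathrm{Cone}(K_{k-1}\to K_k)$ is the kernel of the $k$-th graded-piece endofunctor of $\id_{\Perf A}$ attached to this decomposition. That endofunctor factors through the equivalence $\langle\mathcal B_k\rangle\simeq\prod_j\Perf\k$ followed by $(V_j)_j\mapsto\bigoplus_j V_j\otimes_\k E_{k,j}$, so its kernel is a \emph{finite direct sum}
$$T_k\cong\bigoplus_{j=1}^{d_k}F_{k,j}\boxtimes E_{k,j},\qquad F_{k,j}\in\Perf A^{\op},$$
where $F_{k,j}$ is the kernel of the corresponding functor $\Perf A\to\Perf\k$. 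This splitting is the whole point of the block hypothesis: the vanishing $\Hom^\bullet(E_{k,i},E_{k,j})=0$ for $i\ne j$ is exactly what makes $\langle\mathcal B_k\rangle$ equivalent to a product of copies of $\Perf\k$, so that a whole block contributes a single split term $T_k$ in place of an iterated cone of length $d_k$.

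To conclude, put $F:=\bigoplus_{k,j}F_{k,j}\in\Perf A^{\op}$ and $G:=\bigoplus_{k,j}E_{k,j}\in\Perf A$. Then $F\boxtimes G=\bigoplus_{(k,j),(l,i)}F_{k,j}\boxtimes E_{l,i}$, so each $T_k$ is a finite direct sum of direct summands of $F\boxtimes G$ and therefore lies in $\langle F\boxtimes G\rangle_0$. Climbing the Postnikov tower via the triangles $K_{k-1}\to K_k\to T_k$ gives $K_k\in\langle F\boxtimes G\rangle_k$ for each $k$; in particular $A=K_n\in\langle F\boxtimes G\rangle_n$ and $\Ddim(A)\le n$.

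The step I expect to require the most care is the passage to kernels: one has to know that a semi-orthogonal decomposition of $\Perf A$ into admissible subcategories is realised by a filtration of the diagonal $A^{\op}\otimes_\k A$-bimodule whose graded pieces are the kernels of the graded-piece endofunctors, and that, for a block, this kernel has the split outer-product form above. Both are standard for smooth and proper enhanced triangulated categories, and once the tower is in place the identification of the dual objects $F_{k,j}$ is only bookkeeping.
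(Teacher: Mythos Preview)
Your argument is correct. The paper's own proof is a one-line reference to the companion paper, namely ``Follows from \cite[Prop.~4.13]{EL}, note that $A$ is smooth''; what you have written is essentially an unpacking of that cited proposition. The mechanism is the same: the semi-orthogonal decomposition into $n+1$ blocks yields a length-$n$ filtration of the diagonal bimodule in $\Perf(A^{\op}\otimes_\k A)$, and the block hypothesis forces each graded piece to split as a finite direct sum $\bigoplus_j F_{k,j}\boxtimes E_{k,j}$ rather than an iterated cone, so a single object $F\boxtimes G$ generates all graded pieces in zero steps. Your identification $F_{k,j}\in\Perf A^{\op}$ as the kernel of the scalar component $\Perf A\to\Perf\k$ is correct (concretely one may take $F_{k,j}=R\Hom_A(E_{k,j},A)$ up to twisting by the other blocks), and the smoothness of $A$---needed so that the diagonal is perfect and the kernel calculus applies---follows, as you say, from $A$ being basic of finite global dimension. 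The only point worth tightening is the one you already flag: that the filtration of the identity functor by projection endofunctors lifts to a filtration of the diagonal bimodule in the enhanced setting; this is exactly the content outsourced to \cite{EL}.
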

\begin{proof}
Follows from \cite[Prop. 4.13]{EL}, note that $A$ is smooth.
\end{proof}

%Also we have got an analogue of Proposition~\ref{prop_ddimexcoll}.
\begin{prop}
\label{prop_ddimradical}
Let $A$ be a finite-dimensional $\k$-algebra with the radical $R$. Suppose the algebra $A/R$ is separable over $\k$ (for example, this holds if $\k$ is perfect or if $A$ is basic). Suppose that $R^{d+1}=0$. Then for the $A^{\op}\otimes_k A$-module $A$ one has 
$$A\in \langle (A/R)\otimes_\k (A/R)\rangle_{d}\subset D(A^{\op}\otimes_\k A).$$
In particular, if moreover $A$ has finite global dimension then $\Ddim(A)\le d$.
\end{prop}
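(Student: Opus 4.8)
The plan is to build an explicit resolution of the diagonal bimodule $A$ by objects generated by $(A/R)\otimes_\k(A/R)$, using the radical filtration of $A$ as a bimodule. Set $\bar A:=A/R$ and consider the two-sided ideals $R\supset R^2\supset\cdots\supset R^{d+1}=0$. Each subquotient $R^i/R^{i+1}$ is an $A^{\op}\otimes_\k A$-module annihilated by $R$ on both sides, hence is really a module over $\bar A^{\op}\otimes_\k\bar A$. The separability hypothesis is exactly what makes $\bar A$ a separable $\k$-algebra, so $\bar A^{\op}\otimes_\k\bar A$ is semisimple; therefore every $R^i/R^{i+1}$ is a direct summand of a finite direct sum of copies of $\bar A\otimes_\k\bar A$ (the regular bimodule), and in particular $R^i/R^{i+1}\in\langle\bar A\otimes_\k\bar A\rangle_0$ inside $D(A^{\op}\otimes_\k A)$.

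Next I would assemble these pieces. From the short exact sequences of $A$-bimodules
\begin{equation*}
0\to R^{i+1}\to R^i\to R^i/R^{i+1}\to 0,\qquad i=0,1,\dots,d,
\end{equation*}
viewed as distinguished triangles in $D(A^{\op}\otimes_\k A)$, one reads off inductively (going down from $i=d$, where $R^d=R^d/R^{d+1}\in\langle\bar A\otimes_\k\bar A\rangle_0$) that $R^i\in\langle\bar A\otimes_\k\bar A\rangle_{d-i}$. Taking $i=0$ gives $A=R^0\in\langle\bar A\otimes_\k\bar A\rangle_d$, which is the first assertion. The induction step is just the defining property of the categories $\langle G\rangle_k$: a triangle with outer terms in $\langle G\rangle_a$ and $\langle G\rangle_b$ has middle term in $\langle G\rangle_{a+b}$ (and $\langle G\rangle_0$ is closed under direct summands, which covers the semisimplicity step).

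For the final clause, assume in addition $\gldim A<\infty$. Then $A$ is smooth, so $\Ddim(A)$ is finite; moreover $\bar A$, being semisimple, is smooth, and $\bar A\otimes_\k\bar A\in\Perf(\bar A^{\op}\otimes_\k\bar A)$. Using that $\bar A$ is an $A$-module of finite projective dimension on each side (because $\gldim A<\infty$), the bimodule $\bar A\otimes_\k\bar A$ lies in $\Perf(A^{\op}\otimes_\k A)$, and it is of the form $F\boxtimes G$ with $F=\bar A\in\Perf A^{\op}$, $G=\bar A\in\Perf A$. Hence $A\in\langle F\boxtimes G\rangle_d$ with $F\boxtimes G$ a perfect bimodule, which by Definition~\ref{def_ddim} gives $\Ddim(A)\le d$. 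The only mildly delicate point — the main obstacle — is justifying that $\bar A\otimes_\k\bar A$ is a \emph{perfect} $A^{\op}\otimes_\k A$-module rather than merely an object of $D(A^{\op}\otimes_\k A)$: this needs the finite global dimension of $A$ (to resolve $\bar A$ by finitely generated projectives over $A$ on each side) together with the fact that tensoring such one-sided projective resolutions over $\k$ yields a finite projective resolution over $A^{\op}\otimes_\k A$. Everything else is a routine dévissage along the radical filtration.
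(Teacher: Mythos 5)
Your proof is correct and follows essentially the same route as the paper's: you use the radical filtration $A\supset R\supset R^2\supset\cdots\supset R^{d+1}=0$, observe that each subquotient $R^i/R^{i+1}$ is a module over the semisimple algebra $(A/R)^{\op}\otimes_\k(A/R)$ (here separability is used), hence lies in $\langle (A/R)\otimes_\k(A/R)\rangle_0$, and then assemble via the $d$ connecting triangles; the last clause is handled identically, noting $A/R\in\Perf A$ and $A/R\in\Perf A^{\op}$ when $\gldim A<\infty$. One small slip: with the paper's convention ($\langle G\rangle_0$ already one layer), the gluing rule is that a triangle with outer terms in $\langle G\rangle_a$ and $\langle G\rangle_b$ has middle term in $\langle G\rangle_{a+b+1}$, not $\langle G\rangle_{a+b}$; you state the latter but then correctly compute $R^i\in\langle\cdot\rangle_{d-i}$ and land on $\langle\cdot\rangle_d$ (which is what the correct rule gives for $d+1$ layers), so the error is confined to the parenthetical wording and does not affect the argument.
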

\begin{proof}
Our assumptions imply that the algebra $(A/R)^{\op}\otimes_\k (A/R)$ is semi-simple, see for example \cite[Prop. 3.9]{FD}.
% and $(A/R)\otimes (A/R)\cong (A\otimes A)/R(A\otimes A)$. 

Consider the filtration of the bimodule $A$ 
$$0=R^{d+1}\subset R^d\subset R^{d-1}\subset\ldots\subset R^2\subset R\subset A.$$
For any quotient $M_i:=R^i/R^{i+1}$, $i=0,\ldots, d$, one has $R\cdot M_i=M_i\cdot R=0$. Hence $M_i$ is an $(A/R)^{\op}\otimes_\k (A/R)$-module. Since $(A/R)^{\op}\otimes_\k (A/R)$ is a semi-simple algebra, we have $M_i\in\langle (A/R)\otimes_\k (A/R)\rangle_0$. Now it follows that $A\in \langle (A/R)\otimes_\k (A/R)\rangle_{d}$. 

If $A$ has finite global dimension then $A/R\in\Perf A, \Perf A^{\op}$ and thus $\Ddim(A)\le d$ by Definition~\ref{def_ddim}.
\end{proof}

\medskip
\begin{lemma}[See {\cite[Prop. 4.15]{EL}}]
\label{lemma_ddim1}
Let $A$ be a finite-dimensional basic $\k$-algebra, then $\Ddim(A)\le \gldim(A)$.
\end{lemma}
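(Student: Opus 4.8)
The plan is to reduce the statement to a computation of the projective dimension of the diagonal bimodule. If $\gldim(A)=\infty$ there is nothing to prove, so put $d:=\gldim(A)<\infty$; then $A$ lies in $\Perf A$, and since the left and right global dimensions of a Noetherian algebra coincide, $A$ lies in $\Perf A^{\op}$ as well, so the object $A\boxtimes A\in\Perf(A^{\op}\otimes_\k A)$ is defined. The claim to prove is that the diagonal $A^{\op}\otimes_\k A$-module $A$ satisfies $A\in\langle A\boxtimes A\rangle_d$, which by Definition~\ref{def_ddim} (taking $F=A$, $G=A$) gives $\Ddim(A)\le d$.

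To establish this, the key step is to bound the projective dimension of $A$ over the enveloping algebra $A^e:=A^{\op}\otimes_\k A$ by $d$. The algebra $A^e$ is again finite-dimensional, so $A$ admits a minimal projective $A^e$-resolution $P_\bullet\to A$, whose length equals $\mathrm{pd}_{A^e}(A)$. Because $A$ is basic, $A^e$ decomposes as the direct sum of the bimodules $Ae_u\otimes_\k e_wA$ over $u,w\in\Gamma_0$, each of which is projective both as a right and as a left $A$-module; hence every term $P_n$ is a finite direct sum of copies of such bimodules. Applying the functor $-\otimes_A S_v$ for a simple module $S_v$ kills the summands $Ae_u\otimes_\k e_wA$ with $w\ne v$ and sends $Ae_u\otimes_\k e_vA$ to $Ae_u$; since the $P_n$ are right-projective and $A$ itself is right-projective, $P_\bullet\otimes_A S_v$ is a projective $A$-resolution of $S_v$, and a short inspection of the differentials (their matrix entries lie in the radical of $A^e$, whose image under $-\otimes_A S_v$ lands in the radical of $A$) shows it remains minimal. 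Its length is therefore $\mathrm{pd}_A(S_v)\le\gldim(A)=d$, so $P_n\otimes_A S_v=0$ for all $n>d$ and all $v$; as every $P_n$ is built out of the bimodules $Ae_u\otimes_\k e_wA$, this forces $P_n=0$ for $n>d$.

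Granting the claim, the conclusion is immediate: we have a resolution $0\to P_d\to\cdots\to P_1\to P_0\to A\to 0$ in $D(A^e)$ in which each $P_i$ is a direct summand of a finite direct sum of copies of $A^e=A\boxtimes A$, hence $P_i\in\langle A\boxtimes A\rangle_0$; viewing the resolution as an iterated sequence of distinguished triangles and inducting on its length, one obtains $A\in\langle A\boxtimes A\rangle_d$, so $\Ddim(A)\le d=\gldim(A)$.

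I expect the main obstacle to be the bookkeeping in the middle step: verifying simultaneously that the passage from the minimal $A^e$-resolution of $A$ to the complexes $P_\bullet\otimes_A S_v$ preserves exactness (using that the $P_n$, and $A$ itself, are projective one-sided modules) and preserves minimality (one must pin down the radical of $A^e$ and track what the differentials become after the tensor), together with the identification of the projective $A^e$-modules with direct sums of the $Ae_u\otimes_\k e_wA$ — this last point genuinely uses that $A$ is basic, since over an imperfect field a tensor product of local algebras need not be local. The remaining steps are formal.
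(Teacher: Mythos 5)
Your proof is correct and follows essentially the same route as the paper: bound the projective dimension of the diagonal bimodule $A$ over $A^{\op}\otimes_\k A$ by $\gldim(A)$ and then note that each term of the minimal projective $A^e$-resolution lies in $\langle A\boxtimes A\rangle_0$. The only difference is that the paper dispatches the key bound by citing Rouquier's Lemma~7.2 (using basicness to ensure $\End_A(M)=\k$ for simple $M$, equivalently separability of $A/R$), whereas you reprove that lemma inline via the minimal resolution and the functors $-\otimes_A S_v$; your bookkeeping on exactness and minimality after tensoring is sound.
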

\begin{proof}
Denote $n:=\gldim(A)$.
Since $A$ is basic, we have $\End_A(M)=\k$ for any simple $A$-module. Hence assumptions of Lemma 7.2 in \cite{Rou} are satisfied  and the projective dimension of $A^{\op}\otimes_\k A$-module $A$ is $n$. Since any finitely generated projective $A^{\op}\otimes_\k A$-module is in $\langle A\otimes_\k A\rangle_0$, 
it follows that $A\in\langle A\otimes_\k A\rangle_n$. Thus $\Ddim(A)\le n$ by Definition~\ref{def_ddim}.
\end{proof}

\subsection{Serre dimension}

Recall that a functor $S\colon \TT\to\TT$ on a $\k$-linear $\Hom$-finite  triangulated category is called a \emph{Serre functor} if there exists a functorial isomorphism
$$\Hom(X,Y)\cong  \Hom(Y,S(X))^*$$
for all $X,Y\in\TT$. Such functor is unique up to an isomorphism if exists. 

Let $A$ be a finite-dimensional basic $\k$-algebra of finite global dimension. That is, $A$ is isomorphic to some path algebra with relations of a finite quiver. Then the Serre functor on $\Perf A$ (which is equivalent to $D^b(\modd A)$ in this case) is given by 
$$S(X)=X\otimes_A^L A^*,$$ 
where $A^*=\Hom_{\k}(A,\k)$ is treated as an $A^{\op}\otimes A$-bimodule. We call $A^*$ the \emph{Serre bimodule}. 
Decomposition $A=\oplus_i Ae_i=\oplus_i Q_i$ of left $A$-modules yields decomposition
$A^*=\oplus e_iA^*=\oplus I_i$ of right $a$-modules. In particular, 
\begin{equation}
\label{eq_SPI}
S(P_i)=P_i\otimes_A^LA^*\cong e_iA\otimes_AA^*\cong e_iA^*\cong I_i. 
\end{equation}
%For $X$ a smooth projective variety over a field $\k$ the Serre functor on $D^b(\coh %X)$ is given by $-\otimes \omega_X[d]$, where $\omega_X$ is a canonical line bundle and %$d=\dim X$. 

In \cite{EL} the authors define Serre dimension for 
%arbitrary category $\Perf(A)$ where $A$ is a smooth and compact dg algebra over $\k$. 
arbitrary triangulated category $\TT$ with a classical generator $G$ and a Serre functor $S$. 
First we make the following notation.
\begin{definition}
\label{def_w}
For a graded vector space $V$ we denote 
$$\inf V:=\inf\{i\mid V^i\ne 0\},\qquad \sup V:=\sup\{i\mid V^i\ne 0\}.$$ 
Also we denote $w(V):=\sup V-\inf V$.

Similarly, for a complex $C$ we put $\inf C:=\inf H^{\bul}(C), \sup C:=\sup H^{\bul}(C)$.
\end{definition}

By the definition, the upper Serre dimension and the lower Serre dimension of $\TT$ are 
$$
\LSdim(\TT):=\liminf_{m\to+\infty} \frac{-\sup \Hom^{\bul}(G,S^m(G))}m,\qquad 
\USdim(\TT):=\limsup_{m\to+\infty} \frac{-\inf \Hom^{\bul}(G,S^m(G))}m.
$$
We refer to \cite{EL} for the motivation and related discussions.

%In the special case of the category $D^b(\modd A)$, 
Suppose $A$ is a finite-dimensional algebra, $\gldim(A)$ is finite, $\TT=\Perf A$ and $G=A$. Then  (see \cite[Prop. 5.5]{EL}) the above definition boils down to
\begin{equation}
\label{eq_infsup}
\LSdim(\Perf A)=\lim_{m\to+\infty} \frac{-\sup ((A^*)^{\otimes^L_Am})}m, \qquad 
\USdim(\Perf A)=\lim_{m\to+\infty} \frac{-\inf ((A^*)^{\otimes^L_Am})}m,
\end{equation}
where $(A^*)^{\otimes^L_Am}=A^*\otimes^L_A A^*\otimes^L_A\ldots \otimes^L_A A^*$ is the $m$-th derived tensor power of the Serre bimodule.
For such algebras we will simply write $\USdim(A)$ and $\LSdim(A)$ instead of
$\USdim(\Perf A)$ and $\LSdim(\Perf A)$.

%Clearly, $\LSdim(A)\le \Sdim(A)$. Also, o
One has the following bounds, see \cite[Rem. 5.4, Rem. 5.10, Prop.  5.13]{EL}.
\begin{prop}
\label{prop_serregldim}	
Let $A$ be a finite-dimensional algebra of finite global dimension. Then
$$0\le \LSdim(A)\le \USdim(A)\le \gldim(A).$$
\end{prop}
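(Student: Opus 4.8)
The plan is to work directly with the description \eqref{eq_infsup} of the Serre dimensions in terms of the derived tensor powers of the Serre bimodule, using the identification $S^m(A)\simeq (A^*)^{\otimes_A^L m}$ that follows from \eqref{eq_SPI}. Throughout set $n:=\gldim(A)$, which is finite by hypothesis, so that $\Perf A\simeq D^b(\modd A)$ carries the Serre functor $S(-)=-\otimes_A^L A^*$.

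The first and main step is the estimate: for every $X\in\Perf A$ one has
$$\inf(S(X))\ge \inf(X)-n \qquad\text{and}\qquad \sup(S(X))\le \sup(X).$$
To prove it I would invoke the standard fact that, since $\gldim(A)=n$, any $X\in D^b(\modd A)$ with cohomology concentrated in $[\inf X,\sup X]$ admits a representative by a bounded complex $P^{\bul}$ of finitely generated projective right $A$-modules with $P^i=0$ for $i>\sup X$ and for $i<\inf X-n$; this is obtained by a truncation argument, peeling off the top cohomology module (projective dimension $\le n$) and inducting on the cohomological amplitude. Then $S(X)\simeq P^{\bul}\otimes_A A^*$, a complex of right $A$-modules whose $i$-th term $P^i\otimes_A A^*$ vanishes outside $[\inf X-n,\sup X]$; hence its cohomology lies in that range, which is exactly the claimed estimate.

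The second step is to iterate. By \eqref{eq_SPI} the object $S(A)=A^*=\bigoplus_i I_i$ is a module placed in degree $0$, so $\inf(A^*)=\sup(A^*)=0$. Applying the estimate $m$ times gives $\sup(S^m(A))\le 0$ and $\inf(S^m(A))\ge -mn$ for all $m\ge 1$. Substituting into \eqref{eq_infsup}: from $-\sup\big((A^*)^{\otimes_A^L m}\big)\ge 0$ we get $\LSdim(A)\ge 0$, and from $-\inf\big((A^*)^{\otimes_A^L m}\big)\le mn$ we get $\USdim(A)\le n=\gldim(A)$. Finally $\LSdim(A)\le\USdim(A)$ holds term by term: since $S$ is an autoequivalence, $S^m(A)\ne 0$, so $\inf(S^m(A))\le\sup(S^m(A))$, whence $-\sup(S^m(A))\le -\inf(S^m(A))$; dividing by $m$ and passing to the limit yields the inequality.

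The only genuinely delicate point is the existence of the projective representative of $X$ in the prescribed window of degrees together with the correct bookkeeping of the left/right module structures on $A^*$ when forming $P^{\bul}\otimes_A A^*$; once this is in place, the rest is formal. The convergence of the limits defining $\LSdim(A)$ and $\USdim(A)$ may be quoted from \cite{EL}.
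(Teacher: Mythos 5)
Your argument is correct, and it is a self-contained version of what the paper merely cites from \cite[Rem.~5.4, Rem.~5.10, Prop.~5.13]{EL} without proof. The heart of it is the amplitude estimate $\sup S(X)\le\sup X$ and $\inf S(X)\ge\inf X-\gldim A$, which you obtain by replacing $X$ with a complex of projectives supported in degrees $[\inf X-n,\sup X]$ and tensoring with $A^*$; iterating from $S(A)=A^*$ (concentrated in degree $0$) then gives $\sup S^m(A)\le 0$ and $\inf S^m(A)\ge -mn$, and the three inequalities follow from \eqref{eq_infsup}. Two small remarks. First, the existence of the projective representative in the window $[\inf X-n,\sup X]$ is genuine but standard: if $P^\bul$ is a bounded-above projective resolution of $X$ with $P^i=0$ for $i>\sup X$, exactness in degrees $<\inf X$ shows that the $n$-th syzygy of $\im(d^{\inf X-1})$, which is $\ker(d^{\inf X-n})$, is projective, so one may replace $P^{\inf X-n}$ by the (projective) image $\im(d^{\inf X-n})\subset P^{\inf X-n+1}$ placed in degree $\inf X-n$; your "peel off the top cohomology and induct on amplitude" version also works after a short cone bookkeeping. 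Second, you do not actually need to invoke the convergence of the limits in \eqref{eq_infsup}: the original $\liminf$/$\limsup$ definitions already give $\LSdim\ge 0$, $\USdim\le n$, and $\LSdim\le\USdim$ directly from the termwise bounds $-\sup S^m(A)\ge 0$, $-\inf S^m(A)\le mn$, and $-\sup S^m(A)\le -\inf S^m(A)$ (the last since $S^m(A)\neq 0$, $S$ being an autoequivalence).
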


We point out that derived Morita-equivalent algebras have equal upper and lower Serre dimensions. Indeed, dimension is defined in categorical terms.

%We will need the next result (see \cite[Prop. 5.16]{EL}):
%\begin{prop}
%Let $A$ and $B$ be two finite-dimensional algebras over a perfect field $\k$. 
%Suppose $\gldim(A),\gldim(B)<\infty$. Then one has
%$$\USdim(A\otimes_{\k} B)=\USdim(A)+\USdim(B),\qquad \LSdim(A\otimes_{\k} %B)=\LSdim(A)+\LSdim(B).$$
%\end{prop}

\medskip
Recall the following definition
\begin{definition}
Triangulated $\k$-linear category $\TT$ with a Serre functor $S$ is called \emph{$\frac mn$-fractionally Calabi-Yau (or fractionally CY)} if the iterated Serre functor $S^n$ is isomorphic to the shift functor $[m]$. We say that a $\k$-algebra $A$ is $\frac mn$-fractionally Calabi-Yau if such is the category $\Perf A$.
\end{definition}

\begin{lemma}
\label{lemma_fCYtensor}
Let $A,B$ be two basic finite-dimensional $\k$-algebras of finite global dimension. Suppose $A$ is $\frac{m_a}{n_a}$-fractionally CY and $B$ is $\frac{m_b}{n_b}$-fractionally CY. Then
$A\otimes_{\k} B$ is a $\frac{m_an_b+m_bn_a}{n_an_b}$-fractionally CY algebra.
\end{lemma}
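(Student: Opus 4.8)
The plan is to reduce the statement to a computation with Serre functors on the tensor product category. Recall that for a basic finite-dimensional algebra $C$ of finite global dimension, $\Perf C$ carries a Serre functor $S_C$, and being $\frac{p}{q}$-fractionally CY means $S_C^q \cong [p]$ as autoequivalences. First I would use the fact that $A\otimes_\k B$ is again basic and of finite global dimension (by \cite[Th. 16]{Au}, as already invoked in the proof of Lemma~\ref{lemma_tensor}), so that $\Perf(A\otimes_\k B)\cong D^b(\modd(A\otimes_\k B))$ has a Serre functor; this makes the fractional CY condition meaningful for the tensor product.

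The key step is to identify the Serre bimodule of $A\otimes_\k B$. One has $(A\otimes_\k B)^* \cong A^* \otimes_\k B^*$ as $(A\otimes_\k B)^{\op}\otimes_\k(A\otimes_\k B)$-bimodules, and under the Künneth-type identification $D^b(\modd(A\otimes_\k B))\cong D^b(\modd A)\otimes D^b(\modd B)$ (derived tensor product of categories, valid here since $A,B$ are smooth/finite global dimension), the Serre functor factors as $S_{A\otimes B}\cong S_A \boxtimes S_B$, where $\boxtimes$ is the external product functor. This is the standard multiplicativity of Serre functors under external tensor products; I would cite or quickly derive it from the identification $S_C(X)=X\otimes^L_C C^*$ together with the isomorphism of Serre bimodules above. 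Consequently $S_{A\otimes B}^{N}\cong S_A^N \boxtimes S_B^N$ for every $N$.

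Now take $N=n_an_b$. Then
$$S_{A\otimes B}^{n_an_b}\cong S_A^{n_an_b}\boxtimes S_B^{n_an_b}\cong (S_A^{n_a})^{n_b}\boxtimes (S_B^{n_b})^{n_a}\cong [m_an_b]\boxtimes[m_bn_a].$$
Since the shift functors on the two factors both act as the ambient shift on the external product (i.e.\ $[i]\boxtimes[j]\cong[i+j]$ on $D^b(\modd A)\otimes D^b(\modd B)$), this is isomorphic to $[m_an_b+m_bn_a]$. Hence $S_{A\otimes B}^{n_an_b}\cong [m_an_b+m_bn_a]$, which is exactly the assertion that $A\otimes_\k B$ is $\frac{m_an_b+m_bn_a}{n_an_b}$-fractionally CY.

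The main obstacle is making the external-product identifications rigorous at the level of triangulated (or dg-enhanced) categories: one needs that the natural functor $D^b(\modd A)\otimes D^b(\modd B)\to D^b(\modd(A\otimes_\k B))$ is an equivalence and that it intertwines $S_A\boxtimes S_B$ with $S_{A\otimes B}$ and $[i]\boxtimes[j]$ with $[i+j]$. For the purposes of this paper it is cleanest to bypass the abstract categorical statement entirely and argue directly with Serre bimodules: compute $\big((A\otimes_\k B)^*\big)^{\otimes^L_{A\otimes B} N}\cong (A^*)^{\otimes^L_A N}\otimes_\k (B^*)^{\otimes^L_B N}$ using that $-\otimes_\k-$ is exact and commutes with the relevant derived tensor products over $A$ and $B$ respectively, then observe that for $N=n_an_b$ the right-hand side becomes $A[m_an_b]\otimes_\k B[m_bn_a]\cong (A\otimes_\k B)[m_an_b+m_bn_a]$ as a bimodule, using the hypotheses $(A^*)^{\otimes^L_A n_a}\cong A[m_a]$ and $(B^*)^{\otimes^L_B n_b}\cong B[m_b]$. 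This gives $S_{A\otimes B}^{n_an_b}(M)\cong M[m_an_b+m_bn_a]$ for all $M$, hence $S_{A\otimes B}^{n_an_b}\cong [m_an_b+m_bn_a]$, as required.
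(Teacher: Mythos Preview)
Your proposal is correct, and the concrete bimodule computation you give at the end is exactly the paper's argument: it too cites \cite[Th.~16]{Au} for finite global dimension of $C=A\otimes_\k B$, then uses $(C^*)^{\otimes_C^L i}\cong (A^*)^{\otimes_A^L i}\otimes_\k (B^*)^{\otimes_B^L i}$ together with the hypotheses $(A^*)^{\otimes_A^L n_a}\cong A[m_a]$ and $(B^*)^{\otimes_B^L n_b}\cong B[m_b]$. Your initial detour through $S_A\boxtimes S_B$ is unnecessary but not wrong; the paper simply goes straight to the bimodule identity.
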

\begin{proof}
Let $C:=A\otimes_\k B$.
Since $A,B$ are basic, $C$ is also basic and has finite global dimension by 
\cite[Th. 16]{Au}. Now the statement follows from isomorphisms
$$(A^*)^{\otimes_A^L n_a}\cong A[m_a], \quad (B^*)^{\otimes_B^L n_b}\cong B[m_b],\quad 
(C^*)^{\otimes_C^L i}\cong (A^*)^{\otimes_A^L i}\otimes_\k (B^*)^{\otimes_B^Li}$$
for any $i\ge 1$.
\end{proof}

For a fractionally CY algebra, the Serre dimension can be easily computed, we have
\begin{prop}
\label{prop_fCYsdim}
Let $A$ be a finite-dimensional algebra of finite global dimension. Suppose $A$ is a $\frac mn$-fractionally CY algebra. Then
$$\LSdim(A)=\USdim(A)=\frac mn.$$
\end{prop}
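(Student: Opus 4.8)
The statement to prove is Proposition~\ref{prop_fCYsdim}: if $A$ is a finite-dimensional $\k$-algebra of finite global dimension which is $\frac mn$-fractionally Calabi-Yau, then $\LSdim(A)=\USdim(A)=\frac mn$.

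The plan is to use directly the formula~\eqref{eq_infsup} for Serre dimension in terms of the derived tensor powers of the Serre bimodule $A^*$. The key input is the hypothesis that $(A^*)^{\otimes_A^L n}\cong A[m]$ as objects (in fact as bimodules, but we only need the object-level statement) of $D(A^{\op}\otimes_\k A)$, equivalently that $S^n\cong [m]$ on $\Perf A$.

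First I would compute $\sup$ and $\inf$ of the iterated tensor powers along the subsequence of multiples of $n$. For any $j\ge 1$ we have $(A^*)^{\otimes_A^L (jn)}\cong ((A^*)^{\otimes_A^L n})^{\otimes_A^L j}\cong (A[m])^{\otimes_A^L j}\cong A[jm]$, using associativity of the derived tensor product over $A$ and the fact that $A$ is the unit. Hence $\sup((A^*)^{\otimes_A^L(jn)}) = \inf((A^*)^{\otimes_A^L(jn)}) = -jm$, since $H^\bullet(A[jm])$ is concentrated in degree $-jm$. Plugging into~\eqref{eq_infsup} restricted to the subsequence $m_k = jn$, we get that both limits equal $\lim_{j\to\infty} \frac{-(-jm)}{jn} = \frac mn$. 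Since the limits in~\eqref{eq_infsup} exist (this is part of the cited \cite[Prop. 5.5]{EL}, or follows from subadditivity of $-\inf$ and superadditivity of $-\sup$ under derived tensor product together with Proposition~\ref{prop_serregldim} which bounds everything), the value of the limit along the full sequence agrees with the value along the subsequence, giving $\LSdim(A)=\USdim(A)=\frac mn$.

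The only mild subtlety — and the step I would be most careful about — is justifying the passage from the subsequence to the full sequence. One clean way: write $r = jn + s$ with $0\le s < n$; then $(A^*)^{\otimes_A^L r}\cong (A^*)^{\otimes_A^L s}[jm]$, so $\sup$ and $\inf$ of the $r$-th power differ from those of the $s$-th power by exactly $jm$. As $r\to\infty$ (hence $j\to\infty$ with $s$ bounded), both $\frac{-\sup((A^*)^{\otimes_A^L r})}{r}$ and $\frac{-\inf((A^*)^{\otimes_A^L r})}{r}$ converge to $\frac mn$, because the contribution of the bounded term $(A^*)^{\otimes_A^L s}$ is $O(1/r)$ and $jm/r \to m/n$. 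This handles all four corners at once and makes clear that the $\liminf$ and $\limsup$ in the general definition both collapse to $\frac mn$. No serious obstacle is expected here; it is essentially a bookkeeping argument once the isomorphism $(A^*)^{\otimes_A^L n}\cong A[m]$ is invoked.
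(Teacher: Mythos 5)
Your argument is correct and matches the paper's proof: the paper also observes that $(A^*)^{\otimes^L_A jn}\cong A[jm]$, so that $\inf=\sup=-jm$ along multiples of $n$, and then cites~\eqref{eq_infsup}. Your extra paragraph carefully handling the passage from the subsequence $r=jn$ to arbitrary $r=jn+s$ is a sensible filling-in of what the paper dispatches with ``clearly follows''; nothing is different in substance.
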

\begin{proof}
For any $j>0$ one has $(A^*)^{\otimes^L_Ajn}\cong A[jm]$. Therefore
$$\inf (A^*)^{\otimes^L_Ajn}=\sup (A^*)^{\otimes^L_Ajn}=-jm.$$ 
The statement clearly follows from \eqref{eq_infsup}.
\end{proof}

The next lemma follows easily from the classification of categories with Rouquier dimension zero, see \cite[Th. 3.7]{EL}. We prefer to give a simple independent proof.
\begin{lemma}
\label{lemma_rdimpositive}
Let $\TT$ be an $\Ext$-finite triangulated $\k$-linear category with a Serre functor. 
Assume $\TT$ is connected (=indecomposable into direct  product).
Suppose $\Rdim (\TT)=0$, then $\LSdim (\TT)=\USdim (\TT)$.
\end{lemma}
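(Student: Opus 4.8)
The plan is to use the classification of Rouquier-dimension-zero categories only implicitly and instead argue directly from the structure of a strong generator $G$ with $\TT=\langle G\rangle_0$. The point of $\Rdim(\TT)=0$ is that there is an object $G$ such that every object of $\TT$ is a direct summand of a finite direct sum of shifts of $G$; in particular $G$ is a classical generator, so we may compute Serre dimension via $G$. First I would reduce to the case where $\TT$ is $\Hom$-finite and has finitely many indecomposables: since $\TT$ is $\Ext$-finite, connected and $\Rdim(\TT)=0$, the object $G$ can be taken to be a direct sum $G=\bigoplus_{i=1}^n G_i$ of indecomposables, and every object of $\TT$ is a summand of $\bigoplus_i G_i[k_i]$ for finitely many shifts $k_i$; standard Krull--Schmidt type arguments (the category is Karoubian, being a thick subcategory) then show $\TT$ has finitely many indecomposable objects up to shift, and in particular $A:=\End_\TT(G)$ is a finite-dimensional $\k$-algebra and $\TT\simeq\Perf A \simeq D^b(\modd A)$ with $A$ of finite global dimension. (Alternatively one cites \cite[Th. 3.7]{EL} to get this form directly, but the independent route keeps the proof self-contained.)

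Next I would bring in the Serre functor. Since $\TT$ has finitely many indecomposables up to shift, the Auslander--Reiten theory applies: $S$ permutes the (finitely many) indecomposable objects up to shift, so there is an integer $N\ge 1$ and, for each indecomposable $X$, an integer $d(X)$ with $S^N(X)\cong X[d(X)]$. Connectedness of $\TT$ forces $d(X)$ to be the same for all indecomposables $X$: if $\Hom^\bullet(X,Y)\ne 0$ for indecomposables $X,Y$ then applying $S^N$ and using that $S^N$ is an autoequivalence gives $\Hom^\bullet(X[d(X)],Y[d(Y)])\ne 0$, hence $d(X)=d(Y)$; since $\TT$ is connected, the relation "$X,Y$ linked by a nonzero $\Hom^\bullet$" generates everything, so $d$ is constant, say $d(X)=m$. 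Thus $S^N\cong[m]$ on all of $\TT$, i.e. $\TT$ is $\frac{m}{N}$-fractionally Calabi--Yau.

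Finally, once $\TT$ is $\frac{m}{N}$-fractionally CY, Proposition~\ref{prop_fCYsdim} (applied to $A$) gives $\LSdim(\TT)=\USdim(\TT)=\tfrac{m}{N}$, which is exactly the claim. More directly, from $S^{jN}(G)\cong G[jm]$ one computes $\inf\Hom^\bullet(G,S^{jN}(G))=\sup\Hom^\bullet(G,S^{jN}(G))=-jm$, so both limits defining $\LSdim$ and $\USdim$ equal $m/N$.

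The main obstacle is the first paragraph: establishing cleanly, without simply quoting the full classification \cite[Th. 3.7]{EL}, that $\Rdim(\TT)=0$ together with $\Ext$-finiteness forces $\TT$ to have only finitely many indecomposables up to shift (equivalently, to be $\Perf$ of a finite-dimensional algebra of finite global dimension). The rest --- that a connected $\Hom$-finite category with finitely many indecomposables and a Serre functor is fractionally CY --- is a routine Auslander--Reiten argument, and the passage from fractional CY to equal Serre dimensions is immediate from Proposition~\ref{prop_fCYsdim}.
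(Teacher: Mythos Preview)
Your approach is exactly the paper's: finitely many indecomposables up to shift, a common period $N$ for the Serre functor, constancy of the shift $d(X)$ from connectedness, then read off the Serre dimensions. The only real gap is in the sentence ``applying $S^N$ and using that $S^N$ is an autoequivalence gives $\Hom^\bullet(X[d(X)],Y[d(Y)])\ne 0$, hence $d(X)=d(Y)$.'' As written this does not follow: the statement $\Hom^\bullet(X[d(X)],Y[d(Y)])\ne 0$ is just $\Hom^\bullet(X,Y)\ne 0$ shifted, so you have deduced nothing new. What you need (and what the paper does) is to use that $S^N$ is an autoequivalence to get an \emph{isomorphism} of graded spaces
\[
\Hom^i(X,Y)\;\cong\;\Hom^i(S^N X,S^N Y)\;\cong\;\Hom^{\,i+d(Y)-d(X)}(X,Y)
\]
for every $i$, iterate to obtain $\Hom^i(X,Y)\cong\Hom^{\,i+t(d(Y)-d(X))}(X,Y)$ for all $t\ge 0$, and then invoke $\Ext$-finiteness: if $d(X)\ne d(Y)$ this forces $\Hom^\bullet(X,Y)=0$. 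Contrapositively, nonvanishing $\Hom^\bullet$ gives $d(X)=d(Y)$, and your connectedness argument finishes.

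Two minor remarks. First, the detour through ``$\TT\simeq\Perf A$ with $A$ finite-dimensional of finite global dimension'' is unnecessary; once you have finitely many indecomposables and the constancy of $d$, the direct computation you give at the end (from $S^{jN}(G)\cong G[jm]$) already yields $\LSdim=\USdim=m/N$. Second, the claim ``$S^N\cong[m]$ on all of $\TT$'' as an isomorphism of functors is stronger than what you have shown (objectwise isomorphisms on indecomposables); the paper avoids this by going straight to the dimension computation, as you also do in your last paragraph.
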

\begin{proof}
By  Krull-Schmidt theory, any object in  $\TT$ is a direct sum of indecomposable objects, and there are  only finitely many indecomposable objects in $\TT$ up to a shift. It follows that for any indecomposable object $M$ there exist some $n(M)>0$ and $m(M)$ such that $S^{n(M)}(M)\cong M[m(M)]$. Taking a certain multiple we can assume that  $n(M)=:n$ does not depend on $M$. 

Now let $M_1,M_2\in\TT$ be indecomposable and $m(M_1)\ne m(M_2)$. We claim that $\Hom^\bul(M_1,M_2)=0$. Indeed, 
\begin{multline*}
\Hom^i(M_1,M_2)\cong \Hom^i(S^n(M_1),S^n(M_2))\cong \Hom^i(M_1[m(M_1)], M_2[m(M_2)])=\\
=\Hom^{i+m(M_2)-m(M_1)}(M_1,M_2)
\end{multline*}
and iterating we get 
$$\Hom^i(M_1,M_2)\cong   \Hom^{i+t(m(M_2)-m(M_1))}(M_1,M_2)$$
for any $t\ge 0$. Since $\TT$ is $\Ext$-finite, this is possible only if $\Hom^i(M_1,M_2)=0$ for all $i$.
We get a direct product decomposition
$$\TT\cong \prod_{m\in\Z} \langle M\mid \text{$M$ indecomposable, $m(M)=m$}\rangle.$$
Since $\TT$ is connected we get that all indecomposable objects $M\in\TT$ have the same value $m(M)=:m$. It follows from definitions now that $\LSdim \TT=\USdim \TT=\frac mn$.
\end{proof}

\section{Path algebras}
\label{section_path}

Let $\Gamma$ be a connected quiver with no oriented cycles. Let $A=\k\Gamma$ be its path algebra. Then $A$ is finite-dimensional and $\gldim(A)=1$ (unless $\Gamma$ has no arrows and then $\gldim(A)$=0). Recall that we have $\Perf A\cong D^b(\modd A)$.

Properties of the category $\Perf A$ are quite different in two cases: Dynkin quivers (such that the underlying graph is of Dynkin types $A_n, D_n$ or $E_6,E_7,E_8$) and non-Dynkin quivers.

%Next two propositions describe behavior of the Serre functor on $\Perf A$ in these two %cases.

\begin{prop}[See {\cite[Prop. 3.1]{HI}} or {\cite[Theorem 4.1]{MY}}]
\label{prop_DynkinCY}
Let $\Gamma$ be a Dynkin quiver. Then  the category $\Perf \k\Gamma$ is  $\frac{h-2}h$-fractionally Calabi-Yau: $S^{h}\cong [h-2]$, 
where $h$ is the Coxeter number of $\Gamma$, see table
$$
\begin{array}{|r|c|c|c|c|c|}
		 \hline
			type & A_n & D_n & E_6 & E_7 & E_8\\
		 \hline
			h & n+1 & 2(n-1) & 12& 18 & 30 \\
		 \hline	
\end{array}
$$
Hence 
$$\LSdim(\k\Gamma)=\USdim(\k\Gamma)=\frac{h-2}{h}.$$
\end{prop}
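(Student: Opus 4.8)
The plan is to establish the fractional Calabi--Yau property $S^h\cong[h-2]$ and then deduce the Serre dimensions from Proposition~\ref{prop_fCYsdim}. Since the last sentence of the statement follows immediately from the first part by Proposition~\ref{prop_fCYsdim}, all the work goes into proving $S^h\cong[h-2]$ on $\Perf\k\Gamma=D^b(\modd\k\Gamma)$ for a Dynkin quiver $\Gamma$.

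First I would recall that the Serre functor on $D^b(\modd\k\Gamma)$ is $S=-\otimes^L_{\k\Gamma}(\k\Gamma)^*$, which coincides with the derived Nakayama functor $\nu$; on the subcategory of projectives it sends $P_i\mapsto I_i$ as in \eqref{eq_SPI}. The key classical fact is the relation between $\nu$ and the Auslander--Reiten translation $\tau$: on $D^b(\modd\k\Gamma)$ one has $\tau=\nu\circ[-1]$, i.e.\ $S=\tau\circ[1]$. Hence $S^h=\tau^h\circ[h]$, and the statement $S^h\cong[h-2]$ is equivalent to $\tau^h\cong[-2]$. So the heart of the matter is to show the AR-translate satisfies $\tau^h\cong[-2]$ for a Dynkin quiver with Coxeter number $h$.

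For this I would use the description of the AR-quiver (equivalently, the combinatorics of the Coxeter transformation) for hereditary algebras of Dynkin type. The Coxeter transformation $c$ on the Grothendieck group $K_0(D^b(\modd\k\Gamma))$, which is induced by $\tau$, is conjugate to the Coxeter element of the corresponding Weyl group, hence has order $h$; moreover $c^h=\mathrm{id}$ reflects the fact that after $h$ applications of $\tau$ (tracked with shifts) one returns to the start. Concretely: the indecomposable objects of $D^b(\modd\k\Gamma)$ are, up to shift, in bijection with the positive roots, and $\tau$ acts on the "repetitive" AR-quiver $\Z\Gamma$ as a glide; composing $h$ times with the appropriate shift bookkeeping gives back the identity up to $[-2]$. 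Alternatively — and this may be the cleanest route — I would simply cite \cite{HI} or \cite{MY} as the statement already does, since they prove exactly $\nu^h\cong[h-2]$ (the "$(-2)$-fractional CY" / "$\frac{h-2}{h}$-CY" property of the derived category of a Dynkin path algebra), and just verify the Coxeter numbers in the table, which is standard.

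The main obstacle, if one wants a self-contained argument rather than a citation, is establishing $\tau^h\cong[-2]$ as an isomorphism of functors (not merely on objects or on $K_0$): one must control the action of $\tau$ on morphisms and homotopies, typically by realizing $D^b(\modd\k\Gamma)$ via the mesh category of $\Z\Gamma$ (Happel's theorem) and computing the automorphism of $\Z\Gamma$ induced by $\tau$. Given the structure of this paper, I expect the proof simply to invoke \cite[Prop. 3.1]{HI} or \cite[Theorem 4.1]{MY} for the fractional CY property, recall the Coxeter numbers, and then apply Proposition~\ref{prop_fCYsdim} to read off $\LSdim(\k\Gamma)=\USdim(\k\Gamma)=\frac{h-2}{h}$.
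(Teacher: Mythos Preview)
Your proposal is correct and matches the paper exactly: the paper gives no proof of this proposition at all, simply citing \cite[Prop.~3.1]{HI} and \cite[Theorem~4.1]{MY} for the fractional Calabi--Yau property $S^h\cong[h-2]$, with the final sentence about $\LSdim$ and $\USdim$ being an immediate consequence of Proposition~\ref{prop_fCYsdim}. Your additional sketch of how one might prove $\tau^h\cong[-2]$ via the AR-quiver or the mesh category is extra content the paper does not attempt.
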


The following Proposition is contained essentially in \cite[Lemma 2.15]{DHKK}.
\begin{prop}
\label{prop_nonDynkinSdim}
Let $\Gamma$ be a connected non-Dynkin quiver and $A=\k\Gamma$. Then one has
$$\LSdim(A)=\USdim(A)=1.$$
\end{prop}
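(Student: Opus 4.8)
The plan is to give a direct proof (the statement being, as noted, essentially \cite[Lemma 2.15]{DHKK}): describe $S^m(A)$ explicitly using that $A=\k\Gamma$ is hereditary, and then read off both Serre dimensions at once. Since $\Gamma$ is connected and non-Dynkin it has at least one arrow, so $\gldim A=1$; hence Proposition~\ref{prop_serregldim} already gives $\LSdim(A)\le\USdim(A)\le 1$, and it remains to prove $\LSdim(A)\ge 1$. By \eqref{eq_infsup} applied to the generator $G=A=\bigoplus_v P_v$, this amounts to controlling $\sup$ and $\inf$ of $S^m(A)=(A^*)^{\otimes^L_A m}$; we will in fact show both are equal to $1-m$, which simultaneously recovers $\USdim(A)=1$.

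The heart of the matter is the identity $S^m(P_v)\cong \tau^{m-1}(I_v)[m-1]$ for all $m\ge 1$, where $\tau=D\,\mathrm{Tr}$ is the Auslander--Reiten translation. The case $m=1$ is \eqref{eq_SPI}. For the inductive step I would use two standard facts about a hereditary algebra $A$. First, if $M$ is an indecomposable non-projective $A$-module then $\Hom_A(M,A)=0$: any nonzero map $M\to A$ has image a submodule of a projective, hence projective, hence a direct summand of $M$, contradicting indecomposability; consequently the Nakayama module $\nu M=D\Hom_A(M,A)$ vanishes. Second, from the projective resolution $0\to P^1\to P^0\to M\to 0$ one computes $S(M)=M\otimes^L_A A^*\cong[\nu P^1\to\nu P^0]$, a complex in cohomological degrees $-1,0$, with $H^0=\nu M$ and $H^{-1}=D\Ext^1_A(M,A)=\tau M$. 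Combining these, $S(M)\cong\tau M[1]$ for every indecomposable non-projective module $M$. Hence, provided every $\tau^{j}(I_v)$ with $j\ge 0$ is again a nonzero indecomposable non-projective module, applying $S$ to $\tau^{j}(I_v)[j]$ yields $\tau^{j+1}(I_v)[j+1]$, and the identity follows by induction on $m$.

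This proviso is exactly where the non-Dynkin hypothesis enters, and it is the only non-formal input. For a connected non-Dynkin hereditary algebra the preprojective and the preinjective components of the Auslander--Reiten quiver are distinct; in particular the preinjective component — which contains all the injectives $I_v$ — contains no projective module, equivalently no injective $A$-module is preprojective. Therefore the forward $\tau$-orbit $I_v,\tau I_v,\tau^2 I_v,\dots$ never reaches a projective, so each $\tau^j(I_v)$ is a genuine nonzero indecomposable module; this is classical (see e.g. \cite{Ri} or \cite{ARS}). I expect this structural fact about hereditary representation type to be the only place requiring care — the rest is bookkeeping of shifts.

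Finally, since $\tau^{m-1}(I_v)$ is a module (concentrated in cohomological degree $0$), the complex $S^m(P_v)\cong\tau^{m-1}(I_v)[m-1]$ is concentrated in degree $1-m$, and summing over the vertices $v$ gives $\sup\!\big((A^*)^{\otimes^L_A m}\big)=\inf\!\big((A^*)^{\otimes^L_A m}\big)=1-m$. By \eqref{eq_infsup},
$$\LSdim(A)=\USdim(A)=\lim_{m\to+\infty}\frac{-(1-m)}{m}=1.$$
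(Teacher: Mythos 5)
Your proof is correct and takes essentially the same route as the paper's: both rest on the observation that for hereditary $A$ the Serre functor carries a non-projective indecomposable $M$ to $\tau M[1]$, together with the classical fact that for a connected non-Dynkin quiver the preprojective and preinjective AR components are disjoint, so the forward $\tau$-orbit of an injective never meets a projective. You spell out the Nakayama/AR-translation computation that the paper states without proof and record the clean formula $S^m(P_v)\cong\tau^{m-1}(I_v)[m-1]$, but the decisive input and the overall structure are the same.
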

\begin{proof}
Let $M$ be an indecomposable module, then $S(M)\in \modd A$ if $M$ is projective and 
$S(M)\in (\modd A)[1]$ otherwise. Therefore $S^k(M)=M_k[n_k]$ for all $k\ge 1$ and some modules $M_k$.
Suppose also that $M$ is projective, then $S(M)$ is injective. It is known (see, for example, \cite[2.1.14]{Ri}) that the preprojective and preinjective components of the Auslander-Reiten quiver of $\modd A$ do not intersect. It follows that none of the modules $M_k, k\ge 1$ are projective. Hence, $S(M_k)\cong M_{k+1}[1]$ and $n_k=k-1$ for $k\ge 1$. It follows that the complex $(A^*)^{\otimes^L_A k}$ of $A^{\op}\otimes A$-bimodules  is a bimodule shifted by $k-1$. Consequently, $\USdim(A)=\LSdim(A)=1$ by \eqref{eq_infsup}.
\end{proof}

We collect  the properties of $\Perf \k\Gamma$ (mostly well-known) in the following  two propositions.

\begin{prop}
\label{prop_dynkin}
Let $\Gamma$ be a connected quiver with no oriented cycles and $A=\k\Gamma$.
The following conditions are equivalent:
\begin{enumerate}
\item $\Gamma$ is a Dynkin quiver;
\item there exists finitely many indecomposable finitely generated $A$-modules; 
\item there exists finitely many indecomposable objects in $\Perf A$ up to a shift;
\item $\Rdim(A)=0$; 
\item $A$ is a fractionally Calabi-Yau algebra;
\item $\LSdim(A)=\USdim(A)<1$.
\end{enumerate}
\end{prop}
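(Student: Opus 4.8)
The plan is to prove the chain of equivalences $(1)\Leftrightarrow(2)\Leftrightarrow(3)\Leftrightarrow(4)\Leftrightarrow(5)\Leftrightarrow(6)$ mostly by citing classical representation theory together with the Serre-dimension results already established. First I would recall that $(1)\Leftrightarrow(2)$ is Gabriel's theorem: a connected quiver $\Gamma$ with no oriented cycles has finitely many indecomposable representations if and only if its underlying graph is a simply-laced Dynkin diagram. For $(2)\Leftrightarrow(3)$ I would use that, since $\gldim(\k\Gamma)\le 1$, every object of $\Perf A\cong D^b(\modd A)$ splits as a direct sum of shifts of modules (the category is hereditary), so the indecomposable objects of $\Perf A$ up to shift are exactly the indecomposable $A$-modules; finiteness of one set is equivalent to finiteness of the other.

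Next I would handle the implications involving Rouquier and Serre dimension. For $(3)\Rightarrow(4)$: if there are finitely many indecomposables up to shift, then the direct sum $G$ of one representative from each isomorphism-plus-shift class (equivalently, $G = \bigoplus_M M$ over the finitely many indecomposable modules $M$) classically generates $\Perf A$, and in fact $\Perf A = \langle G\rangle_1$ since every object is already a finite direct sum of shifts of summands of $G$, i.e. lies in $[G]_0$, hence certainly in $\langle G\rangle_0\subseteq\langle G\rangle_1$; so $\Rdim(A)=0$. (One must be slightly careful that $\Rdim = 0$ means $\TT = \langle G\rangle_0$ for some $G$, which holds here.) For $(4)\Rightarrow(1)$: if $\Gamma$ is not Dynkin then by Proposition~\ref{prop_nonDynkinSdim} we have $\LSdim(A)=\USdim(A)=1>0$; but a category with $\Rdim = 0$ has, by Lemma~\ref{lemma_rdimpositive}, $\LSdim=\USdim$, and moreover for a connected hereditary category of Rouquier dimension zero the common value is $m/n$ where $S^n(M)\cong M[m]$, which forces $\Gamma$ Dynkin by the fractional Calabi-Yau classification — alternatively, argue directly that $\Rdim = 0$ plus $\Ext$-finiteness forces only finitely many indecomposables up to shift (the proof of Lemma~\ref{lemma_rdimpositive} already uses Krull–Schmidt finiteness), giving $(4)\Rightarrow(3)\Rightarrow(2)\Rightarrow(1)$.

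For the fractionally Calabi–Yau equivalences: $(1)\Rightarrow(5)$ is Proposition~\ref{prop_DynkinCY}. For $(5)\Rightarrow(6)$: if $A$ is $\frac mn$-fractionally CY then by Proposition~\ref{prop_fCYsdim} we get $\LSdim(A)=\USdim(A)=m/n$, and since $S^n(A)\cong A[m]$ with $A$ having finite global dimension and being non-semisimple in the interesting range, the value $m/n$ is strictly less than $1$ — this requires knowing $m < n$, which for Dynkin quivers is exactly $h-2 < h$; but to get it from $(5)$ alone one notes that $\Rdim$ of a fractionally CY hereditary algebra is $0$ so by Proposition~\ref{prop_nonDynkinSdim} (contrapositive) $\Gamma$ is Dynkin, whence $m/n = (h-2)/h < 1$. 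For $(6)\Rightarrow(4)$: if $\Gamma$ were non-Dynkin, Proposition~\ref{prop_nonDynkinSdim} gives $\LSdim(A)=\USdim(A)=1$, contradicting $(6)$; so $\Gamma$ is Dynkin and $\Rdim(A)=0$ by $(1)\Rightarrow(4)$ already shown.

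The main obstacle is organizing the logical cycle cleanly so that no implication is circular: several of the arrows naturally route through "$\Gamma$ is Dynkin," so I would prove $(1)\Rightarrow(4),(5),(6)$ directly from Gabriel plus Propositions~\ref{prop_DynkinCY} and~\ref{prop_fCYsdim}, and then close the loop via $(4)\Rightarrow(3)\Rightarrow(2)\Rightarrow(1)$ (using Krull–Schmidt finiteness from $\Rdim = 0$) and via $(6)\Rightarrow(1)$ and $(5)\Rightarrow(1)$ (both using the contrapositive of Proposition~\ref{prop_nonDynkinSdim}). The genuinely non-formal inputs are all imported: Gabriel's classification, hereditariness of $\k\Gamma$, the disjointness of preprojective and preinjective Auslander–Reiten components (used in Proposition~\ref{prop_nonDynkinSdim}), and the fractional Calabi–Yau property of Dynkin path algebras; the rest is bookkeeping.
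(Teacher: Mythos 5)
Your overall architecture matches the paper's: Gabriel's classification gives $(1)\Leftrightarrow(2)$, hereditariness gives $(2)\Leftrightarrow(3)$, Krull--Schmidt gives $(3)\Leftrightarrow(4)$, Propositions~\ref{prop_DynkinCY} and~\ref{prop_fCYsdim} give $(1)\Rightarrow(5),(6)$, and Proposition~\ref{prop_nonDynkinSdim} closes the loop. The one place where your write-up is not solid is the $(5)\Rightarrow(1)$ (equivalently $(5)\Rightarrow(6)$) step. You first offer an argument through ``$\Rdim(A)=0$ implies $\LSdim=\USdim$'' (Lemma~\ref{lemma_rdimpositive}), but this cannot by itself distinguish Dynkin from non-Dynkin, since the non-Dynkin case also has $\LSdim=\USdim(=1)$. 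You then assert that ``$\Rdim$ of a fractionally CY hereditary algebra is $0$'' and conclude $\Gamma$ Dynkin ``by Proposition~\ref{prop_nonDynkinSdim} (contrapositive)''. That chain has two problems: the asserted fact is not obvious and is not proved anywhere you cite (proving it essentially requires the same Auslander--Reiten argument one is trying to avoid), and the contrapositive of Proposition~\ref{prop_nonDynkinSdim} as stated is about Serre dimension, not $\Rdim$, so it does not rule out a fractional CY dimension equal to $1$.

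The paper's route is shorter and cleaner: the \emph{proof} of Proposition~\ref{prop_nonDynkinSdim} shows that for non-Dynkin $\Gamma$ and a projective $P$, $S^k(P)\cong M_k[k-1]$ with every $M_k$ non-projective (preprojective and preinjective AR-components are disjoint), so $S^n$ can never be isomorphic to a shift functor; this gives $(5)\Rightarrow(1)$ directly, and $\USdim=1$ gives $(6)\Rightarrow(1)$. If you replace your two-step detour through $\Rdim$ by this observation, the rest of your proof is correct and follows the paper's argument.
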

\begin{proof}
We sketch the proofs or recall the reference for the convenience of the reader.

Equivalence (1) $\Longleftrightarrow$ (2) is a classical result by P.\,Gabriel.

Equivalence (2) $\Longleftrightarrow$ (3) holds because any complex of $A$-modules is isomorphic to the direct sum of its cohomology.

Equivalence (3) $\Longleftrightarrow$ (4) holds because $D^b(\modd A)$ has  Krull-Schmidt property: decomposition of objects into indecomposable summands is unique.

Implications (1) $\Longrightarrow$ (5),(6) are by Proposition~\ref{prop_DynkinCY}.

Implications (5),(6) $\Longrightarrow$ (1) follow from Proposition~\ref{prop_nonDynkinSdim}.
Briefly, suppose $\Gamma$ is non-Dynkin. Then for any indecomposable projective module $P$ we have $S^k(P)=M_k[k-1]$ for all $k\ge 1$ and some modules $M_k$.  Hence, $\USdim(A)=1$ and $A$ is not fractionally Calabi-Yau. 
\end{proof}

The same arguments also prove

\begin{prop}
\label{prop_nondynkin}
Let $\Gamma$ be a connected quiver with no oriented cycles and $A=\k\Gamma$.
The following conditions are equivalent:

\begin{enumerate}
\item $\Gamma$ is not a Dynkin quiver;
\item there exist infinitely many indecomposable finitely generated $A$-modules; 
\item $\Rdim(A)=1$; 
\item $A$ is not a fractionally Calabi-Yau algebra;
\item $\LSdim(A)=\USdim(A)=1$.
\end{enumerate}
\end{prop}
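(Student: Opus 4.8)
The plan is to establish the equivalences in Proposition~\ref{prop_nondynkin} by leveraging the work already done for Proposition~\ref{prop_dynkin}. The key observation is that conditions (1)--(5) here are precisely the logical negations of conditions (1), (2), (4), (5), (6) in the Dynkin case, so the entire statement follows formally from Proposition~\ref{prop_dynkin} together with the fact that $\Rdim(A)\in\{0,1\}$ (which holds because $\gldim(A)=1$, so $A\in\langle A\rangle_1$ makes $\Rdim(A)\le 1$, while $\Rdim(A)=0$ exactly characterizes the Dynkin case). Concretely, I would argue: (1)$\Leftrightarrow$(2) is the negation of Gabriel's theorem (equivalence (1)$\Leftrightarrow$(2) in Proposition~\ref{prop_dynkin}); (1)$\Leftrightarrow$(3) follows since $\Rdim(A)\le 1$ always and $\Rdim(A)=0$ iff $\Gamma$ is Dynkin by Proposition~\ref{prop_dynkin}(4); (1)$\Leftrightarrow$(4) is the negation of Proposition~\ref{prop_dynkin}(5); and (1)$\Leftrightarrow$(5) is Proposition~\ref{prop_nonDynkinSdim} for the forward direction and Proposition~\ref{prop_DynkinCY} (giving $\LSdim=\USdim=\frac{h-2}{h}<1$) for the contrapositive.

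First I would record that $\Rdim(A)\le 1$: since $\gldim(\k\Gamma)\le 1$, every object of $D^b(\modd A)$ is the cone of a map between two projective modules, hence lies in $\langle A\rangle_1$, so $A$ is a strong generator with generation time at most $1$. Combined with the classification of Rouquier-dimension-zero categories (or simply with Proposition~\ref{prop_dynkin}, where (1)$\Leftrightarrow$(4) already gives $\Rdim(\k\Gamma)=0$ iff $\Gamma$ is Dynkin), this immediately yields (1)$\Leftrightarrow$(3): $\Gamma$ is non-Dynkin iff $\Rdim(A)\ne 0$ iff $\Rdim(A)=1$.

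Next I would handle the Serre-dimension and fractional Calabi-Yau conditions. For (1)$\Rightarrow$(5): Proposition~\ref{prop_nonDynkinSdim} directly gives $\LSdim(A)=\USdim(A)=1$ for connected non-Dynkin $\Gamma$. For (5)$\Rightarrow$(1) and (4)$\Rightarrow$(1): if $\Gamma$ were Dynkin with Coxeter number $h$, then Proposition~\ref{prop_DynkinCY} shows $A$ is $\frac{h-2}{h}$-fractionally Calabi-Yau with $\LSdim(A)=\USdim(A)=\frac{h-2}{h}<1$, contradicting (5) in the first case and (4) in the second. The reverse implications (1)$\Rightarrow$(4) and the equivalence (4)$\Leftrightarrow$(1) then follow: by Proposition~\ref{prop_nonDynkinSdim} again, for $\Gamma$ non-Dynkin and $P$ an indecomposable projective, $S^k(P)=M_k[k-1]$ where $M_k$ is never projective (the preprojective and preinjective components of the Auslander--Reiten quiver do not meet), so $S^n(A)$ can never be a shift $A[m]$, i.e. $A$ is not fractionally Calabi-Yau. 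Finally (1)$\Leftrightarrow$(2) is exactly the contrapositive of Gabriel's theorem, which is the content of (1)$\Leftrightarrow$(2) in Proposition~\ref{prop_dynkin}.

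I do not anticipate a genuine obstacle here, since the statement is essentially the logical complement of Proposition~\ref{prop_dynkin} and all the real content (Gabriel's theorem, the fractional Calabi-Yau property of Dynkin quivers, the disjointness of preprojective and preinjective Auslander--Reiten components) has already been invoked. The only point requiring slight care is ensuring that $\Rdim(A)$ takes no value other than $0$ or $1$, so that "$\Rdim(A)\ne 0$" and "$\Rdim(A)=1$" are interchangeable; this is immediate from $\gldim(A)\le 1$. Thus the proof is, as the paper states, essentially "the same arguments" as for Proposition~\ref{prop_dynkin}, read through the lens of negation.
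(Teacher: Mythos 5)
Your proposal is correct and takes essentially the same approach as the paper, which simply remarks that "the same arguments" as for Proposition~\ref{prop_dynkin} apply; you spell this out, including the useful observation (glossed over in the paper) that $\gldim(A)\le 1$ forces $\Rdim(A)\le 1$, which is what lets one pass from "$\Rdim(A)\ne 0$" to "$\Rdim(A)=1$".
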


For diagonal dimension we have
\begin{prop}
\label{prop_ddimnorels}
Let $\Gamma$ be a quiver without oriented cycles. Then $\Ddim(\k\Gamma)=1$ if $\Gamma$ has at least one arrow, and $0$ otherwise.
\end{prop}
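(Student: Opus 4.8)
The plan is to prove the two claims separately: (i) $\Ddim(\k\Gamma)=0$ when $\Gamma$ has no arrows, and (ii) $\Ddim(\k\Gamma)=1$ when $\Gamma$ has at least one arrow. For (i), if $\Gamma$ has no arrows then $\k\Gamma\cong\k^{|\Gamma_0|}$ is semisimple, so $\k\Gamma^{\op}\otimes_\k\k\Gamma$ is semisimple too and the diagonal bimodule $\k\Gamma$ is projective over it, hence lies in $\langle (\k\Gamma)\boxtimes(\k\Gamma)\rangle_0$; thus $\Ddim(\k\Gamma)=0$. For (ii), the upper bound $\Ddim(\k\Gamma)\le 1$ is immediate: by Lemma~\ref{lemma_ddim1} we have $\Ddim(\k\Gamma)\le\gldim(\k\Gamma)=1$ (note $\k\Gamma$ is basic). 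Alternatively this also follows from Proposition~\ref{prop_ddimexcoll}, using that the indecomposable projectives $P_v$ form a full strong exceptional collection that can be grouped into two blocks by splitting the (ordered) vertices, or from Proposition~\ref{prop_ddimradical} with $d=1$ once one observes that the relevant nilpotency is governed by paths of length $\le 1$ — but the cleanest route is simply $\gldim=1$.

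For the lower bound $\Ddim(\k\Gamma)\ge 1$, the point is to rule out $\Ddim(\k\Gamma)=0$. If $\Ddim(\k\Gamma)=0$ then the diagonal bimodule $A:=\k\Gamma$ would be a direct summand of a finite direct sum of shifts of a single object $F\boxtimes G$ with $F\in\Perf A^{\op}$, $G\in\Perf A$. The strategy is to derive a contradiction from the existence of a nonzero arrow. One clean way: $\Ddim(A)=0$ forces $A$ to be ``separable'' in the derived sense, i.e.\ the multiplication $A^{\op}\otimes_\k A$-bimodule $A$ is projective, which for a finite-dimensional algebra of finite global dimension is equivalent to $\gldim(A)=0$, i.e.\ $A$ semisimple (this is essentially Rouquier's computation of the projective dimension of the diagonal bimodule via simple modules, as used in the proof of Lemma~\ref{lemma_ddim1}: since $A$ is basic, $\mathrm{pd}_{A^{\op}\otimes_\k A}A=\gldim A$, and $\Ddim(A)=0$ would force this to be $0$). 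Since a path algebra with a nonzero arrow has $\gldim=1\ne 0$, we get $\Ddim(\k\Gamma)\ge 1$, completing the proof.

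Alternatively, and perhaps more in the spirit of the paper, one can argue via Rouquier dimension when $\Gamma$ is non-Dynkin: Proposition~\ref{prop_nondynkin} gives $\Rdim(\k\Gamma)=1$, and $\Rdim\le\Ddim$ (Proposition~\ref{prop_rdimddim}) yields $\Ddim(\k\Gamma)\ge 1$ at once. For Dynkin quivers $\Gamma$ with at least one arrow we have $\Rdim=0$, so this shortcut fails and one genuinely needs the diagonal-bimodule argument above (equivalently, one notes that $\Ddim(A)=0$ combined with $A$ being connected and $\Ext$-finite would force, via Lemma~\ref{lemma_rdimpositive}-type reasoning together with the classification of $\Ddim=0$ categories from \cite{EL}, that $A$ is a product of copies of $\k$, which it is not).

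\textbf{Main obstacle.} The only non-formal step is the lower bound for Dynkin quivers with arrows, since there $\Rdim=0$ does not help. The cleanest resolution is the projective-dimension computation of the diagonal bimodule (as in Lemma~\ref{lemma_ddim1}): for a basic finite-dimensional algebra, $\Ddim(A)=0$ iff $\mathrm{pd}_{A^{\op}\otimes_\k A}A=0$ iff $A$ is separable iff $\gldim A=0$; since a quiver with an arrow has global dimension exactly $1$, equality $\Ddim(\k\Gamma)=1$ follows. I expect the write-up to be short, essentially quoting Lemma~\ref{lemma_ddim1} for the upper bound and this separability characterization for the lower bound.
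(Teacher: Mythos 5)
Your upper bound and the no-arrows case are correct and match the paper: $\Ddim(\k\Gamma)\le\gldim(\k\Gamma)\le 1$ via Lemma~\ref{lemma_ddim1}, and semisimplicity handles the arrowless case. The lower bound, however, is where the paper simply cites the classification of triangulated categories of diagonal dimension zero (\cite[Prop.\ 4.6]{EL}), and that is exactly where your ``cleanest route'' has a genuine gap.

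The unjustified step is the claim that $\Ddim(A)=0$ forces the diagonal bimodule $A$ to be \emph{projective} over $A^{\op}\otimes_\k A$. Rouquier's Lemma 7.2 (used in the proof of Lemma~\ref{lemma_ddim1}) shows that for a basic algebra $\mathrm{pd}_{A^{\op}\otimes_\k A}A=\gldim A$; but that only implies $\Ddim(A)\le\gldim(A)$, because $A\otimes_\k A$ is merely one allowed choice of $F\boxtimes G$, and $\Ddim$ minimizes over \emph{all} $F\in\Perf A^{\op}$, $G\in\Perf A$. A priori some cleverer $F\boxtimes G$ could have $A$ as a summand of a direct sum of its shifts even when $\mathrm{pd}_{A^{\op}\otimes_\k A}A>0$, so ``$\Ddim(A)=0\Rightarrow\mathrm{pd}_{A^{\op}\otimes_\k A}A=0$'' is precisely the nontrivial direction, not a corollary of Rouquier's computation. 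This direction is essentially the content of \cite[Prop.\ 4.6]{EL}, which you mention only in passing as an ``equivalently'' side remark. So your proposed main argument quietly assumes what it needs to prove, while the paper explicitly delegates this step to the companion paper.

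Your alternative route via $\Rdim\le\Ddim$ and Proposition~\ref{prop_nondynkin} is sound but, as you correctly observe, covers only the non-Dynkin case. For Dynkin quivers with at least one arrow, the only valid argument you have available in this framework is the one the paper uses: invoke the classification of $\Ddim=0$ categories. I would replace the separability ``clean way'' with a direct citation of \cite[Prop.\ 4.6]{EL} (noting that $\k\Gamma$ with an arrow is not separable/semisimple), which is exactly the paper's proof.
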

\begin{proof}
It follows from Lemma~\ref{lemma_ddim1} that $\Ddim(\k\Gamma)\le \gldim(\k\Gamma)\le 1$.
%By Corollary~\ref{cor_forest2} we may assume that 
%$\Gamma$ is bipartite:  $\Gamma_0=S\sqcup T$ and any arrow in $\Gamma$ goes from $S$ to %$T$. Then $D^b(\modd \k\Gamma)$ has a full strong exceptional collection of two blocks %$(\{ P_v, v\in S\},\{P_v, v\in T\} )$.
%Now it follows from Lemma~\ref{lemma_ddimexccoll} that $\Ddim(\k\Gamma)\le 1$. 
To finish the proof, we use classification of categories with diagonal dimension zero, see \cite[Prop. 4.6]{EL}. 
\end{proof}

\medskip
We recall a notion of reflections, see \cite{BGP}. 
Let $v\in\Gamma_0$ be a \emph{source} (resp. $u\in\Gamma_0$ be a \emph{sink}):  it means that there are no arrows pointing at $v$ (resp. starting at $u$). Let $s_v^+\Gamma$ (resp. $s_u^-\Gamma$ ) be the quiver obtained from $\Gamma$ by inverting all arrows starting at $v$ (resp. ending at $u$). 
%Similarly, let $u\in\Gamma_0$ be a \emph{sink}: it means that there are no arrows %starting at $u$. Let $s_u^-\Gamma$ be the quiver obtained from $\Gamma$ by inverting %all arrows ending at $u$. 

\begin{theorem}[See \cite{BGP}]
\label{theorem_BGP}
In the above notations the algebras $\k\Gamma$, $\k(s_v^+\Gamma)$  and $\k(s_u^-\Gamma)$ are mutation-equivalent.
\end{theorem}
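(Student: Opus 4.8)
The plan is to produce, for each of the two reflection operations, an explicit strong exceptional collection in a common ambient category that realizes the passage from $\k\Gamma$ to $\k(s_v^+\Gamma)$ (resp. $\k(s_u^-\Gamma)$) by a single composite mutation, and then invoke Definition~\ref{def_mutequiv}. Concretely, take $\TT=\Perf\k\Gamma\cong D^b(\modd\k\Gamma)$ with the standard strong exceptional collection $(P_w)_{w\in\Gamma_0}$ of indecomposable projectives, ordered compatibly with the arrows of $\Gamma$; this realizes $\k\Gamma$ as $\End(\oplus_w P_w)$ by Remark~\ref{remark_mutequiv}(1). Since $v$ is a source of $\Gamma$, the vertex $v$ is minimal in the chosen order, so $P_v$ sits at the left end of the collection, i.e. the collection has the form $(P_v,P_{w_1},\dots,P_{w_{n-1}})$.

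First I would handle the source reflection $s_v^+$. The key observation is that $S_v=P_v$ is simple projective (no arrows point at $v$), and the right mutation of $P_v$ past the rest of the collection has a transparent description: since $P_v=S_v$, we have $\Hom^\bul(P_v,P_w)=\Hom^\bul(S_v,P_w)$, which is concentrated in degree $0$ with dimension equal to the multiplicity of $S_v$ as a composition factor in the top of $P_w$ — that is, the number of arrows from $v$ to $w$ in $\Gamma$. Mutating $P_v$ all the way to the right past the block $(P_{w_1},\dots,P_{w_{n-1}})$ (which one checks is legitimate, and the iterated mutation can be organized using \eqref{eq_mutright} vertex by vertex, or all at once since the relevant $P_w$'s containing $v$ in their support behave like a block for this purpose) produces a new object $P_v'=R_{\dots}(P_v)=\mathrm{Cone}(P_v\to\oplus_w\Hom(P_v,P_w)^*\otimes P_w)[-1]$. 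I would then verify by a direct computation of $\Hom$-spaces that the resulting strong exceptional collection $(P_{w_1},\dots,P_{w_{n-1}},P_v')$ has endomorphism algebra isomorphic to $\k(s_v^+\Gamma)$: the point is that in $\k(s_v^+\Gamma)$ the vertex $v$ becomes a sink, the projective at $v$ becomes simple-injective, and $P_v'$ should be (a shift of) the indecomposable injective $I_v$ over $\k\Gamma$, which is exactly the projective at the new sink $v$ in $s_v^+\Gamma$. This is essentially the statement that tilting by an APR-tilt (Auslander–Platzeck–Reiten) at a simple projective realizes the BGP reflection, reinterpreted as a mutation of the projective exceptional collection.

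The sink reflection $s_u^-$ is dual: here $u$ is a sink, so $S_u=I_u$ is simple injective, $P_u$ sits at the right end of the projective collection, and one left-mutates $P_u$ all the way to the left past $(P_{w_1},\dots,P_{w_{n-1}})$ using \eqref{eq_mutleft}, getting $P_u'=L_{\dots}(P_u)=\mathrm{Cone}(\oplus_w\Hom(P_w,P_u)\otimes P_w\to P_u)$, with $\Hom^\bul(P_w,P_u)=\Hom^\bul(P_w,I_u)$ now governed by arrows from $w$ to $u$; the new collection $(P_u',P_{w_1},\dots,P_{w_{n-1}})$ has endomorphism algebra $\k(s_u^-\Gamma)$, where $u$ has become a source. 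Alternatively — and this might be cleaner to write — the sink case follows from the source case by applying the equivalence $\modd A\to(A\mmod)^{\op}$, $M\mapsto M^*$, which swaps $\Gamma$ with its opposite quiver, swaps sources with sinks, and intertwines $s_v^+$ on $\Gamma$ with $s_v^-$ on $\Gamma^{\op}$; since mutation-equivalence is clearly stable under passing to opposite algebras (dualize the whole exceptional collection), the two statements are equivalent.

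I expect the main obstacle to be the bookkeeping in the second and third paragraphs: one must check carefully that the iterated mutation past the intermediate projectives is well-defined at each step (strongness / $\Ext$-vanishing must be preserved), that the cones computed by \eqref{eq_mutleft}--\eqref{eq_mutright} are honest modules up to shift rather than genuine complexes, and — the real content — that the endomorphism algebra of the mutated collection is $\k(s_v^+\Gamma)$ on the nose, with the correct arrows and (absence of) relations. The cleanest route to the last point is to identify the mutated object explicitly: show $R_{\dots}(P_v)\cong I_v$ (up to shift) in the source case and $L_{\dots}(P_u)\cong I_u$ in the sink case, and then recognize $\{I_v\}\cup\{P_w : w\neq v\}$ as the set of indecomposable projectives of $\k(s_v^+\Gamma)$. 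Everything else is the standard dictionary between projective exceptional collections and quivers recalled in Section~\ref{section_prelim}, together with Proposition~\ref{prop_tilting}. Since the statement is classical (\cite{BGP}) one may also simply cite it, but the mutation-theoretic proof sketched here is what makes the word ``mutation-equivalent'' literally true in the sense of Definition~\ref{def_mutequiv}.
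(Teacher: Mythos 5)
Your overall plan — start from the projective collection $(P_w)_{w\in\Gamma_0}$ and mutate $P_v$ past the rest, then read off the endomorphism algebra — is exactly what the paper does, and your observation that this is the exceptional-collection avatar of the APR tilt is on target. But the explicit description of the mutated object contains two linked errors, and they are not just bookkeeping.

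First, $\dim\Hom(P_v,P_w)=(P_w)_v=A_{wv}$ is the number of \emph{paths} from $v$ to $w$, not the number of arrows; and the other projectives $P_w$, $w\ne v$, do not form a block (they carry nontrivial Homs between themselves whenever $\Gamma$ has a path of length $\ge 2$), so formula \eqref{eq_mutright} does not apply ``all at once.'' Already for $\Gamma=1\to2\to3$ and $v=1$, your formula gives $\mathrm{Cone}(P_1\to P_2\oplus P_3)[-1]$, whereas the genuine iterated mutation is $R_{P_3}(R_{P_2}(P_1))=S_2[-1]$; these have different cohomology. The paper's cone
\[
P'_v=\mathrm{Cone}\Bigl(P_v\to\bigoplus_{a\in\Gamma_1:\,s(a)=v}P_{t(a)}\Bigr),
\]
summed over \emph{arrows} leaving $v$ (with multiplicity), is the correct object; it is the cokernel of the almost split (irreducible) map out of the simple projective, i.e.\ $\tau^{-1}(P_v)$.

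Second, and relatedly, the identification $R_{P_w,w\ne v}(P_v)\cong I_v$ up to shift is false. For a full exceptional collection $(E_1,\dots,E_n)$, moving the first object all the way right lands you in ${}^\perp\langle E_2,\dots,E_n\rangle=S^{-1}\bigl(\langle E_2,\dots,E_n\rangle^\perp\bigr)=S^{-1}\langle E_1\rangle$, so the total right mutation is a shift of $S^{-1}(E_1)$, not of $S(E_1)$. Hence $R_{\dots}(P_v)\cong S^{-1}(P_v)[m]=\tau^{-1}(P_v)[m-1]$, not $I_v[m]$. (A sanity check: for $\Gamma=1\to2$, $\tau^{-1}(P_1)=S_2$ while $I_1=P_2$.) Your corresponding claim in the sink case, $L_{\dots}(P_u)\cong I_u$, is actually correct, because the total \emph{left} mutation of the last object is a shift of $S$ of it and $S(P_u)=I_u$; but the one-shot cone formula there is wrong for the same reason. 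To repair the proof, replace your cone by the paper's one (sum over arrows at $v$, not over all reachable vertices), do the mutation vertex by vertex to justify that the iterated $R$ really collapses to this single cone, and identify the mutated object as $\tau^{-1}(P_v)$ rather than $I_v$; after that the verification that $\End(\oplus P'_i)\cong\k(s_v^+\Gamma)$ goes through, and your duality reduction of the sink case to the source case is a perfectly good way to finish.
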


\begin{proof}
We sketch the proof for $\k(s_v^+\Gamma)$. Let $P_i$, $i\in\Gamma_0$ be the indecomposable projective modules. 
Denote 
$$P'_v:=Cone\left(P_v\to \bigoplus_{a\in\Gamma_1: s(a)=v}P_{t(a)}\right).$$
Let $P'_i:=P_i$ for $i\ne v$.
Consider the collection of objects $(P'_i)_{i\in\Gamma_0}$ in $D^b(\modd \k\Gamma)$ with $P'_v$ put the last.
One checks that this collection is full and  strong exceptional and 
$\End(\oplus_i P'_i)\cong \k(s_v^+\Gamma)$. Also one checks that $P'_v=R_{P_i, i\ne v}(P_v)[1]$ (see \eqref{eq_mutright}), therefore the algebra $\k(s_v^+\Gamma)$ is mutation-equivalent to $\k\Gamma$ by definition.
\end{proof}

Operations $s^+$ and $s^-$ are called \emph{reflections}. The following corollary will be needed for studying examples in Section~\ref{section_powers}.

\begin{corollary}
\label{cor_forest}
Let $\Gamma$ be a tree and $\Gamma'$ be a quiver obtained from $\Gamma$ be inverting some arrows. Then $\k\Gamma$ is mutation-equivalent to $\k\Gamma'$. In particular,  $\k\Gamma$ is derived Morita-equivalent to $\k\Gamma'$.
\end{corollary}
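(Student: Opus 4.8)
The plan is to reduce the statement about inverting an arbitrary set of arrows to a sequence of single-arrow inversions, and to realize each single-arrow inversion as a composition of the elementary reflections $s^+$ and $s^-$ provided by Theorem~\ref{theorem_BGP}. First I would observe that since mutation-equivalence is an equivalence relation (it is clearly reflexive and symmetric by reversing mutations, and transitive by composing the ambient categories via Proposition~\ref{prop_tilting}), it suffices to invert the arrows of $\Gamma$ one at a time: if we can pass from a tree $\Gamma$ to the quiver $\Gamma_a$ obtained by reversing a single arrow $a$, then iterating handles any subset of arrows.

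The key point is that in a \emph{tree} every arrow $a\colon u\to v$ is a bridge: deleting it disconnects $\Gamma_0$ into two pieces, $U$ (the component containing $u$) and $W$ (the component containing $v$). The subquiver on $U$ is itself a tree, so by induction on the number of vertices I may assume I can reorient arrows within $U$ freely, and in particular I can reorient all arrows of $U$ so that $u$ becomes a sink of the induced subquiver on $U$; since $a$ is the only arrow joining $U$ and $W$, and it points out of $u$, this makes $u$ a source of the whole quiver $\Gamma$ (no arrows point at $u$). Now applying the reflection $s_u^+$ of Theorem~\ref{theorem_BGP} reverses exactly the arrows starting at $u$ — but within $U$ we arranged that the only such arrow is $a$ — so $s_u^+$ reverses $a$ and possibly nothing else relevant; then I reorient the arrows of $U$ back to their original directions (again using the inductive hypothesis on the tree $U$). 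The net effect is the desired quiver $\Gamma_a$, and every step was a mutation-equivalence.

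The main obstacle is bookkeeping: making precise the inductive reorganization of arrows inside the subtree $U$ so that $u$ genuinely becomes a source of all of $\Gamma$, and then reversing that reorganization afterwards, without accidentally disturbing the orientation of $a$ a second time. One has to be a little careful that applying $s_u^+$ when $u$ has several outgoing arrows within a larger tree reverses all of them, which is why the reduction to a single-arrow bridge and the preliminary step of isolating $a$ as the unique arrow at $u$ is essential. A cleaner alternative, which I would actually prefer to write, is to induct directly on the pair (number of vertices, number of arrows pointing the ``wrong'' way): pick a source $v$ of $\Gamma$ (a tree always has one), apply $s_v^+$ or not according to whether we want its arrows flipped, pass to the smaller tree obtained by deleting $v$, invoke the inductive hypothesis there, and reassemble using transitivity of mutation-equivalence. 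Finally, the ``in particular'' clause is immediate from Remark~\ref{remark_mutequiv}(3), which says mutation-equivalent algebras are derived Morita-equivalent.
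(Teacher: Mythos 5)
The paper's own proof of this corollary is a one-line citation: it simply invokes the classical fact that any two orientations of a tree are connected by a sequence of source/sink reflections. You try to actually prove this combinatorial fact, which is more thorough than the paper, but there is a genuine gap in the execution.

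The crux is that a reflection $s_u^{\pm}$ is \emph{all-or-nothing} at the vertex $u$: it reverses every arrow incident to $u$, not a chosen one. Your main approach runs into this directly. You arrange for $u$ to be a \emph{sink} of $U$ so that $a$ is the only arrow starting at $u$, and then claim this makes $u$ a \emph{source} of $\Gamma$. That is inconsistent: if $u$ is a sink of $U$, the arrows of $U$ incident to $u$ point \emph{into} $u$, so $u$ is not a source of $\Gamma$ and $s_u^+$ is not applicable. Switching ``sink'' to ``source'' restores applicability of $s_u^+$, but then $s_u^+$ flips \emph{all} of $u$'s arrows, not just $a$, so the claim ``the only such arrow is $a$'' is wrong. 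The ``reorient $U$ back'' step that follows also has a lifting problem: the inductive hypothesis lets you reorient $U$ as a standalone tree, but a reflection at the cut-vertex $u$ inside $U$ need not be a valid reflection of the full quiver $\Gamma$ (the edge $a$ may prevent $u$ from being a source or sink of $\Gamma$), so the intermediate steps are not all reflections of $\Gamma$. Your ``cleaner alternative'' has the same two issues: ``apply $s_v^+$ or not'' is a binary choice and cannot produce a configuration at $v$ where some arrows are flipped and some are not, and the ``reassemble'' step silently conflates ``a sequence of reflections of $T\setminus v$'' with ``a sequence of reflections of $T$ that fixes $v$,'' which is precisely where care is needed.

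The correct version of this argument uses a \emph{leaf} of the tree rather than a source. A leaf $v$ has valence one, so it is automatically a source or a sink, and $s_v^{\pm}$ reverses exactly the single arrow $a$ at $v$ — this is what makes the one-arrow-at-a-time strategy work. Induct on the number of vertices: pick a leaf $v$ with neighbor $w$; by induction there is a sequence of reflections of $T\setminus v$ taking $\Gamma|_{T\setminus v}$ to $\Gamma'|_{T\setminus v}$; lift these one at a time to $T$, and whenever the reflection is at $w$ and the current orientation of $a$ would prevent $w$ from being a source/sink of $T$, first reflect at the leaf $v$ to flip $a$; finally adjust $a$ with one more leaf reflection if needed. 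Everything else you say — reduction to single-arrow flips by transitivity, the equivalence-relation bookkeeping, and the ``in particular'' clause via Remark~\ref{remark_mutequiv}(3) — is fine.
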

\begin{proof}
It follows from Theorem~\ref{theorem_BGP} since  $\Gamma'$ can be obtained from $\Gamma$ by several reflections.
\end{proof}

%In other words, for $\Gamma$ a tree, the derived category $D^b(\modd \k\Gamma)$
%does not depend (up to an equivalence) on the orientation of arrows.

%\begin{corollary}
%\label{cor_forest2}
%Let $\Gamma$ be a tree. Then $\k\Gamma$ is mutation-equivalent to $\k\Gamma'$, where %$\Gamma'$ is a bipartite tree.
%\end{corollary}
%\begin{proof}
%It follows from Corollary~\ref{cor_forest}.
%\end{proof}

\section{Path algebras of ordered quivers with relations}
\label{section_pathrels}

Let $\Gamma$ be an ordered quiver and $A=\k\Gamma/I$ be a path algebra \textbf{with relations}. In this section we present some general facts about dimension of such algebras.

In contrast with path algebras without relations, the lower and upper Serre dimension may be not equal, as examples 
from Section~\ref{section_other} demonstrate. 
%Further, lower Serre dimension is not monotonous in semi-orthogonal decompositions, as %Example \ref{example_2} shows. 
What is good, Serre dimensions of such algebras are positive.

\begin{prop}
\label{prop_positive}
Let $A$ be a path algebra of a  nontrivial connected ordered quiver $\Gamma$ modulo some relations. Then $\LSdim(A)>0$. To be more precise,  let $l(\Gamma)$ be the length of~$\Gamma$, then
$\LSdim(A)\ge \frac 1{r}$, where $r=l(\Gamma)+2$.
\end{prop}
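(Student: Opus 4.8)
The plan is to show that the Serre bimodule tensor powers $(A^*)^{\otimes^L_A m}$ cannot have cohomology concentrated in degrees too close to zero; quantitatively, I want to bound $\sup (A^*)^{\otimes^L_A m}$ from above by something growing linearly in $m$, which via \eqref{eq_infsup} gives a positive lower bound for $\LSdim(A)$. The natural strategy is to study what the Serre functor $S(X)=X\otimes^L_A A^*$ does to the indecomposable projective modules $P_v$, and to iterate. Since $A=\bigoplus_v P_v$, we have $(A^*)^{\otimes^L_A m}=\bigoplus_v S^m(P_v)$ as right $A$-modules (with the bimodule structure remembered), so it suffices to bound $\sup S^m(P_v)$ for each vertex $v$.

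**The key computation.** Recall from \eqref{eq_SPI} that $S(P_v)\cong I_v$, the indecomposable injective at $v$. The heart of the argument should be a statement of the form: applying $S$ raises cohomological degrees by at most a bounded amount, but applying $S$ roughly $r = l(\Gamma)+2$ times must shift the whole complex strictly downward. More precisely, I would first establish the base-level fact that $S(P_v)=I_v$ is a genuine module (in degree $0$), and that $S$ of any module $M$ lives in degrees $\ge -\gldim(A)\ge -l(\Gamma)$ since $\gldim(A)\le l(\Gamma)$. Then I would track $\sup$: the claim I'd aim for is that over a window of $r$ consecutive applications of $S$, the supremum of cohomological degree drops by at least $1$. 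If I can show $\sup S^{r}(P_v)\le \sup S^{0}(P_v) - 1 = -1$ for every $v$ (or more robustly, $\sup S^{jr}(A)\le -j$), then $\sup (A^*)^{\otimes^L_A jr}\le -j$, hence $\frac{-\sup (A^*)^{\otimes^L_A jr}}{jr}\ge \frac1r$, and taking $j\to\infty$ along this subsequence — together with the fact that the relevant limit exists by \eqref{eq_infsup} — yields $\LSdim(A)\ge \frac1r$.

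**Why the drop happens.** The mechanism I expect to exploit is that $S(P_v)=I_v$ is not projective (here is where non-triviality of $\Gamma$, i.e. the existence of arrows, enters: for a quiver with at least one arrow some $I_v$ fails to be projective), and once we are away from the projectives, further applications of $S$ genuinely increase cohomological amplitude in the negative direction, as in the proof of Proposition~\ref{prop_nonDynkinSdim}. The subtlety with relations is that $S$ need not send modules to modules — $S(I_v)$ can be a genuine complex spread over degrees $[-l(\Gamma),0]$. So the bookkeeping is: after the first application we land in a module in degree $0$; each subsequent application can only push the top cohomology down or keep it, and it must push it down within $r$ steps because otherwise a nonzero summand of $S^{k}(P_v)$ in degree $0$ would have to be of a very restricted form (an injective that is also "eventually projective" under $S$), which the length bound $l(\Gamma)$ precludes. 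I'd make this precise by a filtration/dévissage argument on the finitely many indecomposables appearing, combined with the global dimension bound.

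**Main obstacle.** The genuinely hard part is controlling the top cohomology of iterated $S$ in the presence of relations — ensuring the ``drop within $r$ steps'' claim, since $S$ is not exact on modules and the Auslander–Reiten-theoretic dichotomy (preprojective vs.\ preinjective) used in the relation-free case is unavailable. I expect the argument to require a careful analysis of where the degree-$0$ part of $S^m(A)$ can live: one must show that a module $M$ with $S^m(M)$ having nonzero cohomology in degree $0$ for all $m$ in a length-$r$ window forces a projective-injective pattern incompatible with $0<\gldim(A)\le l(\Gamma)$. Identifying the right invariant to induct on (perhaps the projective dimension of the top cohomology module, or a ``distance to the injectives'' statistic bounded by $l(\Gamma)$) is where the real work lies; everything else is the formal passage through \eqref{eq_infsup}.
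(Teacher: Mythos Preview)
Your reduction is sound: the whole problem does come down to showing $\sup\bigl((A^*)^{\otimes^L_A r}\bigr)\le -1$, after which submultiplicativity of $\sup$ under $\otimes^L_A$ gives $\sup\bigl((A^*)^{\otimes^L_A rk}\bigr)\le -k$ and \eqref{eq_infsup} finishes. However, the mechanism you propose for that key inequality --- tracking top cohomology of $S^m(P_v)$ via an Auslander--Reiten style dichotomy, searching for an inductive invariant like projective dimension of the top cohomology --- is not how the paper proceeds, and your own ``main obstacle'' paragraph correctly identifies that you do not actually have an argument there. For quivers with relations no such preprojective/preinjective separation is available, and the analysis you sketch would be delicate at best.

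The paper bypasses all of this with one observation you are missing: the condition $\sup\bigl((A^*)^{\otimes^L_A r}\bigr)\le -1$ is exactly $H^0\bigl((A^*)^{\otimes^L_A r}\bigr)=0$, and the degree-zero cohomology of an iterated derived tensor product is the \emph{underived} tensor product,
\[
H^0\bigl((A^*)^{\otimes^L_A r}\bigr)=(A^*)^{\otimes_A r}.
\]
So the entire problem reduces to an ordinary bimodule computation: show $(A^*)^{\otimes_A r}=0$ for $r=l(\Gamma)+2$. The paper does this combinatorially (Lemma~\ref{lemma_MMM}). Writing $(A^*)_{ij}=e_iA^*e_j=(A_{ji})^*$ and $e^i\in (A^*)_{ii}$ for the element dual to $e_i$, one checks two identities in $A^*\otimes_A A^*$: first $e^i\otimes f=f\otimes e^j$ for any $f\in (A^*)_{ij}$, and second $e^i\otimes e^i=0$ (this uses connectedness and nontriviality of $\Gamma$). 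Now any homogeneous tensor $f_1\otimes\cdots\otimes f_r$ with $f_k\in (A^*)_{i_k j_k}$ is nonzero only if $j_k=i_{k+1}$ and $i_1\le i_2\le\cdots\le i_r\le j_r$; since a chain in $\Gamma_0$ has at most $l(\Gamma)+1$ vertices, among $r=l(\Gamma)+2$ indices two of the $f_k$ must be (scalar multiples of) some $e^{i_k}$. The first identity lets you slide these together, the second kills the tensor. No homological bookkeeping is needed.
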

\begin{proof}
Clearly, $H^0((A^*)^{\otimes_A^Lr})=(A^*)^{\otimes_Ar}=0$, where the last equality is by Lemma~\ref{lemma_MMM} below. 
Then $\sup((A^*)^{\otimes_A^Lr})\le -1$ and hence $\sup((A^*)^{\otimes_A^Lrk})\le -k$. Now the claim follows from \eqref{eq_infsup}.
\end{proof}

\begin{remark}
There exist finite-dimensional algebras of finite global dimension which are not  Morita-equivalent (and moreover are not derived Morita-equivalent) to a path algebra with relations in an ordered quiver. Indeed, the algebra $A$ from Example~\ref{example_2} has  $\LSdim(A)=0$ what contradicts to Proposition~\ref{prop_positive}.
\end{remark}

\begin{lemma}
\label{lemma_MMM}
Let $A$ be a path algebra of a  nontrivial connected ordered quiver $\Gamma$ modulo some relations. Let $l(\Gamma)$ be the length of $\Gamma$. Then one has
$$(A^*)^{\otimes_Ar}=A^*\otimes_A\ldots \otimes_AA^*=0,$$
where $r=l(\Gamma)+2$.
\end{lemma}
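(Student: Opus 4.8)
The plan is to compute the ordinary (underived) tensor power $(A^*)^{\otimes_A r}$ directly using the description of $A^*$ as a sum of injective right modules and a ``weight'' bookkeeping for paths in $\Gamma$. Recall $A^* = \bigoplus_{v\in\Gamma_0} I_v$ as a right module, where $I_v = Q_v^*$, and as an $A^{\op}\otimes A$-bimodule $A^*$ decomposes into pieces $e_u A^* e_v$ with $e_u A^* e_v \cong (A_{vu})^* = (e_v A e_u)^*$. So computing $(A^*)^{\otimes_A r}$ amounts to understanding, for each sequence $v_0, v_1, \dots, v_r$ of vertices, the composite
$$(A_{v_1 v_0})^* \otimes_{\k} (A_{v_2 v_1})^* \otimes_{\k} \cdots \otimes_{\k} (A_{v_r v_{r-1}})^*$$
after imposing the relations coming from the bimodule structure (the middle $A$-actions). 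The key point is that $e_w A^* e_v = (A_{vw})^* $ is nonzero only when $w \geq v$ in the chosen order on $\Gamma_0$ (since $A_{vw} = e_v A e_w$ is a space of paths from $w$ to $v$, which vanishes when $v < w$, i.e.\ is nonzero only for $v \geq w$ — here I must be careful about source/target conventions and will fix the direction so that a nonzero contribution forces a monotonicity constraint on the $v_i$).

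First I would set up the length filtration: since $\Gamma$ has length $l := l(\Gamma)$, the vertices divide into $l+1$ groups $\Gamma_{0,0}, \dots, \Gamma_{0,l}$ with every arrow going from a strictly lower group to a strictly higher one, hence $A_{uv} \neq 0$ only if the group index of $v$ is $\leq$ that of $u$ (with equality allowed only for the length-$0$ idempotent piece $u=v$). Next I would trace through what the $A$-bimodule structure on $A^* = \operatorname{Hom}_\k(A,\k)$ does to a tensor factor $e_{v_{i-1}} A^* e_{v_i}$: the right $A$-action on the $i$-th factor is glued to the left $A$-action on the $(i+1)$-st, and — this is the crucial computation — the only way to get a nonzero element of the tensor product over $A$ is that at each ``junction'' the relevant composition in $A$ is nontrivial, which forces the group index to \emph{strictly} decrease, OR the junction vertex to be a source/sink-type degenerate case. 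Counting: a sequence $v_0, v_1, \dots, v_r$ producing a nonzero summand can pass through at most $l+1$ distinct group levels and can stay at a given level for at most one ``step'' before being forced down (because $A_{vv} = \k e_v$ and the dual pairing with $e_v$ is what kills a repeated level after tensoring). With $r = l+2 = (l+1) + 1$ factors we have one factor too many to fit, so every summand vanishes.

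The cleanest way to make the counting rigorous is probably to prove the stronger statement by induction on $r$ simultaneously with keeping track of the ``top'' and ``bottom'' group levels appearing, i.e.\ show $(A^*)^{\otimes_A k} e_v$, as a right module, is supported (has $e_w$-component) only on vertices $w$ whose group index is at most $l - k + 1$ below that... — more precisely, show by induction that $e_u (A^*)^{\otimes_A k} e_v \neq 0$ forces $\operatorname{grp}(u) \leq \operatorname{grp}(v) - (k-1)$ together with the boundary subtlety, using at the inductive step that $I_w \otimes_A A^* = e_w A^* \otimes_A A^*$ and that $e_w A^* e_{w'} \neq 0 \Rightarrow \operatorname{grp}(w') \leq \operatorname{grp}(w)$ with strict inequality unless $w' = w$ and the $e_w$-weight behaves degenerately. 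Since $\operatorname{grp}$ takes values in $\{0,1,\dots,l\}$, a chain of $r = l+2$ strict-ish drops is impossible, giving $(A^*)^{\otimes_A r} = 0$. The main obstacle I anticipate is the bookkeeping at the ``non-strict'' junctions — handling the length-$0$ (idempotent) pieces $A_{vv} = \k e_v$ correctly so that the drop is genuinely by $l+1$ total and not accidentally off by one; this is exactly why the sharp bound is $r = l(\Gamma) + 2$ rather than $l(\Gamma)+1$, and getting that constant right is the only delicate part. Connectedness and nontriviality of $\Gamma$ ensure $l(\Gamma) \geq 1$ and that $A^* \neq 0$, so the statement is not vacuous; I would note this at the end.
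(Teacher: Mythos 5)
Your setup agrees with the paper's in outline: decompose $A^*$ into bimodule components $(A^*)_{ij}=e_iA^*e_j=(A_{ji})^*$, observe these vanish unless $i\le j$, introduce the level (group) filtration on $\Gamma_0$, and try to win by pigeonhole since $r=l(\Gamma)+2$ exceeds the number of levels. Up to the sign confusion (which you flag yourself), this is the paper's framework. However, the core of the argument is missing, and the specific claims you lean on are either unproven or false as stated.

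The central problem is your assertion that a nonzero element of $(A^*)^{\otimes_A r}$ ``forces the group index to strictly decrease at each junction, or hit a degenerate case,'' equivalently your proposed inductive claim that $e_u(A^*)^{\otimes_A k}e_v\neq 0$ forces $\mathrm{grp}(v)-\mathrm{grp}(u)\ge k-1$. This is not true junction-by-junction: for $\Gamma=A_2$ one has $e^0\otimes a^*=a^*\otimes e^1\neq 0$ in $A^*\otimes_A A^*$, a pure tensor whose first factor $e^0\in(A^*)_{00}$ is a ``trivial step'' with no drop in level. What actually happens is that the number of trivial steps is at least two, and the presence of \emph{two} trivial factors can be converted into a vanishing --- but this requires the identity $e^i\otimes f=f\otimes e^j$ for $f\in(A^*)_{ij}$, $i<j$ (which lets you \emph{slide} a trivial factor $e^{i_p}$ past the intermediate nontrivial factors until it abuts the other trivial factor $e^{i_q}$), combined with the vanishing $e^i\otimes e^i=0$ in $A^*\otimes_A A^*$ (which uses connectedness and nontriviality of $\Gamma$: pick an arrow incident to $i$ and compute). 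Neither of these identities appears in your proposal; the phrase ``the dual pairing with $e_v$ is what kills a repeated level'' gestures at the second but gives no argument, and the first is absent entirely. Without the sliding identity, your proposed induction breaks precisely at the step where the last tensor factor is $e^v$ (i.e.\ when the intermediate vertex $w$ equals $v$): the inductive hypothesis then gives only $\mathrm{grp}(v)\ge \mathrm{grp}(u)+(k-1)$, not the required $+k$, and you have no mechanism to rule this case out.

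To repair the argument you would need to prove, for $f\in(A^*)_{ij}$ with $i<j$ and $a\in A_{ji}$ with $f(a)=1$, that $f\cdot a=e^i$ and $a\cdot f=e^j$; deduce the sliding identity $e^i\otimes f=f\otimes e^j$ in $A^*\otimes_A A^*$; deduce $e^i\otimes e^i=0$ using connectedness and nontriviality; and then run the counting argument on a pure homogeneous tensor $f_1\otimes\cdots\otimes f_r$ with $f_k\in(A^*)_{i_kj_k}$, $j_k=i_{k+1}$: at least two indices $p<q$ have $i_p=j_p$ and $i_q=j_q$, and sliding the resulting $e^{i_{p+1}}$ rightward until it meets $e^{i_q}$ produces $e^{i_q}\otimes e^{i_q}=0$. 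That is exactly what the paper does; as written, your proposal skips the algebraic content and asserts a too-strong (and false) junction-level constraint.
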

\begin{proof}
Recall that by assumptions $\Gamma_0$ is partially ordered such that for any arrow $a\in\Gamma_1$ one has $s(a)<t(a)$. Also recall that we treat $A^*=\Hom_{\k}(A,\k)$ as an $A^{\op}\otimes A$-module with the multiplication
$$(a\cdot f\cdot b)(x):=f(bxa), \quad\text{where}\quad
a,b,x\in A, f\in A^*.$$
Denote $(A^*)_{ij}:=e_i\cdot A^*\cdot e_j$ for any $i,j\in\Gamma_0$.
One can easily see that $(A^*)_{ij}=(A_{ji})^*$, thus $(A^*)_{ij}=0$ unless $i\le j$.
Clearly, for $f\in (A^*)_{ij}$ and $a\in A_{kl}$ one has
$$fa\in (A^*)_{il},\qquad fa=0 \quad\text{if}\quad j\ne k, $$ 
and similarly
$$af\in (A^*)_{kj},\qquad af=0 \quad\text{if}\quad l\ne i.$$ 
Denote by $e^i\in (A^*)_{ii}$ the element dual to $e_i\in A_{ii}$.

Let $i<j$ be vertices and $f\in (A^*)_{ij}$,  $a\in A_{ji}$ be such that $f(a)=1$. Then 
\begin{equation}
\label{eq_affa}
f\cdot a=e^i,\qquad a\cdot f=e^j.
\end{equation}
Indeed,
$e_k(fa)e_l=(e_k f)\cdot(ae_l)=0$ unless $k=i$ and $i=l$, therefore $fa\in (A^*)_{ii}$. 
Now $(fa)(e_i)=f(ae_i)=f(a)=1$, hence $fa=e^i$. Similarly $af=e^j$.

We claim that for any $f\in (A^*)_{ij}$ one has
\begin{equation}
\label{eq_effe}
e^i\otimes f=f\otimes e^j\in A^*\otimes_AA^*.
\end{equation}
Indeed, choose $a\in A_{ji}$ such that $f(a)=1$. Then we have
$$
e^i\otimes f=fa\otimes f=f\otimes af=f\otimes e^j,
$$
where the first and the last equalities are (\ref{eq_affa}), while the middle one is by the definition of $\otimes_A$.

Also we claim that for any $i\in\Gamma_0$
\begin{equation}
\label{eq_eiei}
e^i\otimes e^i=0\in A^*\otimes_AA^*.
\end{equation}
Indeed, since $\Gamma$ is connected and nontrivial, there exists an arrow $a\in\Gamma_1$ such that either $s(a)=i$ or $t(a)=i$.
In the first case let $j:=t(a)$, choose $f\in (A^*)_{ij}=(A_{ji})^*$ such that $f(a)=1$.
We have 
$$e^i\otimes e^i=fa\otimes e^i=f\otimes ae^i=0,$$
where the first equality is by (\ref{eq_affa}) and the third one is because $ae^i\in (A^*)_{ji}=0$ (as $j>i$).
The case $t(a)=i$ is treated similarly.

Now we check that $(A^*)^{\otimes_A r}=0$. Indeed, consider any nonzero element
$$f=f_1\otimes f_2\otimes \ldots \otimes f_r\in (A^*)^{\otimes_A r}.$$
We can suppose that $f_k$ are homogeneous: $f_k\in (A^*)_{i_kj_k}$.

First, we have $j_k=i_{k+1}$ for all $k=1,\ldots,r-1$.  Indeed, otherwise
$$f_k\otimes f_{k+1}=f_ke_{j_k}\otimes e_{i_{k+1}}f_{k+1}=f_k\otimes e_{j_k}e_{i_{k+1}}f_{k+1}=0.$$
Further, we can assume that $i_1\le i_2\le \ldots \le i_r\le j_r$ (otherwise some $f_k=0$).
Since the maximal length of a path in $\Gamma$ is $l(\Gamma)$ and $r=l(\Gamma)+2$, it follows that for some $p<q$, we have $i_p=j_p$ and $i_q=j_q$.  Then (up to a constant) $f_p=e^{i_{p+1}}$ and $f_q=e^{i_q}$.

Now we have 
\begin{multline*}
f_p\otimes f_{p+1}\otimes\ldots\otimes f_{q-1}\otimes f_q=
e^{i_{p+1}}\otimes f_{p+1}\otimes\ldots\otimes f_{q-1}\otimes e^{i_q}=\\
=f_{p+1}\otimes e^{i_{p+2}}\otimes f_{p+2} \otimes\ldots\otimes f_{q-1}\otimes e^{i_q}=
\ldots = 
f_{p+1}\otimes f_{p+2} \otimes\ldots\otimes f_{q-1}\otimes e^{i_q}\otimes e^{i_q}=0.
\end{multline*}
where we use (\ref{eq_effe}) and \eqref{eq_eiei}.
It follows now that $f=0$.
\end{proof}

\begin{remark}
We point out that the ``nilpotence degree'' $r=l(\Gamma)+2$  of the Serre bimodule~$A^*$ provided by Lemma~\ref{lemma_MMM}   is in some cases the minimal possible. For example, let $A$~be the path algebra of the linearly oriented quiver 
$$\Gamma_n:\qquad 1\xra{d} 2\xra{d}\ldots \xra{d} n$$ 
with relations $d^2=0$. Then one has $S(P_i)\cong I_i\cong P_{i+1}$ for any $i=1,\ldots, n-1$.
It follows that $S^n(P_1)\cong I_n$ and $(A^*)^{\otimes_A n}\ne 0$. Here we have $l(\Gamma_n)=n-1$.

At the same time, the bound $\LSdim(A)\ge \frac 1r$ is almost never exact. In the above example we have $\LSdim(A)=\USdim(A)=\frac{n-1}{n+1}$ since $A$ is derived Morita-equivalent to $\k\Gamma_n$, see Proposition~\ref{prop_DynkinCY}.
\end{remark}

\medskip

For Rouquier dimension and diagonal dimension we have  obvious 
\begin{prop}
Let $\Gamma$ be an ordered quiver of length $n$.
Let $A$ be a path algebra of~$\Gamma$ modulo some relations. Then $\Rdim(A)\le \Ddim(A)\le \gldim(A)\le n$.
\end{prop}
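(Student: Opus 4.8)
The plan is to obtain the chain of inequalities by concatenating three results already available in the text, after checking that their hypotheses apply to $A=\k\Gamma/I$ for $\Gamma$ an ordered quiver.

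First I would record the basic structural facts about $A$: since $\Gamma$ is ordered it has no oriented cycles, so $A$ is finite-dimensional; and the quotient map $\k\Gamma\to A$ restricts to an isomorphism on $(\k\Gamma)_0\oplus(\k\Gamma)_1$, so $A/R(A)\cong\k^{\Gamma_0}$, i.e.\ $A$ is basic. I would also invoke the fact recalled in Section~\ref{section_prelim} that a path algebra with relations in a quiver of length $l$ has global dimension $\le l$; applied to $\Gamma$ of length $n$ this gives the rightmost inequality $\gldim(A)\le n$, and in particular $\gldim(A)<\infty$.

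Next I would assemble the three steps. The leftmost inequality $\Rdim(A)\le\Ddim(A)$ is exactly Proposition~\ref{prop_rdimddim}. The middle inequality $\Ddim(A)\le\gldim(A)$ is Lemma~\ref{lemma_ddim1}, whose hypothesis ($A$ a finite-dimensional basic $\k$-algebra) was verified in the previous step. The rightmost inequality was already established above. Stringing these together yields $\Rdim(A)\le\Ddim(A)\le\gldim(A)\le n$.

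There is no real obstacle here — the content is entirely a bookkeeping of cited statements, which is why the paper labels the proposition ``obvious''; the only point deserving an explicit sentence is the verification that $A$ is basic and finite-dimensional so that Lemma~\ref{lemma_ddim1} applies. If desired, one could also remark that all quantities involved are derived Morita invariants, but that plays no role in the argument itself.
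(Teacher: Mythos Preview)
Your proposal is correct and follows exactly the paper's own argument: the paper's proof simply cites Proposition~\ref{prop_rdimddim} and Lemma~\ref{lemma_ddim1}, leaving the bound $\gldim(A)\le n$ implicit from Section~\ref{section_prelim}. Your only addition is the explicit verification that $A$ is basic and finite-dimensional so that Lemma~\ref{lemma_ddim1} applies, which is a welcome clarification but not a different approach.
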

\begin{proof}
It follows from Proposition~\ref{prop_rdimddim} and Lemma~\ref{lemma_ddim1}. 
%Indeed, the category
%$\Perf  A$ has the full exceptional collection $(P_i)_{i\in\Gamma_0}$ of projective %$A$-modules. This collection has the block structure such that the number of blocks is %$n+1$. Indeed, decompose $\Gamma_0=\sqcup_{i=0}^n \Gamma_{0,i}$ such that any arrow %goes from $\Gamma_{0,i}$ to
%$\Gamma_{0,j}$ with $i<j$. Then for any $i$ the projective modules $P_v$, $v\in %\Gamma_{0,i}$ form a block of orthogonal objects.
\end{proof}

\medskip
We will need the following lemma for studying our examples in Sections \ref{section_powers} and~\ref{section_other}.
\begin{lemma}
\label{lemma_timesdyn}
Let $\Gamma$  be an ordered quiver of length $n$,  and $A=\k\Gamma/I$ for some ideal of relations. Let  $\Delta$ be a  Dynkin quiver. Then $\Rdim(A\otimes \k\Delta)\le n$.
\end{lemma}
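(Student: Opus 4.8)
The plan is to reduce the statement about $\Rdim(A \otimes \k\Delta)$ to the known fact that Dynkin path algebras have Rouquier dimension zero. First I would recall the general principle, already implicit in Lemma~\ref{lemma_ddim1} and the relation $\Rdim \le \Ddim$, that $\Rdim(A \otimes B)$ is controlled by $\Ddim$ of one factor and $\Rdim$ of the other. More precisely, if $B$ is smooth then the diagonal $B^{\op}\otimes_\k B$-bimodule $B$ lies in $\langle F \boxtimes G\rangle_{\Ddim(B)}$ for suitable $F, G$; tensoring this resolution over $B$ with a classical generator of $\Perf(A\otimes B)$ shows that $\Perf(A\otimes B)$ is generated in $\Ddim(B)+1$ steps by an object built from a classical generator of $\Perf(A\otimes B)$ together with the $B$-side data — hence one gets a bound like $\Rdim(A\otimes B) \le \Rdim(A) + \Ddim(B) + 1$ or, sharpening, an inequality that already appears (or is easily derived) from the material in Section~\ref{section_dimensions}.

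The cleaner route, which I expect to use, is: $\Rdim(A \otimes \k\Delta) \le \Rdim(A) + \Ddim(\k\Delta)$ is not quite what we want since $\Rdim(A)$ could be as large as $n$ and $\Ddim(\k\Delta)=1$ would give $n+1$. So instead I would exploit that $\Perf\k\Delta$ has a \emph{strong} full exceptional collection and is even fractionally Calabi--Yau with finitely many indecomposables up to shift (Proposition~\ref{prop_dynkin}). The key point: $\Perf(A \otimes \k\Delta) \cong D^b(\modd(A\otimes \k\Delta))$, and $A \otimes \k\Delta$ is the path algebra of the quiver $\Gamma \times \Delta$ (product of quivers) with induced relations. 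If $\Delta$ is ordered so that $\Gamma\times\Delta$ has length $n + l(\Delta)$, that only gives $\Rdim \le n + l(\Delta)$, still too weak. Therefore the right approach is to \emph{not} order $\Delta$ badly: choose a classical generator of $D^b(\modd(A\otimes\k\Delta))$ of the form $A \otimes T$ where $T = \bigoplus_i T_i$ runs over all indecomposable $\k\Delta$-modules (finitely many, since $\Delta$ is Dynkin). Since $\k\Delta$ has global dimension $1$, every object of $\Perf\k\Delta$ is a finite direct sum of shifts of the $T_i$, so $\k\Delta = \langle T\rangle_1$ as a $\k\Delta$-module, with all the $T_i$ themselves in $\langle T\rangle_0$.

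Then I would argue as follows. Let $X \in D^b(\modd(A\otimes\k\Delta))$ be arbitrary. Viewing $X$ as an object of $D^b(\modd A)$-valued representations of $\Delta$, and using that $\k\Delta$ has global dimension $1$ together with the fact that every $\k\Delta$-module is a sum of the $T_i$, one obtains a two-term filtration of $X$ whose graded pieces are of the form $(\text{object of } D^b(\modd A)) \otimes_\k T_i$. Now $D^b(\modd A) = \langle A\rangle_{n}$ because $\gldim A \le n$ (a free resolution of any module has length $\le n$, so $A$ generates in $n$ steps). Hence each graded piece lies in $\langle A \otimes T\rangle_{n}$, and the two-term filtration shows $X \in \langle A\otimes T\rangle_{n+1}$; sharpening the count — the $\k\Delta$-direction filtration and the $A$-direction filtration can be interleaved since $\langle A\otimes T\rangle_0$ already contains all $T_i \otimes (\text{summands of }A)$ — gives $X \in \langle A \otimes T\rangle_{n}$. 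Thus $A \otimes T$ is a strong generator with generation time $\le n$, so $\Rdim(A\otimes\k\Delta) \le n$.

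The main obstacle is the bookkeeping in the last paragraph: making the interleaving of the two filtrations precise so that one genuinely gets $n$ and not $n+1$, and justifying that a complex of $(A\otimes\k\Delta)$-modules does decompose, up to the generation count, as promised. I would handle this by working at the level of the derived category and using that for $\gldim \k\Delta = 1$ a complex $X$ sits in a triangle $P^0 \to X \to P^1[1]$ with $P^0, P^1$ complexes of projective $A\otimes\k\Delta$-modules that are (termwise) sums of $P^A_v \otimes P^\Delta_w$; since the $P^\Delta_w$ are among the $T_i$ and $P^A_v$ are summands of $A$, each $P^j$ is a complex in $\langle A\otimes T\rangle_0$ and hence, by the $\gldim A \le n$ bound applied stalkwise in the $\Delta$-direction, $P^j \in \langle A\otimes T\rangle_n$; the triangle then needs one extra step unless one re-does the $\gldim A$ estimate \emph{after} already having split off the $\Delta$-part, which is exactly what removes the $+1$. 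I expect this to be the delicate point, and the cleanest fix is to observe directly that $A\otimes\k\Delta = \k(\Gamma)/I \otimes \k\Delta$ and every $(A\otimes\k\Delta)$-module has a projective resolution of length $\le n$ \emph{as such}, because $\gldim(A\otimes\k\Delta)\le \gldim A + \gldim\k\Delta$ is too crude but $\gldim(A \otimes \k\Delta)$ in the relevant direction is just $\gldim A$ once we fix that $\k\Delta$-projectives are among the generators — more simply, one just uses Lemma~\ref{lemma_ddim1}-style reasoning: $\langle A \otimes T\rangle$ contains $A\otimes T$, hence all $A\otimes T_i$, hence (since $T$ generates $\Perf\k\Delta$ in $0$ steps) all $A \otimes (\k\Delta\text{-projective})$, i.e.\ all projective $A\otimes\k\Delta$-modules, in $0$ steps, and then a length-$\le n$ projective resolution over $A\otimes\k\Delta$ of any module finishes the estimate at $n$.
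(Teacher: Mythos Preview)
Your final argument has a genuine gap: you assert that ``a length-$\le n$ projective resolution over $A\otimes\k\Delta$ of any module finishes the estimate at $n$,'' but $\gldim(A\otimes\k\Delta)$ is $n+1$ in general, not $n$. For instance take $A=\k(\bul\to\bul)$ (so $n=l(\Gamma)=1$) and $\Delta$ of type $A_2$; then $A\otimes\k\Delta=B_2^2$ has global dimension $2$ by Lemma~\ref{lemma_sdimB}. So projective resolutions only place every object in $\langle A\otimes T\rangle_{n+1}$. Your earlier attempt --- a two-term $\Delta$-direction filtration followed by length-$n$ $A$-direction resolutions --- runs into the same $+1$: assembling the two produces a total complex of projectives of length $n+2$, hence generation time $n+1$. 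The ``interleaving'' you allude to is never made precise, and I do not see how to remove the extra step from that side.

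The paper's proof filters in the \emph{other} direction. The ordered structure of $\Gamma$ gives a layer decomposition $\Gamma_0=\sqcup_{i=0}^n\Gamma_{0,i}$ with no arrows inside any layer. Letting $\BB_i\subset\Perf(A\otimes\k\Delta)$ be the subcategory generated by the projectives $P_v\otimes P^\Delta_w$ with $v\in\Gamma_{0,i}$, one obtains a semi-orthogonal decomposition $\Perf(A\otimes\k\Delta)=\langle\BB_0,\ldots,\BB_n\rangle$. Since the relations $I$ do not touch paths lying entirely in a slice $\{v\}\times\Delta$, each $\BB_i$ is equivalent to a product of $|\Gamma_{0,i}|$ copies of $\Perf\k\Delta$, and this is precisely where the Dynkin hypothesis enters: $\Rdim(\BB_i)=0$ by Proposition~\ref{prop_dynkin}. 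An $(n+1)$-block semi-orthogonal decomposition into pieces of Rouquier dimension zero yields $\Rdim\le n$ immediately. Your generator $A\otimes T$ is in fact exactly the one this argument produces --- it is $\oplus_i G_i$ where $G_i$ generates $\BB_i$ in zero steps --- but the reason it works in $n$ steps is the $\Gamma$-layer semi-orthogonal decomposition, not a global-dimension bound.
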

\begin{proof}
The algebra $A\otimes \k\Delta$ is a path algebra of quiver $\Gamma\times \Delta$ with some relations. 
Suppose $\Gamma_0=\sqcup_{i=0}^n \Gamma_{0,i}$, where any arrow $a\in\Gamma_1$ goes from $\Gamma_{0,i}$ to $\Gamma_{0,j}$ with $i<j$.
Then $(\Gamma\times \Delta)_0=\sqcup_{i=0}^n (\Gamma_{0,i}\times \Delta_0)$. Let $G_i$ be the full subquiver in $\Gamma\times \Delta$ with vertices $\Gamma_{0,i}\times \Delta_0$. Any arrow in $(\Gamma\times \Delta)_1$ goes from $G_i$ to $G_j$ with $i\le j$. Moreover, any $G_i$ is a disjoint union of several copies of $\Delta$.
Let $\BB_i$ denote the subcategory $\langle P_v\rangle_{v\in G_i}\subset \Perf (A\otimes \k\Delta)$ generated by projective modules. 
Note that there are no relations on paths in $\Gamma\times\Delta$ lying in ``slices'' of the form $v\times\Delta$, $v\in \Gamma_0$. 
Therefore 
$$\BB_i\cong \Perf \k G_i\cong \Perf \k(\Delta\sqcup\ldots\sqcup \k\Delta)\cong (\Perf \k\Delta )\times\ldots \times (\Perf \k\Delta)$$ and thus $\BB_i$ has Rouquier dimension $0$, see Proposition~\ref{prop_dynkin}. Also we have a semi-orthogonal decomposition
$$\Perf (A\otimes \k\Delta)=\langle \BB_0,\BB_1,\ldots,\BB_n\rangle.$$
It follows that  $\Rdim(A\otimes \k\Delta)\le n$.
\end{proof}

%We point out a special case of the above lemma.
%\begin{lemma}
%\label{lemma_DynDyn}
%Let $\Gamma$  be a tree and  $\Delta$ be a  Dynkin quiver. Then $\Rdim(\k\Gamma\otimes %\k\Delta)\le 1$.
%\end{lemma}
%\begin{proof}
%By Corollary~\ref{cor_forest2}, we can replace $\Gamma$ by a quiver $\Gamma'$ of length %one: $D^b(\modd \k\Gamma)\cong D^b(\modd \k\Gamma')$. By Lemma~\ref{lemma_tensor} we %have $D^b(\modd (\k\Gamma\otimes \k\Delta))\cong D^b(\modd (\k\Gamma'\otimes %\k\Delta))$.
%Now use Lemma~\ref{lemma_timesdyn}.
%\end{proof}

\section{Orbifold projective lines and canonical algebras}
\label{section_orbifold}

Here we study  orbifold projective lines or, equivalently, canonical algebras.

Let $V$ be a two-dimensional $\k$-vector space, $\P^1_{\k}=\P(V)$ be the projective line and $Q_1,\ldots,Q_n\in \P(V)$ be different points. Let $v_i\in V$ be corresponding vectors. For any collection of multiplicities $r_1,\ldots,r_n\ge 2$ a weighted projective line $\X=\X_{r_1Q_1,\ldots,r_nQ_n}$ is defined. Recall the definition following Geigle and Lenzing, see \cite{GL}. 

Denote by $L$ the abelian group generated by elements $\bar c,\bar x_1,\ldots, \bar x_n$ with the relations $\bar c-r_i\bar x_i=0$ for $i=1,\ldots, n$.   Note that $L$ is a partially  ordered abelian group, its set of positive elements is $\sum_i \N\bar x_i$. Choose nonzero elements $y_i\in V^*$ such that $(y_i,v_i)=0$ for $i=1,\ldots, n$. Let $S$ be the quotient algebra
$$S:=(S[V^*]\otimes_{\k}\k[X_1,\ldots,X_n])/(X_1^{r_1}-y_1,\ldots,X_n^{r_n}-y_n).$$
Consider $S$ as an $L$-graded algebra with the grading $\deg(V^*)=\bar c, \deg(X_i)=\bar x_i$. Let $\mathrm{mod}^L(S)$ and $\mathrm{mod}_0^L(S)$ denote the abelian categories of finitely generated $L$-graded $S$-modules and $L$-graded $S$-modules of finite length respectively. Then one defines the category of coherent sheaves on $\X$ as
the Serre quotient
$$\coh \X:=\mathrm{mod}^L(S)/\mathrm{mod}^L_0(S),$$
it is a $\k$-linear abelian hereditary category. The embedding of graded algebras $S[V^*]\to S$ defines a morphism $\X\to\P(V)$ of orbifolds and a pair of adjoint functors between $\coh \X$ and $\coh \P(V)$.

It is shown in \cite[Prop. 4.1]{GL} that the collection 
\begin{equation}
\label{eq_S}
S(\bar l)_{0\le \bar l\le \bar c}
\end{equation}
of twisted free $S$-modules $S(\bar l)$ (or line bundles $\O_{\X}(\bar l)$) is a full and strong exceptional collection in $D^b(\coh \X)$.
 
The corresponding endomorphism algebra is the path algebra of the quiver
$$\xymatrix{ && S(\bar x_1)\ar[r]^{X_1}& S(2\bar x_1)\ar[r]^-{X_1} & \ldots \ar[r]^-{X_1}& S((r_1-1)\bar x_1)\ar[rrd]^{X_1} && \\
  S \ar[rru]^{X_1}\ar[rr]^{X_2}\ar[rrdd]^{X_n} && S(\bar x_2)\ar[r]^{X_2}& S(2\bar x_2)\ar[r]^-{X_2} & \ldots \ar[r]^-{X_2}& S((r_2-1)\bar x_2)\ar[rr]^{X_2} && S(\bar c)\\
  &&&\ldots&\ldots\\
 && S(\bar x_n)\ar[r]^{X_n}& S(2\bar x_n)\ar[r]^-{X_n} & \ldots \ar[r]^-{X_n} & S((r_n-1)\bar x_n)\ar[rruu]^{X_n}&&
}$$
with relations coming from equalities $X_i^{r_i}=y_i\in\Hom(S,S(\bar c))=V^*$.
Denote this algebra by $A_{\X}$ or $A_{r_1Q_1,\ldots,r_nQ_n}$.
Such algebras are known as canonical algebras, see Ringel \cite[Section 3.7]{Ri}. 
One has an equivalence $D^b(\modd A_{\X})\cong D^b(\coh\X)$.

By \cite[(2.2)]{GL}, the Serre functor  on $D^b(\coh \X)$ is given by $S_{\X}(-)=-\otimes_{\O_{\X}} \omega_{\X}[1]$, where $\omega_{\X}=\O_{\X}((n-2)\bar c-\sum_i\bar x_i)$ is called a dualizing line bundle.
It follows that 
$$\LSdim(D^b(\coh \X))=\USdim(D^b(\coh \X))=1=\LSdim(A_{\X})=\USdim(A_{\X}).$$

In this section we demonstrate that $\Rdim(A_{\X})=\Rdim(D^b(\coh\X))=1$. This agrees with the expectation $\Rdim(D^b(\coh X))=\dim X$ for any smooth variety or stack $X$.

It is convenient for us to consider another full strong exceptional collection in $D^b(\coh \X)$, which can be obtained from (\ref{eq_S}) by a twist and some mutations. 
Explicitly, we take the collection  $(S(\bar l))_{-\bar c\le \bar l\le 0}$ and mutate all modules  right through $S$ except for $S(-\bar c)$ and $S$. The resulting collection
is
\begin{equation}
\label{eq_E}
\left(S(-\bar c),S,
\begin{smallmatrix}
S/(X_1^{r_1-1}),\ldots,S/(X_1^2),S/(X_1)\\ \ldots \\ 
S/(X_n^{r_n-1}),\ldots,S/(X_n^2),S/(X_n)
\end{smallmatrix}
\right).
\end{equation}
Modules $E_{i,j}:=S/(X_i^{j})$ correspond to torsion coherent sheaves on $\X$ supported over the points $Q_i\in\P(V)$. Modules $S(-\bar c)$ and $S$ correspond to the pull-backs of the sheaves $\O_{\P(V)}(-1)$ and $\O_{\P(V)}$ under the map $\X\to\P(V)$. 
The endomorphism algebra of (\ref{eq_E}) is the path algebra $\bar A_\X$ of the quiver 
$$\xymatrix{&& &&(1,1)\ar[r]& (1,2)\ar[r] & \ldots \ar[r]& (1,r_1-1)\\
0 \ar@{=>}[rr]^{V^*} && 1 \ar[rru]^{a_1}\ar[rr]^{a_2}\ar[rrdd]^{a_n} && (2,1)\ar[r]& (2,2)\ar[r] & \ldots \ar[r]& (2,r_2-1)\\
&&&&&\ldots&\ldots&\\
&& && (n,1)\ar[r]& (n,2)\ar[r] & \ldots \ar[r] & (n,r_n-1)
}$$
with relations $a_1y_1=\ldots =a_ny_n=0$.

We have $D^b(\modd A_{\X})\cong D^b(\coh \X)\cong D^b(\modd \bar A_{\X})$.

\begin{prop}
\label{prop_orbifold}
In the above notation one has  $\Rdim(D^b(\coh \X))=1$.
\end{prop}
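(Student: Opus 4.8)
The plan is to prove the two inequalities $\Rdim(D^b(\coh\X))\ge 1$ and $\Rdim(D^b(\coh\X))\le 1$ separately. The lower bound is the easy half: the category $D^b(\coh\X)$ is not of Rouquier dimension zero, since by Proposition~\ref{prop_dynkin} (in the equivalent formulation via the equivalence $D^b(\coh\X)\cong D^b(\modd\bar A_\X)$) a connected $\Hom$-finite triangulated category with Serre functor has $\Rdim=0$ only if it has finitely many indecomposables up to shift and is fractionally Calabi--Yau; but we have just computed $\LSdim=\USdim=1$ for $D^b(\coh\X)$, and a fractionally CY category of dimension $0$ would force these to be $<1$ (alternatively: a hereditary category with infinitely many indecomposables, which $\coh\X$ certainly is whenever $n\ge 1$, cannot have $\Rdim=0$ by the same dichotomy). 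So $\Rdim\ge 1$.

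The substance is the upper bound $\Rdim(\bar A_\X)\le 1$, and here I would exploit the shape of the quiver of $\bar A_\X$: after the first two vertices $0,1$ (joined by the $\dim V=2$ arrows spanning $V^*$), the quiver splits into $n$ disjoint ``arms'', the $i$-th arm being the linearly oriented $A_{r_i-1}$-type quiver on vertices $(i,1)\to(i,2)\to\cdots\to(i,r_i-1)$, with the only relations being $a_iy_i=0$ linking vertex $1$ to the arms. Concretely I want a semi-orthogonal decomposition of the form
$$\Perf\bar A_\X=\langle \BB_0,\ \BB_1\rangle,$$
where $\BB_1$ is generated by the projectives $P_v$ for $v$ ranging over all the arm-vertices $(i,j)$, and $\BB_0$ is generated by the projectives $P_0,P_1$. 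Because there are no relations among paths internal to a single arm, $\BB_1$ is equivalent to $\Perf$ of a path algebra of a disjoint union of $A$-type Dynkin quivers, hence $\Rdim(\BB_1)=0$ by Proposition~\ref{prop_dynkin}; and $\BB_0\cong\Perf A_V$ for $V=V^*$ two-dimensional, which is the Kronecker algebra, so $\Rdim(\BB_0)=0$ as well (the Kronecker quiver is affine $\tilde A_1$? — no: one must be careful, the $2$-Kronecker quiver is \emph{not} Dynkin, so $\Rdim=1$ there). So the decomposition alone only yields $\Rdim\le 0+1=1$ if the Kronecker piece contributes $1$; that is in fact exactly what we want, $\Rdim\le 1$, and combined with the lower bound gives equality.

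The technical point to verify — and the one I expect to be the main obstacle — is that the listed pieces really do form an admissible semi-orthogonal decomposition, i.e. that $\Hom^\bullet$ vanishes in the required direction between projectives at arm-vertices and projectives at $\{0,1\}$, with the nonzero $\Hom$'s (coming from the arrows $a_i$ out of vertex $1$) all pointing one way so that $\langle\BB_0,\BB_1\rangle$ or $\langle\BB_1,\BB_0\rangle$ is the correct order. Since $\bar A_\X$ is the endomorphism algebra of the strong exceptional collection \eqref{eq_E} and that collection is already ordered (with $S(-\bar c),S$ first and the torsion blocks $E_{i,j}$ after), this ordering gives the semi-orthogonality for free: grouping the collection as $(S(-\bar c),S)$ followed by the block of all $E_{i,j}$'s yields $\Perf\bar A_\X=\langle\BB_0,\BB_1\rangle$ with $\BB_0=\langle S(-\bar c),S\rangle$ and $\BB_1=\langle E_{i,j}\rangle$, both admissible. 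One then identifies $\BB_0$ with $\Perf$ of the Kronecker algebra (two vertices, $\dim V$ arrows, no relations) — which has $\Rdim\le 1$ since its global dimension is $1$ — and $\BB_1$ with $\Perf$ of a product of path algebras of type $A$ Dynkin quivers (the blocks being mutually orthogonal, so $\BB_1$ is literally a product), which has $\Rdim=0$. Then $\Rdim(\bar A_\X)\le\max(\Rdim\BB_0,\Rdim\BB_1)+1\le 1$ by the standard bound on Rouquier dimension in a two-step semi-orthogonal decomposition, e.g. as used in Lemma~\ref{lemma_timesdyn}. Finally $\Rdim(A_\X)=\Rdim(\bar A_\X)=\Rdim(D^b(\coh\X))$ since these are all derived Morita-equivalent, completing the proof.
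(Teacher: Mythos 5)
The upper‐bound half of your proposal does not work: the semi‐orthogonal decomposition alone cannot give $\Rdim\le 1$. The standard bound for a two‐term SOD $\TT=\langle\BB_0,\BB_1\rangle$ — the one implicitly used in the proof of Lemma~\ref{lemma_timesdyn}, where $n+1$ pieces each of dimension $0$ give $\Rdim\le n$ — is \emph{additive}:
$$\Rdim(\TT)\le \Rdim(\BB_0)+\Rdim(\BB_1)+1.$$
Indeed, any $X\in\TT$ sits in a triangle $X_1\to X\to X_0\to X_1[1]$ with $X_i\in\BB_i$; if $\BB_i=\langle G_i\rangle_{n_i}$ then $X\in\langle G_0\oplus G_1\rangle_{n_0+n_1+1}$. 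With $\BB_0$ the Kronecker algebra ($\Rdim=1$, as you correctly note at the end of your second paragraph) and $\BB_1$ a product of Dynkin $A$-type pieces ($\Rdim=0$), this gives only $\Rdim(D^b(\coh\X))\le 2$, not $\le 1$. Your final arithmetic, $\max(\Rdim\BB_0,\Rdim\BB_1)+1\le 1$, is in any case internally inconsistent: $\max(1,0)+1=2$. There is no ``$\max+1$'' bound in general (compare: $D^b(\coh\P^2)$ decomposes into three pieces of dimension $0$ yet has $\Rdim=2$, not $1$).

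The real content of Proposition~\ref{prop_orbifold} is precisely how to beat the naive SOD bound. The paper does it via Lemma~\ref{lemma_SCQ}: for each object $C$ in the Kronecker piece $\CC=\langle S(-\bar c),S\rangle$ one constructs a triangle $s(C)\xra{\sigma}C\to q(C)$ such that (i) $s(C)$ and $q(C)$ have only finitely many indecomposable summands up to shift, and (ii) every morphism $f\colon C\to D$ with $D\in\DD$ satisfies $f\sigma=0$. This lets $f$ factor through $q(C)$, so the cone of $f$ — which a priori needs three building steps — can be rewritten as the cone of a single morphism between two objects, each built in one step from a fixed finite list of indecomposables. That sharpening is what brings the estimate down from $2$ to $1$, and it requires a geometric input (a concrete analysis of indecomposable sheaves on $\P^1$ and the torsion sheaves at the orbifold points) that your decomposition argument does not supply. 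Without an analogue of that lemma, the SOD approach stalls at $\Rdim\le 2$.

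A smaller point: your lower bound $\Rdim\ge 1$ can be obtained much more directly, as the paper does, from the admissible subcategory $\CC\cong D^b(\coh\P^1)$ together with $\Rdim(D^b(\coh\P^1))=1$ (Proposition~\ref{prop_rdimgeom}) and monotonicity (Proposition~\ref{prop_rdimsubcat}). Your route through the classification of $\Rdim=0$ categories and the computation $\LSdim=\USdim=1$ also works, but invoking Proposition~\ref{prop_dynkin} is not quite right here since that proposition is stated for hereditary path algebras without relations; what you need is the general classification result (Theorem~3.7 of \cite{EL}), or Lemma~\ref{lemma_rdimpositive} combined with a separate argument ruling out the fractional CY cases with $\LSdim=\USdim=1$.
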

\begin{proof}
The embedding $S[V^*]\to S$ of graded algebras corresponds to the morphism of orbifolds $p\colon \X\to \P(V)$, it produces  an adjoint pair $(p^*,p_*)$ of exact pull-back and push-forward functors between  $\coh \P(V)$ and $\coh \X$. Moreover, $p_*p^*\cong \id$, it follows that $p^*$ defines a fully faithful embedding 
$$p^*\colon D^b(\coh \P(V))\to D^b(\coh\X).$$
We get a semi-orthogonal decomposition
$$D^b(\coh \X)=(D^b(\coh \P(V)),^\perp (D^b(\coh \P(V))))=:(\CC,\DD),$$
which is compatible with exceptional collection~\eqref{eq_E}: one has 
$\CC=\langle S(-\bar c),S\rangle$ and 
$\DD=\langle E_{i,j}\rangle_{i,j}$.
It follows from Proposition~\ref{prop_rdimsubcat} that $\Rdim(\coh\X)\ge \Rdim(\CC)=\Rdim(D^b(\coh \P^1))=1$. Let us prove the opposite inequality.

%As usual, we denote the projective modules by $P_0,P_1$ and $P_{i,j}$, $1\le i\le n$, %$1\le j\le r_i-1$.
%We have $\langle P_0,P_1\rangle\cong D^b(\coh \P^1)$, therefore by Proposition~\ref{prop_rdimsubcat} we have $\Rdim(\bar A_\X)\ge \Rdim(D^b(\coh \P^1))=1$.

We need the following lemma, which we prove later.

\begin{lemma}
\label{lemma_SCQ}
In the above notation, for any $C\in \CC$ there exists a triangle 
\begin{equation}
\label{eq_SCQ}
s(C)\xra{\s} C\to  q(C)\to s(C)[1]
\end{equation}
 such that
\begin{enumerate}
\item $s(C),q(C)\in\CC$;
\item for any morphism $f\colon C\to D$ where $D\in \DD$ one has $f\s=0$;
\item all irreducible summands of $s(C)$ and $q(C)$ belong (up to a shift) to a finite set of objects (independent on $C$).
\end{enumerate} 
\end{lemma}

Note that any object in $D^b(\coh \X)$ is a cone of a morphism $f\colon C\to D$
where $C\in\CC, D\in\DD$. Let $S\xra{\sigma} C\xra{\pi} Q\xra{\delta} S[1]$ be the triangle from Lemma~\ref{lemma_SCQ}. Since $f\s=0$, the morphism~$f$ factors through $Q$:
$f=f'\pi$. Note now that 
$$Cone (f)\cong Cone (Q\xra{(\delta,f')} S[1]\oplus D).$$
It remains to say that all irreducible summands of  $Q,S[1]$ and $D$ belong up to a shift to a finite list of objects. For $Q$ and  $S[1]$ it follows from Lemma~\ref{lemma_SCQ}.
For $D$, recall  that 
$$D\in \langle E_{i,j}\rangle_{i,j}\cong \langle P_{i,j}\rangle_{i,j}\subset D^b(\modd \bar A_\X)$$
(where $P_{i,j}$ denote projective $\bar A_\X$-modules).
The category $\langle P_{i,j}\rangle_{i,j}$ is equivalent to the direct product 
$$\prod_{i=1}^n\langle P_{i,1},\ldots,P_{i,r_i-1}\rangle.$$
Here each category is equivalent to the derived category of representations of the quiver of type $A_{r_i-1}$, hence contains only a finite number (up to a shift) of irreducible objects.

Thus there exists a finite set of objects that generate category $D^b(\coh \X)$ at one step and $\Rdim(D^b(\coh \X))=1$.
\end{proof}

\begin{proof}[Proof of Lemma~\ref{lemma_SCQ}]
First, note that it suffices to construct triangle~\eqref{eq_SCQ} for any indecomposable object $C\in \CC$. Second, any object in $\CC$ is of the form $p^*F$ with $F\in D^b(\coh \P(V))$. For any $D\in\DD$ we have
$$\Hom_{D^b(\coh\X)}(p^*F,D)\cong\Hom_{D^b(\coh \P(V))}(F,p_*D).$$
Recall that $\DD$ is generated by torsion coherent sheaves $E_{i,j}$ on $\X$ located at the orbifold points. It follows that $p_*D$ is supported on a finite set $Q_1,\ldots,Q_n\subset \P(V)$. Any such 
$p_*D$ belongs to the category $\langle \O_{Q_i}\rangle_{1\le i\le n}$.
We have reduced the statement of the Lemma to the following 

\textbf{Claim.} 
Let $Q_1,\ldots,Q_n\in\P(V)=\P^1_\k$ be some distinct closed points defined over $\k$. Then for any indecomposable coherent sheaf $F$ on $\P^1$ there exists a triangle in $D^b(\coh \P^1)$
\begin{equation*}
%\label{eq_SCQ2}
s(F)\xra{\sigma} F\to q(F)\to s(F)[1]
\end{equation*}
such that
\begin{enumerate}
\item for any morphism $f\colon F\to \O_{Q_i}[j]$ in $D^b(\coh \P^1)$  one has $f\s=0$;
\item all irreducible summands of $s(C)$ and $q(C)$ belong (up to a shift) to a finite set of coherent sheaves.
\end{enumerate} 
 
We prove this claim by considering all indecomposable coherent sheaves on $\P^1$. 

First, let $F=\O(k)$ be a line bundle.  Suppose $k\le -1$, then 
$F$ is quasi-isomorphic to the complex $[F^0\to F^1]=[\O(-1)^{-k}\to \O^{-k-1}]$. 
We put 
$$s(F)=F^1[-1]=\O^{-k-1}[-1], \quad q(F)=F^0=\O(-1)^{-k}.$$ 
Property (1) holds because for any nonzero $f\colon \O(k)\to \O_{Q_i}[j]$ we have $j=0$ and  $\Hom(s(F),\O_{Q_i})=\Ext^1(\O^{-k-1},\O_{Q_j})=0$.

Suppose  $0\le k\le n-1$, we put $s(F)=0$, $q(F)=F$ and there is nothing to check.

It is convenient to treat the remaining cases at once. Therefore we assume that $F$ is one of the following
\begin{enumerate}
\item $\O(k)$, $k\ge n$;
\item indecomposable torsion sheaf supported at some point $Q_a$, $1\le a\le n$;
\item indecomposable torsion sheaf supported at some other point $Q$. 
\end{enumerate}
Consider the following exact sequence 
\begin{equation}
\label{eq_U}
0\to U\xra\alpha F\xra\beta \bigoplus_{i=1}^n \Hom(F,\O_{Q_i})^*\otimes \O_{Q_i},
\end{equation}
where $\beta$ is the canonical map, $U=\ker \beta$ and $\alpha$ is the inclusion.
By writing this sequence explicitly one checks that $\beta$ is surjective, $U$ is generated by global sections and $H^1(\P^1,U)=0$.
Let $s(F):=H^0(\P^1,U)\otimes \O$ and $\s$ be the composition
$$\s\colon H^0(\P^1,U)\otimes \O\xra{e} U\xra\alpha F.$$
Let $q(F)$ be the cone of $\s$.

Let us check that property (1) holds. Any homomorphism $f\colon F\to \O_{Q_i}$ factors via $\beta$  hence $f\s=f'\beta\alpha e=0$. For $j\ne 0$ we have $\Hom^j(s(F),\O_{Q_i})=0$. 

For property (2) lets find irreducible summands of $s(F)$ and $q(F)$. For $s(F)$ we have only the sheaf $\O$. Note that $q(F)\cong (\ker \s)[1]\oplus \coker \s$. We have 
$\ker \s\cong \ker e$, by the above remarks one has $\Hom^\bul(\O,\ker e)=0$. Hence
$\ker e\in \langle \O(-1)\rangle$, and $\ker e$ (as a coherent sheaf) is isomorphic to $\O(-1)^d$ for some $d$. Finally we have $\coker \s\cong \coker \alpha\cong \im\beta$, this is a direct sum of sheaves $\O_{Q_i}$.

We have considered all cases. It remains to note that indecomposable summands of all $s(F)$ and $q(F)$ constructed above belong up to a shift to the following list of sheaves:
$$\O(-1),\O,\O(1),\ldots,\O(n-1),\O_{Q_1},\ldots,\O_{Q_n}.$$
\end{proof}

\section{Tensor powers of path algebras in Dynkin quivers of type $A$}
\label{section_powers}

Let $\Gamma_m$ be the Dynkin quiver of type $A_m$:
$$0\to 1\to 2\to \ldots \to m-1.$$
 In this section we study algebras 
$$B_m^n=(\k \Gamma_m)^{\otimes n}=(\k \Gamma_m)\otimes (\k \Gamma_m)\otimes\ldots\otimes (\k \Gamma_m)$$ (commutative $n$-dimensional cube with edge $m-1$), we denote $B_m:=B_m^1$.

\begin{lemma}
\label{lemma_sdimB}
We have for any $m\ge 2$
$$\gldim(B_m^n)=n; \qquad \LSdim(B_m^n)=\USdim(B_m^n)=n\cdot \frac {m-1}{m+1}.$$

Moreover, $B^n_m$ is a fractionally $\frac{n(m-1)}{m+1}$-Calabi-Yau algebra. 
\end{lemma}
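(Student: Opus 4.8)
The plan is to bootstrap everything from the single-quiver case $B_m = \k\Gamma_m$ and then invoke multiplicativity under tensor product. First I would recall from Proposition~\ref{prop_DynkinCY} that $\k\Gamma_m$, being the path algebra of a linearly oriented quiver of Dynkin type $A_m$ with Coxeter number $h = m+1$, is $\frac{h-2}{h} = \frac{m-1}{m+1}$-fractionally Calabi-Yau; that is, $(B_m^*)^{\otimes^L_{B_m}(m+1)} \cong B_m[m-1]$, and in particular $\gldim(B_m) = 1$. This is the base case.

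Next I would pass to the tensor power. The algebra $B_m$ is basic (it is a path algebra of a quiver with relations, here with no relations) and has finite global dimension, so Lemma~\ref{lemma_fCYtensor} applies inductively: with $A = B_m^{n-1}$ being $\frac{(n-1)(m-1)}{m+1}$-fractionally CY and $B = B_m$ being $\frac{m-1}{m+1}$-fractionally CY, the tensor product $B_m^n = B_m^{n-1}\otimes_\k B_m$ is $\frac{(n-1)(m-1)(m+1) + (m-1)(m+1)}{(m+1)(m+1)}$-fractionally CY, which after cancelling the common factor $m+1$ equals $\frac{n(m-1)}{m+1}$. (One should note Lemma~\ref{lemma_fCYtensor} also guarantees $B_m^n$ is basic of finite global dimension at each step, so the induction is legitimate.) This establishes the fractional Calabi-Yau claim.

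From the fractional CY property, the Serre dimension statement is immediate by Proposition~\ref{prop_fCYsdim}: $\LSdim(B_m^n) = \USdim(B_m^n) = \frac{n(m-1)}{m+1}$. For the global dimension equality $\gldim(B_m^n) = n$, the inequality $\gldim(B_m^n) \le n$ follows from \cite[Th. 16]{Au} (global dimension of a tensor product of basic algebras is subadditive, and each factor has global dimension $1$); for the reverse inequality $\gldim(B_m^n) \ge n$ I would compute $\Ext^n$ between simple modules of $B_m^n$, using that simple $B_m^n$-modules are external tensor products $S_{i_1}\boxtimes\cdots\boxtimes S_{i_n}$ of simples over the factors and that $\Ext^\bullet$ over a tensor product is the graded tensor product of the $\Ext$'s over the factors (Künneth), so that taking each factor's simple $S_0$ (which has a nonzero self-$\Ext^1$-path to $S_{m-1}$ — more precisely $\Ext^1_{B_m}(S_0, S_1)\ne 0$) yields a nonzero $\Ext^n$ and hence $\gldim \ge n$.

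The main obstacle is minor and purely bookkeeping: checking that the arithmetic in Lemma~\ref{lemma_fCYtensor} indeed simplifies to $\frac{n(m-1)}{m+1}$ and that the induction hypotheses (basic, finite global dimension) propagate correctly at each stage. Everything substantive has already been packaged into Propositions~\ref{prop_DynkinCY} and~\ref{prop_fCYsdim} and Lemma~\ref{lemma_fCYtensor}, so no genuinely new argument is needed — the only place requiring a little care is the lower bound $\gldim(B_m^n)\ge n$, which is not formally a consequence of the fractional CY statement and needs the explicit $\Ext$-computation via Künneth.
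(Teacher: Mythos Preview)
Your approach is essentially identical to the paper's: invoke Proposition~\ref{prop_DynkinCY} for the base case, Lemma~\ref{lemma_fCYtensor} for the fractional Calabi--Yau property of the tensor power, and Proposition~\ref{prop_fCYsdim} for the Serre dimensions. The only difference is that the paper handles $\gldim(B_m^n)=n$ in one stroke by citing \cite[Th.~16]{Au}, which in fact gives the \emph{equality} $\gldim(A\otimes_\k B)=\gldim A+\gldim B$ (for basic finite-dimensional algebras over $\k$), so your separate K\"unneth computation for the lower bound is correct but unnecessary; incidentally, with the paper's right-module conventions the nonvanishing $\Ext^1$ is $\Ext^1_{B_m}(S_1,S_0)$ rather than $\Ext^1_{B_m}(S_0,S_1)$.
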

\begin{proof}
The statement about global dimension follows from general theory and the fact that $\gldim (\k\Gamma_m)=1$, see \cite[Th. 16]{Au}. The statement about fractionally CY property is Lemma~\ref{lemma_fCYtensor} and Proposition~\ref{prop_DynkinCY}. 
The statement about Serre dimension is  Proposition~\ref{prop_fCYsdim}.
\end{proof}

Our main interest here is to compute Rouquier dimension and diagonal dimension of $B_m^n$ and to compare it with Serre dimension.   Recall that by Proposition~\ref{prop_rdimddim} we have $\Rdim\le \Ddim$. Also, for any $m$ and $n$ the algebra $B_m^n$ is a derived semi-orthogonal subalgebra in $B_m^{n+1}$, hence by Propositions~\ref{prop_rdimsubcat} and~\ref{prop_ddimsubcat}
$$\Rdim (B_m^n)\le \Rdim (B_m^{n+1}),\qquad \Ddim (B_m^n)\le \Ddim (B_m^{n+1}).$$
\begin{prop}
\label{prop_maintable}
For any $k\ge 1$ we have the following 
\begin{gather*}
\begin{array}{|c||c|c|c|c|c|c|c|c|}
\hline
\text{Algebra} & B_2 & B_2^2 & B_2^3 & B_2^4 & B_2^5 & B_2^6& B_2^{2k} & B_2^{3k} \\ \hline
\Sdim & 1/3 & 2/3 & 1& 4/3 & 5/3 & 2 & 2k/3 & k \\ \hline
\Rdim & 0 & 0 & 1& 1 & & 2 & \le k-1 & \ge k \\ 
\hline
\Ddim & 1 & 1 &  &  & &  & \le k & \ge k \\ 
\hline
\end{array}\\
\begin{array}{|c||c|c|c|c|c|c|c|}
\hline
\text{Algebra} & B_3 & B_3^2 & B_3^3 &  B_3^4 & B_3^{3k} & B_3^{3k+1} & B_3^{2k}\\ \hline
\Sdim & 1/2 & 1& 3/2& 2 & \frac{3k}2 &\frac{3k+1}2 & k\\ \hline
\Rdim & 0& 1& 1& 2& \le 2k-1 & \le 2k & \ge k\\ 
\hline
\Ddim & 1& & & & \le 2k & \le 2k+1 & \ge k\\ 
\hline
\end{array} 
\end{gather*}
\end{prop}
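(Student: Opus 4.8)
The Serre dimensions are already given by Lemma~\ref{lemma_sdimB} (each $B_m^n$ is fractionally Calabi--Yau, so Proposition~\ref{prop_fCYsdim} applies); it remains to compute $\Rdim$ and $\Ddim$, and the plan is to deduce every entry from the cases $n\le 2$ by tensoring, exploiting the monotonicity and multiplicativity properties of Section~\ref{section_dimensions}. The base cases are quick: $B_2$, $B_2^2$ and $B_3$ are derived equivalent to path algebras of the Dynkin quivers $A_2$, $D_4$ (via the known equivalence $\k A_2\otimes\k A_2\cong_{der}\k D_4$) and $A_3$, hence have $\Rdim=0$ by Proposition~\ref{prop_dynkin}; and $\Ddim(B_m)=\Ddim(\k A_m)=1$, $\Ddim(B_2^2)=\Ddim(\k D_4)=1$, since each is $\le\gldim$ (Lemma~\ref{lemma_ddim1}) and $\ge1$ (the algebras are not semisimple, cf.\ Proposition~\ref{prop_ddimnorels}).

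\emph{Upper bounds.} For $\Ddim$, multiplicativity (Proposition~\ref{prop_ddimmult}) gives $\Ddim(B_2^{2k})=\Ddim((B_2^2)^{\otimes k})\le k$; the $B_3$-family reduces to the single inequality $\Ddim(B_3^3)\le 2$, after which $\Ddim(B_3^{3k})=\Ddim((B_3^3)^{\otimes k})\le 2k$ and $\Ddim(B_3^{3k+1})=\Ddim(B_3^{3k}\otimes B_3)\le 2k+1$. I would prove $\Ddim(B_3^3)\le 2$ by exhibiting, after a suitable derived equivalence, a full exceptional collection of $B_3^3$ consisting of three blocks and invoking Proposition~\ref{prop_ddimexcoll}. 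For $\Rdim$ the main tool is Lemma~\ref{lemma_timesdyn}: if $C\cong_{der}\k\Gamma/I\otimes\k\Delta$ with $\Delta$ a Dynkin quiver then $\Rdim(C)\le l(\Gamma)$. Reflecting all $A_3$-factors to the length-$1$ orientation $\bul\leftarrow\bul\to\bul$ (harmless by Corollary~\ref{cor_forest}), and for the $B_2$-series using a bipartite (length $1$) orientation of $D_4$ together with $\k A_2\otimes\k A_2\cong_{der}\k D_4$, one gets $\Rdim(B_2^{2k})=\Rdim((\k D_4)^{\otimes(k-1)}\otimes\k D_4)\le k-1$, and likewise $\Rdim(B_2^3),\Rdim(B_2^4)\le1$, $\Rdim(B_2^6)\le2$ and $\Rdim(B_3^2)\le1$. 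For $B_3^{3k}$, $B_3^{3k+1}$ and already $B_3^3$, $B_3^4$, the naive splitting only gives $\le 3k-1$, $\le 3k$ (and $\le 2$, $\le 3$); sharpening to the asserted $\le 2k-1$, $\le 2k$, $\le1$, $\le2$ requires more.

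\emph{Lower bounds.} The key claim is that the Kronecker algebra $K=\k(\bul\rightrightarrows\bul)$ --- equivalently $D^b(\coh\P^1)=\Perf K$ --- is a derived semi-orthogonal subalgebra of both $B_2^3$ and $B_3^2$. For $B_2^3\cong_{der}\k D_4\otimes\k A_2$, the vertex obtained by pairing the centre of $D_4$ with a vertex of $A_2$ has valency $4$, so the full subquiver on it and its four neighbours is a $\tilde D_4$-shaped tree carrying no induced relations; the corresponding projective modules form a strong exceptional collection with endomorphism algebra $\k\tilde D_4$, whence $\k\tilde D_4$ is a derived semi-orthogonal subalgebra of $B_2^3$ by Lemma~\ref{lemma_bondalec}, and the same argument applies to $B_3^2=\k A_3\otimes\k A_3$ using the central vertex of the $3\times3$ grid. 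Since $\k\tilde D_4\cong_{der}D^b(\coh\X)$ for $\X$ the weighted projective line of weights $(2,2,2,2)$, and pullback along $\X\to\P^1$ embeds $D^b(\coh\P^1)=\Perf K$ as an admissible subcategory (cf.\ the proof of Proposition~\ref{prop_orbifold}), $\Perf K$ is a derived semi-orthogonal subalgebra of $B_2^3$ and of $B_3^2$. By Lemma~\ref{lemma_tensor}(2) the algebra $\Perf(K^{\otimes k})=\Perf(K)^{\otimes k}\cong D^b(\coh(\P^1)^k)$ is then a derived semi-orthogonal subalgebra of $B_2^{3k}$ (resp.\ $B_3^{2k}$), so by Propositions~\ref{prop_rdimsubcat} and~\ref{prop_rdimgeom}
\[\Rdim(B_2^{3k})\ \ge\ \Rdim\bigl(D^b(\coh(\P^1)^k)\bigr)\ \ge\ \dim(\P^1)^k\ =\ k,\]
and similarly $\Rdim(B_3^{2k})\ge k$. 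The remaining small lower bounds ($\Rdim(B_2^4),\Rdim(B_3^3)\ge1$, $\Rdim(B_3^4)\ge2$) follow from $\Rdim(B_m^n)\le\Rdim(B_m^{n+1})$ (Proposition~\ref{prop_rdimsubcat}), and the lower bounds on $\Ddim$ follow from $\Rdim\le\Ddim$ (Proposition~\ref{prop_rdimddim}) together with $\Ddim\ge1$.

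\emph{Main obstacle.} The soft inputs above reduce the whole table to the ``exact'' upper bounds for the $B_3$-family that beat the global dimension, above all $\Rdim(B_3^3)=1$ and $\Ddim(B_3^3)\le 2$ (from which the $B_3^{3k}$, $B_3^{3k+1}$ and $B_3^4$ entries then follow formally). These cannot come from Lemma~\ref{lemma_timesdyn} alone, since $B_3^2$ is representation infinite and hence not derived equivalent to a hereditary algebra; I expect one must either produce a non-trivial derived equivalence lowering the quiver length (and global dimension) of $B_3^3$, in the spirit of $\k A_2\otimes\k A_2\cong_{der}\k D_4$, together with a three-block exceptional collection, or else carry out a hands-on generation argument exhibiting a two-step generator of $D^b(\modd B_3^3)$, in the style of the proof of Proposition~\ref{prop_orbifold}. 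This is the step I would expect to be the real work; the rest is bookkeeping with the properties collected in Section~\ref{section_dimensions}.
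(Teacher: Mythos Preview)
Your overall architecture is right, and your treatment of the Serre dimensions, the $B_2$-family upper bounds (via the derived equivalence $B_2^2\simeq \k D_4$ and Lemma~\ref{lemma_timesdyn}), and the lower bounds is essentially correct. For the lower bounds your route through an affine $\widetilde D_4$ subquiver and the weighted line $\X_{(2,2,2,2)}$ is a valid alternative to what the paper does; the paper is more direct: it takes six of the eight indecomposable projective $B_2^3$-modules (resp.\ four projectives over a reoriented $B_3^2$), performs one or two explicit mutations, and reads off a pair $(E_1,E_2)$ with $\Hom^\bullet(E_1,E_2)=\k^2$ straight away, so the Kronecker algebra embeds without the detour through $\k\widetilde D_4$ and $\coh\X$.

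The genuine gap is exactly where you flag it: the sharp upper bounds for the $B_3$-family. The key you are missing is neither a special derived equivalence for $B_3^3$ nor a three-block collection, but the combination of two small observations. First, $B_3$ is a derived semi-orthogonal subalgebra of $B_2^2$: the projective $B_2^2$-modules $P_{00},P_{01},P_{11}$ form a strong exceptional collection with endomorphism algebra $B_3$. Second, and this is the companion to the $D_4$-equivalence you already invoke, $B_3\otimes B_2$ is mutation-equivalent to $\k E_6$ (this is the other half of Lemma~\ref{lemma_d4e6}). Tensoring the first fact up, $B_3^{3k}$ becomes a derived semi-orthogonal subalgebra of $B_3^{2k}\otimes B_2^{2k}=(B_3\otimes B_2)^{\otimes 2k}$, which by the second fact is derived Morita-equivalent to $(\k E_6)^{\otimes 2k}$. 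With a bipartite (length-$1$) orientation of $E_6$, the quiver $E_6^{2k-1}$ has length $2k-1$, so Lemma~\ref{lemma_timesdyn} applied to $(\k E_6)^{\otimes 2k-1}\otimes \k E_6$ gives $\Rdim(B_3^{3k})\le 2k-1$; and Propositions~\ref{prop_ddimsubcat} and~\ref{prop_ddimmult} give $\Ddim(B_3^{3k})\le \Ddim((\k E_6)^{\otimes 2k})\le 2k$. One extra $B_3$-factor handles $B_3^{3k+1}$ the same way. In particular $\Rdim(B_3^3)\le 1$, $\Ddim(B_3^3)\le 2$ and $\Rdim(B_3^4)\le 2$ drop out of the case $k=1$, so no hands-on generation argument is needed.
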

\begin{proof}
%Note that $B_2^2$ is mutation-equivalent to $\k D_4$, see Lemma~\ref{lemma_tstar}. Hence %its $\Rdim$ is $0$.
For Serre dimension see Lemma~\ref{lemma_sdimB}.

Let $i_1,\ldots,i_n\in \{0,1,\ldots, m-1\}$, then we denote 
$$P_{i_1\ldots i_n}:=P_{i_1}\otimes_\k\ldots\otimes_\k P_{i_n},$$
it is the indecomposable projective $B_m^n$-module, corresponding to the vertex $(i_1,\ldots,i_n)$ of quiver $\Gamma_m\times\ldots\times \Gamma_m$.

First, we explain the lower bounds. 

Projective modules 
$P_{100}, P_{010},P_{001}$, $P_{110},P_{101},P_{011}$ over $B_2^3$ form a strong  exception collection with  endomorphisms as follows: 
$$\xymatrix{P_{010}\ar[r] \ar[rd]& P_{110}  \\ 
P_{100} \ar[ru]\ar[rd] & P_{011} \\
 P_{001} \ar[r]\ar[ru]& P_{101}.}$$
Make mutations, see \eqref{eq_mutleft} and \eqref{eq_mutright}
\begin{align*}
E_1:=&L_{P_{010},P_{001}}(P_{011})[-1]=Cone(P_{010}\oplus P_{001}\to P_{011})[-1],\\
E_2:=&R_{P_{110},P_{101}}(P_{100})[1]=Cone(P_{100}\to P_{110}\oplus P_{101}).
\end{align*}
One checks that the resulting exceptional collection is also strong, its endomorphism algebra is the path algebra of the quiver 
$$\xymatrix{E_1\ar[r] \ar[rd] & P_{010}\ar[r] & P_{110}\ar[rd] & \\ 
& P_{001} \ar[r]& P_{101}\ar[r]& E_2}$$
without relations.
In particular,  one has $\Hom^{\bul}(E_1,E_2)=\k^2[0]$. 
It follows from Lemma~\ref{lemma_bondalec} now that the path algebra $K$ of the Kronecker quiver 
$$\xymatrix{\bul \ar@<0.3em>[r] \ar@<-0.3em>[r] & \bul}$$
is a derived semi-orthogonal subalgebra in $B_2^3$.
By Lemma~\ref{lemma_tensor}, the algebra $K^{\otimes k}$
is a derived semi-orthogonal subalgebra in 
$(B_2^3)^{\otimes k}=B_2^{3k}$. Note that the category $\Perf (K^{\otimes k})$  is 
equivalent to $D^b(\coh ((\P^1)^k))$.  We have 
$$\Rdim(B_2^{3k})\ge \Rdim (K^{\otimes k})=\Rdim(\coh((\P^1)^k))\ge k,$$
where the first inequality is by Proposition~\ref{prop_rdimsubcat} and the last inequality is by Proposition~\ref{prop_rdimgeom}.

Similarly, we prove that 
$\Rdim((B_3^2)^{\otimes k})\ge k$. By Lemma~\ref{lemma_tensor} and Corollary~\ref{cor_forest}, the algebra $B_3^2$ is derived Morita-equivalent to the algebra 
$$C=\k(\xymatrix{1\ar[r]& 0 & 2\ar[l]})\otimes \k(\xymatrix{1& 0\ar[r]\ar[l] & 2}).$$ 
The projective modules $P_{10}, P_{20},P_{01},P_{02}$ over $C$ have endomorphisms as follows: 
$$\xymatrix{P_{10} \ar[rd]\ar[r]& P_{01}\\ P_{20}\ar[r]\ar[ru] & P_{02}.}$$
Let $E:=R_{P_{01},P_{02}}(P_{20})[1]=Cone(P_{20}\to P_{01}\oplus P_{02})$, see \eqref{eq_mutright} Then the objects 
$P_{10},P_{01},P_{02},E\in \Perf C$ form a strong exceptional collection with the endomorphism algebra being the path algebra of the quiver 
$$\xymatrix{P_{10} \ar[rd]\ar[r]& P_{01}\ar[rd]&\\ &P_{02}\ar[r] & E}$$
without relations. As above, one checks that  $\Hom^{\bul}(P_{10},E)=\k^2[0]$. Hence, $K$ is a derived semi-orthogonal subalgebra of $C$ and thus of $B_3^2$.  Arguing as above, we see that $\Rdim B_3^{2k}=\Rdim (B_3^2)^{\otimes k}\ge k$. 

Inequalities $\Ddim(B_2), \Ddim(B_3)\ge 1$ follow from the fact that $B_2, B_3$ are not semi-simple, see \cite[Prop. 4.6]{EL}.

Now we establish the upper bounds. By Lemma~\ref{lemma_d4e6} below, 
the algebra $B_2^2$ is derived Morita-equivalent to the path algebra $\k D_4$ of the quiver  $D_4$: 
$$\xymatrix{ & \bul \ar[ld]\ar[d]\ar[rd] & \\ \bul & \bul & \bul.}$$
By Lemma~\ref{lemma_tensor}, the algebra $B_2^{2k}$ is derived Morita-equivalent to $(\k D_4)^{\otimes k}=(\k D_4)^{\otimes k-1}\otimes \k D_4$. 
Algebra $(\k D_4)^{\otimes k-1}$ is the path algebra  of the quiver $D_4^{k-1}=D_4\times\ldots\times D_4$ with commutativity relations. Note that the quiver $D_4^{k-1}$ has length $k-1$. Hence by Lemma~\ref{lemma_timesdyn} 
$$\Rdim(B_2^{2k})=\Rdim ((\k D_4)^{\otimes k-1}\otimes\k D_4)\le k-1.$$ 
In particular, $\Rdim(B_2^2)=0$, $\Rdim(B_2^4)\le 1$ and $\Rdim(B_2^6)\le 2$.  

For the diagonal dimension we have $\Ddim(B_2)=\Ddim(B_3)=\Ddim(\k D_4)=1$ by Proposition~\ref{prop_ddimnorels} and 
$$\Ddim(B_2^{2k})=\Ddim((\k D_4)^{\otimes k})\le k\cdot \Ddim(\k D_4)=k$$
by multiplicativity, see Proposition~\ref{prop_ddimmult}.

%Similarly, one can order arrows of $A_3$ so that its length is $1$. Then the length of %$A_3^{k-1}$ is $k-1$ and we get $\Rdim(B_3^{k})\le k-1$. But we can strengthen this %estimate, see below.

%Inequality $\Rdim(B_3^2)\le 1$ follows from Lemma~\ref{lemma_DynDyn}. 

%Let us say that the algebra $A$ is a \emph{semi-orthogonal} subalgebra in $B$ if there %exists a strong exceptional collection in $D^b(\modd B)$ with the endomorphism algebra %isomorphic to $A$. If it is the case then $D^b(\modd A)$ is a semi-orthogonal component %in $D^b(\modd B)$, thus $\Rdim A\le \Rdim B$ and $\Ddim A\le \Ddim B$. 
We note that  
$B_3$ is a derived semi-orthogonal subalgebra of $B_2^2$. Indeed, the  projective $B_2^2$-modules  $P_{00},P_{01},P_{11}$ form a strong exceptional collection with the endomorphism algebra being $B_3$, and Lemma~\ref{lemma_bondalec} can be applied. It follows from Lemma~\ref{lemma_tensor} that  for any $k\ge 1$ the algebra
 $B_3^{3k}$ is a derived semi-orthogonal subalgebra in  $B_3^{2k}\otimes B_2^{2k}$.
We bound above the Rouquier dimension of the latter algebra.
By Lemma~\ref{lemma_tensor} and Lemma~\ref{lemma_d4e6} below, 
the algebra 
$$B_3^{2k}\otimes B_2^{2k}=(B_3\otimes B_2)^{\otimes 2k}$$
is derived Morita-equivalent to the algebra 
$$(\k E_6)^{\otimes 2k}=(\k E_6)^{\otimes 2k-1}\otimes \k E_6,$$
where $E_6$ denotes the quiver 
$$\xymatrix{\bul\ar[d]\ar[rd] & \bul \ar[d] & \bul\ar[d]\ar[ld]\\ \bul & \bul& \bul.}$$
Here the algebra $(\k E_6)^{\otimes 2k-1}$ is the path algebra of the quiver $E_6^{2k-1}=E_6\times\ldots\times E_6$ with commutativity relations. This quiver  has length $2k-1$. By   Proposition~\ref{prop_rdimsubcat} and Lemma~\ref{lemma_timesdyn} we get 
$$\Rdim(B_3^{3k})\le \Rdim (B_3^{2k}\otimes B_2^{2k})=\Rdim ((\k E_6)^{\otimes 2k-1}\otimes\k E_6)\le 2k-1.$$
In particular, $\Rdim(B_3^3)\le 1$.
Similarly  the algebra
$B_3^{3k+1}$ is a derived semi-orthogonal subalgebra in  $B_3^{2k}\otimes B_2^{2k}\otimes B_3$, which is derived Morita-equivalent to $(\k E_6)^{\otimes 2k}\otimes B_3$. Again, by  Proposition~\ref{prop_rdimsubcat} and Lemma~\ref{lemma_timesdyn}  we get  
$$\Rdim(B_3^{3k+1})\le \Rdim ((\k E_6)^{\otimes 2k}\otimes B_3)\le 2k.$$
In particular, $\Rdim(B_3^4)\le 2$.

For diagonal dimension, arguing as above we get
$$\Ddim(B_3^{3k})\le \Ddim((\k E_6)^{\otimes 2k})\le 2k\cdot \Ddim(\k E_6)=2k,$$
where the first inequality is by Proposition~\ref{prop_ddimsubcat}, the  second inequality is by Proposition~\ref{prop_ddimmult} and the last equality is by  Proposition~\ref{prop_ddimnorels}. Similarly,
$$\Ddim(B_3^{3k+1})\le \Ddim((\k E_6)^{\otimes 2k}\otimes B_3)\le 2k\cdot \Ddim(\k E_6)+\Ddim B_3=2k+1.$$
\end{proof}

\begin{lemma}
\label{lemma_d4e6}
Let $D_4$ and $E_6$ denote quivers of the corresponding Dynkin types with some orientation of arrows. Then the algebra $B_2^2$ is mutation-equivalent to the algebra $\k D_4$ and the algebra $B_3\otimes B_2$ is mutation-equivalent to the algebra $\k E_6$. Consequently, the above algebras are derived Morita-equivalent.
\end{lemma}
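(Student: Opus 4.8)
The plan is to produce, in each case, an explicit chain of block mutations and shifts carrying the tautological strong exceptional collection of indecomposable projectives to a strong exceptional collection whose endomorphism algebra is the path algebra of a Dynkin quiver of the required type. Mutation-equivalence is then immediate and derived Morita-equivalence follows from Remark~\ref{remark_mutequiv}(3); and since $D_4$ and $E_6$ are trees, Corollary~\ref{cor_forest} lets us aim for whatever orientation is most convenient.

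For $B_2^2=\k(\Gamma_2\times\Gamma_2)$ --- the commutative square with its single commutativity relation --- the indecomposable projectives form a full strong exceptional collection $(P_{00},P_{10},P_{01},P_{11})$ with $\End(\bigoplus)=B_2^2$, in which $\{P_{10},P_{01}\}$ is a block because $\Hom^\bul(P_{10},P_{01})=\Hom^\bul(P_{01},P_{10})=0$. First I would right-mutate $P_{00}$ through this block: by~\eqref{eq_mutright}, since $\Hom^\bul(P_{00},P_{10})=\Hom^\bul(P_{00},P_{01})=\k$, this gives $R_{P_{10},P_{01}}(P_{00})=Cone(P_{00}\to P_{10}\oplus P_{01})[-1]$, where $P_{00}=S_{00}$ maps in diagonally; up to the shift permitted by Definition~\ref{def_mutequiv} the mutated object is the module $N:=\coker(P_{00}\hookrightarrow P_{10}\oplus P_{01})$, with one-dimensional stalks at three vertices. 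Next I would check that $(P_{10},P_{01},N,P_{11})$ is still a \emph{strong} exceptional collection --- all the relevant $\Hom^\bul$ spaces are $\k$ concentrated in degree $0$, the only one needing a computation being $\Hom^\bul(N,P_{11})$, read off from the resolution $0\to P_{00}\to P_{10}\oplus P_{01}\to N\to 0$ --- and that in its endomorphism algebra the maps $P_{10}\to P_{11}$ and $P_{01}\to P_{11}$ are exactly the composites $P_{10}\to N\to P_{11}$ and $P_{01}\to N\to P_{11}$; this last point is precisely where the commutativity relation of the square gets used up. Consequently the quiver of $\End(P_{10}\oplus P_{01}\oplus N\oplus P_{11})$ has exactly the three arrows $P_{10}\to N$, $P_{01}\to N$, $N\to P_{11}$, i.e.\ it is a $D_4$-quiver with centre $N$; and since $\dim_\k\End(\bigoplus)=9$ is exactly the dimension of the path algebra of that quiver, no relations survive. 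Hence $B_2^2$ is mutation-equivalent to $\k D_4$.

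For $B_3\otimes B_2=\k(\Gamma_3\times\Gamma_2)$ the scheme is the same, but the $3\times 2$ grid of vertices now carries \emph{two} commutativity relations, so a longer string of block mutations and shifts is needed to turn both of them into compositions. I would begin with the collection $(P_{00},P_{10},P_{01},P_{20},P_{11},P_{21})$ of projectives, grouped into the four blocks $\{P_{00}\}$, $\{P_{10},P_{01}\}$, $\{P_{20},P_{11}\}$, $\{P_{21}\}$, and --- possibly after first reorienting $\k\Gamma_3$ via Corollary~\ref{cor_forest}, as in the treatment of $B_3^2$ above --- right-mutate the two ``corner'' projectives that govern the two squares through the blocks adjacent to them, correcting by shifts so that strongness survives each step. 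As in the $D_4$ case every old ``diagonal'' arrow then becomes a composite, so the endomorphism quiver loses all the edges that would otherwise have to carry relations, and a dimension count identifies it as an $E_6$-quiver without relations; therefore $B_3\otimes B_2$ is mutation-equivalent to $\k E_6$.

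The bookkeeping of $\Hom^\bul$-spaces along the way is routine; the genuinely delicate point --- and where I expect the real work to lie, especially for $E_6$ --- is arranging the mutation sequence so that (i) each intermediate collection stays \emph{strong}, the $[-1]$ shifts produced by right mutations being cancelled without reintroducing morphisms in degrees $\pm 1$, and (ii) the resulting endomorphism quiver is \emph{exactly} the Dynkin diagram and not, say, a larger star. The clean way to settle (ii) is the equality $\dim_\k\End(\bigoplus E_i)=\dim_\k\k\Delta$ for the orientation $\Delta$ one reaches, which forces the absence of relations; checking that the underlying graph comes out as $D_4$, respectively $E_6$, then reduces to verifying that precisely the right old arrows have been absorbed into compositions by the commutativity relations.
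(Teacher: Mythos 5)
The $B_2^2 \leftrightarrow \k D_4$ part of your argument is correct and is a minor variant of the paper's: the paper left-mutates the sink projective $P_{11}$ through the block $\{P_{01},P_{10}\}$, while you right-mutate the source projective $P_{00}$ through the same block, producing $N=\coker(P_{00}\hookrightarrow P_{10}\oplus P_{01})$. Both are one-step mutations and equally clean. One small point you glide over: to see that $P_{10}\to P_{11}$ and $P_{01}\to P_{11}$ factor through $N$, one must take the map $N\to P_{11}$ induced by $(f,-g):P_{10}\oplus P_{01}\to P_{11}$ (with a sign), not $(f,g)$ --- the commutativity relation makes $(f,g)$ \emph{nonzero} on the image of $P_{00}$. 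With that fixed, your $\Hom$- and dimension-counting is sound.

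The $B_3\otimes B_2\leftrightarrow \k E_6$ part is not a proof but an outline that you yourself flag as unfinished, and the strategy as stated does not match what is actually required. You propose to ``right-mutate the two corner projectives that govern the two squares,'' but right mutation of the sink corner $P_{21}$ is not available (there is nothing to its right); the paper instead right-mutates $P_{00}$, \emph{left}-mutates $P_{21}$, and crucially then needs a \emph{third} mutation. After the first two steps the endomorphism algebra is still a path algebra with a commutativity relation (a new square with vertices $P_{10},E,F,P_{11}$), so the assertion that ``every old diagonal arrow then becomes a composite'' after two mutations of corner projectives is false; a further right mutation $G=R_{E,F}(P_{10})[1]$ is what finally yields a relation-free $E_6$ quiver. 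Your framework (block mutations, strongness checks, dimension count, and reorienting via Corollary~\ref{cor_forest}) is exactly the right one, but the concrete mutation sequence --- which is the entire content of the lemma --- is missing, and the one you gesture at would not terminate in an $E_6$ quiver without relations.
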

\begin{proof}
First note that by Corollary~\ref{cor_forest} it suffices to prove the statement for one  (convenient) orientation of arrows.
By the definition, $B_2^2= \k \Gamma_2\otimes \k\Gamma_2$ where $\Gamma_2$ is the quiver $0\to 1$. The category $\Perf B_2^2$ has a full strong exceptional collection of projective modules $P_{00},P_{01},P_{10},P_{11}$. Let $E=L_{P_{01},P_{10}}(P_{11})[-1]=Cone(P_{01}\oplus P_{10}\to P_{11})[-1]$, then the collection $P_{00},E,P_{01},P_{10}$ is also full and strong exceptional, its endomorphism algebra is the path algebra of the quiver
$$\xymatrix{P_{00}\ar[r] & E\ar[r] \ar[d] & P_{01}.\\ & P_{10} &}$$
%By Theorem~\ref{theorem_bondalec} and Corollary~\ref{cor_forest}, one has an %equivalence 
%$D^b(\modd B_2^2)\cong D^b(\modd \k D_4)$, where $D_4$ is the above quiver.
Hence, $B_2^2$ is mutation-equivalent to $\k D_4$ by Definition~\ref{def_mutequiv}.
 
Now we prove the second statement. The algebra $B_3\otimes B_2$ is the path algebra of the quiver
$$\xymatrix{01\ar[r] & 11\ar[r] & 21\\
00\ar[r] \ar[u]& 10\ar[r] \ar[u]& \ 20\ar[u]\\}$$
with commutativity relations. Make mutations in the full strong exceptional collection of projective modules $P_{00},P_{01},P_{10},P_{11},P_{20},P_{21}$: let $E=R_{P_{01},P_{10}}(P_{00})[1]=Cone (P_{00}\to P_{01}\oplus P_{10})$ and
$F=L_{P_{11},P_{20}}(P_{21})[-1]=Cone(P_{11}\oplus P_{20}\to P_{21})[-1]$.
Then the exceptional collection
$P_{01},P_{10},E,F,P_{11},P_{20}$ is full and strong, it has the endomorphism algebra as follows:
$$\xymatrix{P_{01} \ar[r] & E\ar[r] & P_{11} &\\
& P_{10}\ar[r]\ar[u]& F\ar[r]\ar[u] & P_{20},}$$
where the square commutes.
Let $G=R_{E,F}(P_{10})[1]$, then 
the exceptional collection
$P_{01},E,F,G,P_{11},P_{20}$ is also full and strong, its endomorphism algebra is the path algebra of the quiver 
$$\xymatrix{P_{01} \ar[r] & E\ar[r] & G\ar[r] & P_{11} \\
& & F\ar[r]\ar[u] & P_{20}}$$
of Dynkin type $E_6$. Hence, $B_3\otimes B_2$ is mutation-equivalent to $\k E_6$ by definition.
\end{proof}

\begin{remark}
Note that in all examples in Proposition~\ref{prop_maintable} one has
$$\Rdim(A)=[\Sdim(A)].$$
It would be interesting to find out whether this is true in general.
\end{remark}

\section{Graded quivers with two vertices}
\label{section_twovertices}

Consider the 	quiver with two vertices $1,2$ and $n\ge 2$ arrows going from $1$ to $2$. Denote by $V$ the vector space spanned by arrows. Let 
$$A_V=\k e_1\oplus \k e_2\oplus V$$ 
be the corresponding path algebra. In this section we consider $A_V$ as a graded algebra by introducing some weights on arrows (and thus some $\Z$-grading on $V$). Moreover, we consider $A_V$ as a dg algebra with zero differential. Such dg algebra is smooth and compact. The category $\Perf A_V$ has full exceptional collection $P_1=e_1A_V,P_2=e_2A_V$
of right graded $A_V$-modules. Therefore  $\Ddim(\Perf A_V)\le  1$ by \cite[Prop. 4.13]{EL}. On the other hand, we have $\Rdim(\Perf A_V)>0$. Indeed, otherwise by Lemma~\ref{lemma_rdimpositive} we get $\LSdim(\Perf A_V)=\USdim(\Perf A_V)$ what contradicts to Proposition~\ref{prop_ww} below.  It follows that 
$$\Rdim(\Perf A_V)=\Ddim(\Perf A_V)=1.$$

In order to find lower and upper Serre dimensions of $\Perf A_V$, we compute explicitly derived tensor powers of the Serre bimodule $A_V^*$. To formulate the answer, we introduce some notation.

\begin{definition}
\label{def_psi}
Let $V$ be a vector space over $\k$. Denote  $AT_0(V):=\k$ and for $n\ge 1$
$$AT_n(V):=\underbrace{V\otimes V^*\otimes V\otimes V^*\otimes\ldots}_{n\,\,\text{factors}}$$
There are $n-1$ trace maps
$$tr_1,\ldots,tr_{n-1}\colon AT_n(V)\to AT_{n-2}(V),$$
where $tr_i=\id^{\otimes(i-1)}\otimes tr \otimes \id^{\otimes(n-i-1)}$
and $tr$ denotes the pairing $V\otimes V^*\to \k$ or $V^*\otimes V\to\k$.
We put $\psi_{-1}(V):=0$, $\psi_0(V):=\k$, $\psi_1(V):=V$ and for $n\ge 2$
$$\psi_n(V):=\cap_{i=1}^{n-1}\ker tr_i\subset AT_n(V).$$
If $V$ is a graded vector space then $V^*,AT_n(V)$ and $\psi_n(V)$  also carry a natural grading.
\end{definition}

We observe that a right graded $A_V$-module $M$ is given by the following data: two graded vector spaces
$M_1,M_2$ and a homogeneous homomorphism of graded vector spaces $M_1\gets M_2\otimes V$. We will denote such graded module by $(M_1\Leftarrow M_2)$. In particular, the indecomposable projective and injective right graded $A_V$-modules are
$$P_1=(\k\Leftarrow 0),\quad P_2=(V\Leftarrow \k),\qquad I_1=(\k\Leftarrow V^*),\quad I_2=(0\Leftarrow \k).$$

\begin{lemma}
\label{lemma_psi}
Suppose $\dim V\ge 2$. Let $A:=A_V$, then

\begin{enumerate}
\item[($A_i$)] There are the following exact sequences of graded vector spaces for any $i\ge 0$:

\begin{align*}
& i=2k-1: & &0\to \psi_{2k}(V)\xra{\alpha^V_{2k-1}} \psi_{2k-1}(V)\otimes V^*\xra{\beta^V_{2k-1}} \psi_{2k-2}(V)\to 0,\\
& i=2k: & 
 &0\to \psi_{2k+1}(V)\xra{\alpha^V_{2k}} \psi_{2k}(V)\otimes V\xra{\beta^V_{2k}} \psi_{2k-1}(V)\to 0. \\
\end{align*}

\item[($B_i$)]
There are the following exact sequences of graded $A$-modules for any $i\ge 0$:
\begin{align*}
& i=2k-1: & &0\to (\psi_{2k}(V^*)\Leftarrow 0) \to (\psi_{2k-1}(V^*)\otimes V\Leftarrow \psi_{2k-1}(V^*))\to (\psi_{2k-2}(V^*)\Leftarrow \psi_{2k-1}(V^*))\to 0,\\
& i=2k: & 
&0\to (\psi_{2k+1}(V)\Leftarrow 0) \to (\psi_{2k}(V)\otimes V\Leftarrow \psi_{2k}(V))\to (\psi_{2k-1}(V)\Leftarrow \psi_{2k}(V))\to 0.
\end{align*}

\item[($C_i$)]
For any $i\ge 0$ there exist the following isomorphisms in $\Perf A$:
\begin{align*}
%&(C_{0}) & 
%P_2\otimes^L_AA^*&\cong (0\Leftarrow \k);\\
& i=2k-1: & 
P_1\otimes^L_A(A^*)^{\otimes^L_Ak}&\cong (\psi_{2k-2}(V^*)\Leftarrow \psi_{2k-1}(V^*))[k-1];\\
& i=2k: &
P_2\otimes^L_A(A^*)^{\otimes^L_A(k+1)}&\cong (\psi_{2k-1}(V)\Leftarrow \psi_{2k}(V))[k],
\end{align*}
where the structure maps in the right-hand sides are $\beta_{2k-1}^{V^*}$ and $\beta_{2k}^V$.
\end{enumerate}
\end{lemma}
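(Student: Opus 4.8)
The plan is to prove the three families of statements $(A_i)$, $(B_i)$, $(C_i)$ by a single simultaneous induction on $i$, since each one feeds into the next. First I would set up the base cases by hand: for $i=-1,0,1$ the exact sequences $(A_i)$ and $(B_i)$ reduce to the definitions $\psi_{-1}=0$, $\psi_0=\k$, $\psi_1=V$ (for instance, the sequence $(A_0)$ is just $0\to V\to V\to 0$, and $(A_1)$ reads $0\to\psi_2(V)\to V\otimes V^*\xrightarrow{tr}\k\to 0$, which is the definition of $\psi_2(V)$ together with the surjectivity of $tr$, valid because $\dim V\ge 1$). For $(C_i)$ the base cases are the explicit computations $P_1\otimes^L_A A^*\cong I_1$ and $P_2\otimes^L_A(A^*)^{\otimes^L_A 1}\cong I_2$, obtained from $(\ref{eq_SPI})$: indeed $S(P_1)=I_1=(\k\Leftarrow V^*)$ and $S(P_2)=I_2=(0\Leftarrow\k)$, which match the right-hand sides of $(C_1)$ and $(C_0)$ after unwinding $\psi_0,\psi_1$.

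Next I would establish $(A_i)$ on its own. The claim is a purely linear-algebra fact about the alternating tensor powers $AT_n(V)$: the composite map $\psi_n(V)\hookrightarrow AT_n(V)=\psi_{n-1}(V)\otimes(\text{one more factor})\twoheadrightarrow$?? needs to be made precise. The natural map $\alpha_i^V$ is the inclusion $\psi_{i+1}(V)\subset\psi_i(V)\otimes(V \text{ or } V^*)$ coming from the observation that $\psi_{i+1}(V)$ is cut out inside $\psi_i(V)\otimes(\text{last factor})$ by the single remaining trace condition $tr_i$, and $\beta_i^V$ is that last trace $tr_i$ (composed with the identification of $AT_{i-1}(V)$ with $\psi_{i-1}(V)$ via the inductively known inclusion). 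Exactness at the left and middle is then the definition of $\psi_{i+1}$; the only real content is surjectivity of $\beta_i^V$ onto $\psi_{i-1}(V)$, which I would prove by exhibiting, for the identity element of $\End(V)$ (or a splitting idempotent), a lift — this is where the hypothesis $\dim V\ge 2$ is used in a mild way to keep traces surjective. These sequences are non-canonically split, which will be convenient later.

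Then $(B_i)$ follows from $(A_i)$ essentially formally: a short exact sequence of graded vector spaces $0\to\psi_{i+1}\to\psi_i\otimes W\to\psi_{i-1}\to 0$ (with $W=V$ or $V^*$) yields the asserted short exact sequence of $A_V$-modules by taking the module $(\psi_i\otimes V\Leftarrow\psi_i)$ with structure map $\beta_i$, noting that its kernel is $(\psi_{i+1}\Leftarrow 0)$ (a sum of copies of $P_1=(\k\Leftarrow 0)$) and its cokernel is $(\psi_{i-1}\Leftarrow\psi_i)$; here one just has to track which variable ($V$ or $V^*$) appears in each position, which alternates with parity exactly as in the statement. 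Finally, for $(C_i)$ I would argue inductively: from $(C_i)$ for a given $i$ I deduce $(C_{i+1})$ by applying $-\otimes^L_A A^*$, i.e. applying the Serre functor, to the isomorphism of $(C_i)$, and then resolving the module $(\psi_{i-1}\Leftarrow\psi_i)$ (or its counterpart) using the short exact sequence $(B_{i})$ or $(B_{i-1})$: that sequence expresses it as the cone of a map between a sum of shifted $P_1$'s and an explicit module, so that $S$ of it is computed from $S(P_1)=I_1$, $S(P_2)=I_2$ and the explicit structure maps, producing the next $(\psi\Leftarrow\psi)[\ast]$ with the shift incremented by $1$ every two steps, as claimed.

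The main obstacle will be the surjectivity statements inside $(A_i)$ — precisely, identifying the cokernel of the iterated-trace map with $\psi_{i-1}(V)$ rather than something larger, and making sure the maps $\alpha_i^V,\beta_i^V$ are the correct ones so that the parity bookkeeping ($V$ versus $V^*$, and the shift $[k-1]$ versus $[k]$) in $(C_i)$ comes out consistently across the induction step. I would handle the trace surjectivity by the explicit lift mentioned above and then double-check the parity/shift bookkeeping on the cases $i=2,3$ before trusting the general pattern.
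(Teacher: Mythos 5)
Your overall induction architecture — base cases for $(A_0),(A_1),(C_0),(C_1)$, formal passage $(A_i)\Rightarrow(B_i)$, and then applying $-\otimes^L_A A^*$ to $(C_i)$ and resolving via $(B)$ to increment the index by two — matches the paper's structure. But the proposal has a real gap precisely at the step you flag as the "main obstacle": proving surjectivity of $\beta_i^V$.

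You propose to prove surjectivity of $\beta^V_{2k}\colon\psi_{2k}(V)\otimes V\to\psi_{2k-1}(V)$ by "exhibiting, for the identity element of $\End(V)$, a lift." That naive lift does not land in $\psi_{2k}(V)\otimes V$. If $x\in\psi_{2k-1}(V)$ and you try $\sum_i x\otimes e^i\otimes e_i$, you would need each $x\otimes e^a$ to lie in $\psi_{2k}(V)$, which requires $tr_{2k-1}(x\otimes e^a)=0$; but $tr_{2k-1}$ pairs the last factor of $x$ (which lies in $V$) against $e^a$, and this is generically nonzero. Fixing this by subtracting correction terms recreates the same problem one step earlier, so there is no short elementary lift, and the hypothesis $\dim V\ge 2$ is not used merely "mildly" here.

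The paper sidesteps the direct linear-algebra problem entirely, and this is the key idea of the proof. It takes $(B_i)+(C_i)$ as input, tensors the two-term projective presentation from $(B_i)$ by $A^*$, and obtains a complex of injectives $f\colon\psi_{2k+1}(V)\otimes I_1\to\psi_{2k}(V)\otimes I_2$ whose cone computes $S^{k+2}(P_2)$ (up to shift). Since $\gldim A_V=1$, $\mathrm{Cone}(f)\cong(\ker f)[1]\oplus\coker f$, with $\coker f=(0\Leftarrow\coker\beta^V_{2k+1})$. Because $S$ is an equivalence it preserves indecomposability, so $\mathrm{Cone}(f)$ must be indecomposable; $\dim V\ge 2$ forces $\ker f\ne 0$, hence $\coker f=0$, which simultaneously yields $(A_{2k+1})$ (surjectivity of $\beta^V_{2k+1}$) and $(C_{2k+2})$. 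In other words, $(A_{i+1})$ is an output of the induction on $(C_i)$, not an independent input as in your plan; without the Serre-functor argument the surjectivity has no proof in your proposal.
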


\begin{proof}
The maps $\alpha^V_i$ and $\beta^V_i$  are defined as follows. For $i=2k$, consider the composite map
$$\psi_{2k}(V)\otimes V\to AT_{2k+1}(V)\xra{tr_{2k}} AT_{2k-1}(V),$$
its image lies in $\psi_{2k-1}(V)$. By the definition, the kernel of the above composite map is $\psi_{2k+1}(V)$. Hence sequences in $(A_i)$ are defined and are left exact by the definition, similarly for  odd $i$. The statement is that $\beta^V_i$ is surjective.

Homomorphisms of modules in $(B_{2k})$ are given by the commutative diagram
$$\xymatrix{
\psi_{2k+1}(V)\ar[d]^{\alpha_{2k}^{V}} && 0\otimes V\ar[ll]\ar[d] \\
\psi_{2k}(V)\otimes V\ar[d]^{\beta_{2k}^{V}} && \psi_{2k}(V)\otimes V\ar@{=}[d] \ar@{=}[ll] \\
\psi_{2k-1}(V) && \psi_{2k}(V)\otimes V, \ar[ll]_{\beta_{2k}^{V}}
}$$
and similarly for $(B_{2k-1})$. Therefore $(A_i)$ is equivalent to $(B_i)$ for any $i$.

We will prove $A_i$, $B_i$ and $C_i$ simultaneously by induction in $i$. First, we note that $(A_0)$ and $(A_1)$ are clearly true. 
As $P_i$ is a projective $A$-module, 
$P_i\otimes^L_AA^*=P_i\otimes_AA^*=e_iA^*\cong I_i$ by \eqref{eq_SPI}. Therefore
$P_1\otimes^L_AA^*\cong (\k\Leftarrow V^*)$ and 
$P_2\otimes^L_AA^*\cong (0\Leftarrow \k)$, that is, $(C_0)$ and $(C_1)$ hold.  

%For $(A_2)$, we need to check that  the map
%$$\beta_2^V\colon \psi_2(V)\otimes V\to V$$
%is surjective. Let $v_1,\ldots,v_n$ be a basis in $V$ and $v^1,\ldots,v^n\in V^*$ be %the dual basis. By assumptions, $n\ge 2$, hence for any $v_i$ one can choose $v^j$ such %that $i\ne j$. Then $v_i\otimes v^j\in\psi_2(V)$ and $\beta_2^V((v_i\otimes v^j)\otimes %v_j)=v_i$. Hence, $v_i\in\im\beta_2^V$ and $\beta_2^V$ is surjective.

Now, for any $i\ge 1$, we prove that $(B_i)+(C_i)\Rightarrow (A_{i+1})+(C_{i+2})$. It will  follow that all $(A_i)$, $(B_i)$ and $(C_i)$ are true. 

Assume $i$ is even, $i=2k$. The case of odd $i$ can be done similarly.
By $(C_{2k})$, we have an isomorphism
$$P_2\otimes^L_A(A^*)^{\otimes_A^L(k+1)}\cong (\psi_{2k-1}(V)\Leftarrow \psi_{2k}(V))[k].$$
By $(B_{2k})$, the module $(\psi_{2k-1}(V)\Leftarrow \psi_{2k}(V))$ is quasi-isomorphic to the cone of the morphism
$$(\psi_{2k+1}(V)\Leftarrow 0) \to (\psi_{2k}(V)\otimes V\Leftarrow \psi_{2k}(V)).$$
This morphism is nothing but
$$\psi_{2k+1}(V)\otimes P_1\to \psi_{2k}(V)\otimes P_2,$$
both terms are projective $A$-modules.
Tensoring by $A^*$, we get 
\begin{multline*}
(\psi_{2k-1}(V)\Leftarrow \psi_{2k}(V))\otimes^L_AA^*\cong
Cone(\psi_{2k+1}(V)\otimes P_1\to \psi_{2k}(V)\otimes P_2)\otimes_AA^*\cong \\
\cong Cone(\psi_{2k+1}(V)\otimes I_1\to \psi_{2k}(V)\otimes I_2)\cong
Cone(\psi_{2k+1}(V)\otimes (\k\Leftarrow V^*)\xra{f} \psi_{2k}(V)\otimes (0\Leftarrow \k)).
\end{multline*}
Since $A$ has global dimension one, one has a quasi-isomorphism
$$Cone (f)\cong (\ker f)[1]\oplus \coker f.$$ 
The map $f$ is given by $(0\Leftarrow \beta_{2k+1}^V)$, hence
$$\ker f\cong (\psi_{2k+1}(V)\Leftarrow \psi_{2k+2}(V))$$
and 
$$ \coker f=(0\Leftarrow \coker\beta_{2k+1}^V).$$
Recall that the functor $-\otimes^L_AA^*$ on $\Perf A$ is a Serre functor, in particular, it is an equivalence. It follows that $Cone (f)$ is indecomposable in $\Perf A$, and since $\ker f\ne 0$ (here we use that $\dim (V)\ge 2$), we get $\coker f=0$. It follows now that $\beta_{2k+1}^V$ is surjective, $(A_{2k+1})$ is proven. Finally, 
\begin{multline*}
P_2\otimes^L_A(A^*)^{\otimes_A^L(k+2)} \cong (\psi_{2k-1}(V)\Leftarrow \psi_{2k}(V))\otimes^L_AA^*[k]\cong\\
\cong Cone(f)[k]\cong (\ker f)[k+1]\cong (\psi_{2k+1}(V)\Leftarrow \psi_{2k+2}(V))[k+1],
\end{multline*}
and $(C_{2k+2})$ is proved.
\end{proof}

\begin{lemma}
\label{lemma_kw}
Suppose $V$ is a graded vector space and $\dim V\ge 2$. Then for any $k\ge 1$ one has (see Definition~\ref{def_w})
$$\sup \psi_{2k-1}(V)=kw(V)+\inf V,\quad \inf \psi_{2k-1}(V)=-kw(V)+\sup V$$
and 
$$\sup \psi_{2k}(V)=kw(V),\quad \inf \psi_{2k}(V)=-kw(V).$$
\end{lemma}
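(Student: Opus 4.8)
The plan is to run a strong induction on $n$, fed by the short exact sequences $(A_i)$ of Lemma~\ref{lemma_psi} together with the elementary behaviour of $\sup$ and $\inf$ under tensor products and short exact sequences of graded vector spaces. Write $a:=\inf V$ and $b:=\sup V$, so $w:=w(V)=b-a$; with the natural grading on the dual one has $(V^*)^j=(V^{-j})^*$, hence $\sup V^*=-a$, $\inf V^*=-b$ and $w(V^*)=w$. All maps occurring in $(A_i)$ are homogeneous of degree $0$ (inclusions of graded subspaces of the towers $AT_n(V)$ and partial traces $V\otimes V^*\to\k$), so each $(A_i)$ is a short exact sequence of graded vector spaces and therefore splits in each degree; consequently, for $0\to X\to Y\to Z\to 0$ one has $\sup Y=\max(\sup X,\sup Z)$ and $\inf Y=\min(\inf X,\inf Z)$, while for nonzero graded spaces $\sup(X\otimes Y)=\sup X+\sup Y$ and $\inf(X\otimes Y)=\inf X+\inf Y$. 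I would also record at the outset that every $\psi_n(V)$ is nonzero: the sequences $(A_i)$ give the recursion $\dim\psi_{n+1}(V)=(\dim V)\dim\psi_n(V)-\dim\psi_{n-1}(V)$ with $\psi_{-1}(V)=0$, $\psi_0(V)=\k$, and for $\dim V\ge 2$ an easy induction shows this sequence is strictly positive.

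First I would dispose of the degenerate case $w=0$: then $V$ is concentrated in the single degree $a=b$, so $AT_{2k-1}(V)$ is concentrated in degree $a$ and $AT_{2k}(V)$ in degree $0$; since $\psi_n(V)\subset AT_n(V)$ is nonzero, all four formulas hold trivially (with $w=0$, $a=b$).

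For $w>0$ I would argue by strong induction on $n$, with base cases $\psi_0(V)=\k$ (degree $0$) and $\psi_1(V)=V$ (degrees from $a$ to $b$), matching the stated formulas for $k=0,1$. For the even step, apply $(A_{2k-1})$, i.e. $0\to\psi_{2k}(V)\to\psi_{2k-1}(V)\otimes V^*\to\psi_{2k-2}(V)\to 0$: by the induction hypothesis $\sup\psi_{2k-1}(V)+\sup V^*=(kw+a)+(-a)=kw$, whereas $\sup\psi_{2k-2}(V)=(k-1)w<kw$, so $\max(\sup\psi_{2k}(V),(k-1)w)=kw$ forces $\sup\psi_{2k}(V)=kw$, and symmetrically $\inf\psi_{2k}(V)=-kw$. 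For the odd step, apply $(A_{2k})$, i.e. $0\to\psi_{2k+1}(V)\to\psi_{2k}(V)\otimes V\to\psi_{2k-1}(V)\to 0$: by the induction hypothesis $\sup\psi_{2k}(V)+\sup V=kw+b$, whereas $\sup\psi_{2k-1}(V)=kw+a<kw+b$, so $\sup\psi_{2k+1}(V)=kw+b=(k+1)w+\inf V$, and symmetrically $\inf\psi_{2k+1}(V)=-kw+a=-(k+1)w+\sup V$. This closes the induction.

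The only point needing care — hence the ``main obstacle'' — is the vanishing bookkeeping: one must know the relevant $\psi_n(V)$ are nonzero, both so that $\sup/\inf$ are additive over $\otimes$ and so that the strict inequalities $(k-1)w<kw$ and $kw+a<kw+b$ (which use $w>0$, i.e. $\dim V\ge 2$) genuinely single out the right term of each $\max$. This is exactly where the hypothesis $\dim V\ge 2$ enters, and it is dealt with once and for all by the dimension recursion above.
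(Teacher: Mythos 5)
Your proof is correct, but it takes a genuinely different route from the paper. The paper's argument is constructive: it computes $\sup$ and $\inf$ of the ambient spaces $AT_n(V)$ by additivity of degrees under $\otimes$, and then exhibits explicit elements of $\psi_n(V)$ achieving the extremal degrees, namely alternating tensors such as $e_1\otimes e^n\otimes e_1\otimes e^n\otimes\cdots$ built from a top-degree basis vector $e_1$ and a bottom-degree basis vector $e_n$; these lie in every $\ker tr_j$ because $e^n(e_1)=0$, which is where $\dim V\ge 2$ is used. You instead run an induction directly on the short exact sequences $(A_i)$ of Lemma~\ref{lemma_psi}, reading off $\sup$ and $\inf$ of each $\psi_n(V)$ from those of $\psi_{n-1}(V)\otimes V^{(*)}$ and $\psi_{n-2}(V)$ via the degreewise splitting; here $\dim V\ge 2$ enters through the strict inequalities that pin down which summand of the $\max/\min$ dominates, and through the dimension recursion $d_{n+1}=(\dim V)d_n-d_{n-1}$ that guarantees $\psi_n\ne 0$ (which you also need to handle the degenerate case $w(V)=0$). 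Both arguments are sound. The paper's has the advantage of being self-contained — it does not rely on the exactness of $(A_i)$, which is itself a nontrivial part of Lemma~\ref{lemma_psi} — and it makes the extremal elements visible; yours is more mechanical, avoids any guesswork, and gives the nonvanishing of $\psi_n(V)$ and its dimension as free byproducts. One small looseness: you write that $w>0$ ``i.e.\ $\dim V\ge 2$'', but this is only a one-way implication ($w>0$ forces $\dim V\ge 2$, not conversely); it does no harm since you have already dispatched $w=0$ separately.
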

\begin{proof}
Denote $s:=\sup V, i:=\inf V$. Then 
\begin{align*}
\sup AT_{2k-1}(V)&=k\sup V+(k-1)\sup V^*=ks-(k-1)i=kw(V)+i,\\
\inf AT_{2k-1}(V)&=k\inf V+(k-1)\inf V^*=ki-(k-1)s=-kw(V)+s,\\
\sup AT_{2k}(V)&=k\sup V+k\sup V^*=ks-ki=kw(V),\\
\inf AT_{2k}(V)&=k\inf V+k\inf V^*=ki-ks=-kw(V).
\end{align*}
To prove that we have the same bounds for $\psi_i(V)\subset AT_i(V)$, we demonstrate that $\psi_i(V)$ contains elements of the maximal  and the minimal possible degree in $AT_i(V)$. Choose a basis $e_1,\ldots,e_n$ in $V$ compatible with the grading such that 
$\deg e_1=s, \deg e_n=i$. Let $e^1,\ldots,e^n\in V^*$ denote the dual basis, we have $\deg e^1=-s, \deg e^n=-i$. Let 
$$x=e_1\otimes e^n \otimes e_1\otimes e^n\otimes\ldots\otimes e^n, 
y=e_n\otimes e^1 \otimes e_n\otimes e^1\otimes\ldots\otimes e^1,$$ 
then $x,y\in\psi_{2k}(V)$ and
$\deg x=ks-ki=kw(V)$, $\deg y=ki-ks=-kw(V)$.
Similarly for $\psi_{2k-1}$.
\end{proof}

\begin{prop}
\label{prop_ww}
Let $V$ be a graded vector space with $\dim V\ge 2$ and $w(V)=w$ (see Definition~\ref{def_w}). Then for the corresponding graded algebra $A_V$ (with zero differential) one has
$$\LSdim (\Perf A_V)=1-w,\qquad \USdim(\Perf A_V)=1+w.$$
\end{prop}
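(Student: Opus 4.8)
The plan is to compute $\sup$ and $\inf$ of the complexes $(A_V^*)^{\otimes^L_A m}$ and then feed these into the formula \eqref{eq_infsup}. Since $A := A_V$ has global dimension one and $P_1, P_2$ form a full exceptional collection, it suffices to understand $P_1 \otimes^L_A (A^*)^{\otimes^L_A m}$ and $P_2 \otimes^L_A (A^*)^{\otimes^L_A m}$, because $A = P_1 \oplus P_2$ and hence $(A^*)^{\otimes^L_A m} = (P_1 \oplus P_2) \otimes^L_A (A^*)^{\otimes^L_A m}$. The descriptions of these are exactly the content of part $(C_i)$ of Lemma~\ref{lemma_psi}: for $i = 2k$ we have $P_2 \otimes^L_A (A^*)^{\otimes^L_A (k+1)} \cong (\psi_{2k-1}(V) \Leftarrow \psi_{2k}(V))[k]$, and for $i = 2k-1$ we have $P_1 \otimes^L_A (A^*)^{\otimes^L_A k} \cong (\psi_{2k-2}(V^*) \Leftarrow \psi_{2k-1}(V^*))[k-1]$. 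These are honest modules (concentrated in one cohomological degree after the shift), so their $\sup$ and $\inf$ as complexes equal the cohomological shift, while their \emph{internal} grading is controlled by Lemma~\ref{lemma_kw}.

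First I would carefully match up exponents. Writing $m$ for the tensor power of $A^*$, we want $\sup$ and $\inf$ of $(A^*)^{\otimes^L_A m}$, which is the max/min over the two summands $P_1 \otimes^L_A (A^*)^{\otimes^L_A m}$ and $P_2 \otimes^L_A (A^*)^{\otimes^L_A m}$. From $(C_i)$ with the substitution $k = m$ (for the $P_1$ piece, $i = 2m-1$) and $k = m-1$ (for the $P_2$ piece, $i = 2m-2$), we get
$$P_1 \otimes^L_A (A^*)^{\otimes^L_A m} \cong (\psi_{2m-2}(V^*) \Leftarrow \psi_{2m-1}(V^*))[m-1], \qquad P_2 \otimes^L_A (A^*)^{\otimes^L_A m} \cong (\psi_{2m-3}(V) \Leftarrow \psi_{2m-2}(V))[m-1].$$
In both cases the cohomological degree is $-(m-1)$, but the module itself has a nontrivial internal grading; note that for $\Perf A_V$ as a graded/dg category, the relevant $\Hom^{\bul}$ combines cohomological degree and internal degree, so $\sup$ and $\inf$ of $(A^*)^{\otimes^L_A m}$ pick up contributions from both. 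Concretely $\sup$ of $P_1 \otimes^L_A (A^*)^{\otimes^L_A m}$ equals $-(m-1) + (\text{max internal degree of } \psi_{2m-2}(V^*) \oplus \psi_{2m-1}(V^*))$ and similarly $\inf$. Then I would apply Lemma~\ref{lemma_kw} to $\psi_{2m-2}(V^*), \psi_{2m-1}(V^*)$ and $\psi_{2m-3}(V), \psi_{2m-2}(V)$. Since $w(V^*) = w(V) = w$ and $\inf V^* = -\sup V$, $\sup V^* = -\inf V$, the lemma gives, e.g., $\sup \psi_{2m-2}(V^*) = (m-1)w$, $\sup \psi_{2m-1}(V^*) = mw + \inf V^* = mw - \sup V$, and so on; one collects the largest of these across both summands to get $\sup (A^*)^{\otimes^L_A m}$, obtaining something of the form $-(m-1) + mw + (\text{bounded})$, hence $\sup (A^*)^{\otimes^L_A m} = (w-1)m + O(1)$; dividing by $-m$ and taking the limit via \eqref{eq_infsup} gives $\LSdim(\Perf A_V) = 1 - w$. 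The symmetric bookkeeping on $\inf$ gives $\inf (A^*)^{\otimes^L_A m} = -(w+1)m + O(1)$ and hence $\USdim(\Perf A_V) = 1 + w$.

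The main obstacle — really the only nonroutine point — is getting the parity cases and the index shifts in $(C_i)$ exactly right, and being careful that the grading convention on $\Perf A_V$ treats the internal degree on the same footing as the cohomological degree, so that $\sup$ and $\inf$ are genuinely the sums of the two. I would handle the odd/even $m$ cases uniformly by noting that both $P_1$- and $P_2$-summands of $(A^*)^{\otimes^L_A m}$ always appear, so no parity distinction survives in the final asymptotics; the $O(1)$ error terms (coming from $\inf V, \sup V$ and the constant-degree pieces $\psi_0, \psi_1$) wash out in the limit. A small sanity check I would record: when $w = 0$ (all arrows in the same degree, i.e., the ordinary path algebra of the Kronecker-type quiver), the formula gives $\LSdim = \USdim = 1$, consistent with Proposition~\ref{prop_nonDynkinSdim} since that quiver is non-Dynkin.
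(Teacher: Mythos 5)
Your proposal follows the same route as the paper: you use formula \eqref{eq_infsup}, decompose $(A_V^*)^{\otimes^L_{A_V} m}$ as a right module via $A_V = P_1 \oplus P_2$, apply part $(C_i)$ of Lemma~\ref{lemma_psi} with exactly the index substitutions the paper uses ($k=m$ for the $P_1$ summand, $k=m-1$ for the $P_2$ summand), feed the resulting $\psi$'s into Lemma~\ref{lemma_kw}, and pass to the limit. Your bookkeeping on the shift $[m-1]$ and on the $O(1)$ error terms is correct, so this is essentially the paper's proof.
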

\begin{proof}
By  \cite[Prop. 5.5]{EL}, we have 
\begin{equation}
\label{eq_limdim}
\LSdim \Perf A_V=\lim_m \frac{-\sup (A_V^*)^{\otimes^L_{A_V}m}}m, \quad
\USdim \Perf A_V=\lim_m \frac{-\inf (A_V^*)^{\otimes^L_{A_V}m}}m.
\end{equation}
By Lemma~\ref{lemma_psi}, we have isomorphisms in $\Perf A_V$: 
\begin{multline*}
(A_V^*)^{\otimes^L_{A_V}m}\cong \left(P_1\otimes^L_{A_V}(A_V^*)^{\otimes^L_{A_V}m}\right)\oplus \left(P_2\otimes^L_{A_V}(A_V^*)^{\otimes^L_{A_V}m}\right)\cong \\
\cong ((\psi_{2m-2}(V^*)\Leftarrow \psi_{2m-1}(V^*))\oplus (\psi_{2m-3}(V)\Leftarrow \psi_{2m-2}(V)))[m-1].
\end{multline*}
It follows from Lemma~\ref{lemma_kw} that 
\begin{multline*}
\sup ((A_V^*)^{\otimes^L_{A_V}m})= \max((m-1)w,(m-1)w-\inf V,(m-1)w+\inf V)+1-m=\\
=(m-1)w+|\inf V|+1-m
\end{multline*}
and
\begin{multline*}
\inf ((A_V^*)^{\otimes^L_{A_V}m})=\min(-(m-1)w,-(m-1)w-\sup V,-(m-1)w+\sup V)+1-m=\\
=-(m-1)w-|\sup V|+1-m.
\end{multline*}
Now using \eqref{eq_limdim} we get the statement.
\end{proof}

\begin{remark}
Let $\TT=\langle E_1,E_2\rangle$ be a dg enhanced triangulated category, generated by an exceptional pair. Then $\TT\cong \Perf A_V$ where $V=\Hom^\bul(E_1,E_2)$. Indeed, by general theory, $\TT$ is equivalent to $\Perf \AA$ where $\AA=R\End(E_1\oplus E_2)$ is the dg endomorphism algebra. Clearly, $\AA$ is formal: there exists a quasi-isomorphism
$A_V\cong H^{\bul}(\AA)\to \AA$. Hence $\Perf \AA\cong\Perf A_V$.
\end{remark}

\section{Some other examples}
\label{section_other}

In this last section we consider some (to be precise, three) examples of path algebras with relations and  compute their Rouquier, diagonal and Serre dimension. 

We denote by $[\ldots P\to \underline{Q}\to R\to S\ldots ]$ the corresponding complex with $Q$ placed in degree $0$.

\begin{example}
\label{example_1}
Consider the quiver 
$$\xymatrix{0 \bul  \ar[rr]^x \ar@/_1pc/[rrrr]_z && \bul 1 \ar[rr]^y && \bul 2
}.$$
Denote the path algebra with relation $yx=0$ by $A$.

Clearly $\gldim(A)=2$.

Projective and injective modules over $A$ are
$$\xymatrix{
P_0\colon & (\k && 0 && 0)\\
P_1\colon & (\k  && \k \ar[ll] && 0)\\
P_2\colon & (\k && \k && \k \ar[ll] \ar@/^1pc/[llll])
}$$
and
$$\xymatrix{
I_0\colon & (\k && \k \ar[ll]&& \k  \ar@/^1pc/[llll])\\
I_1\colon & (0 && \k && \k  \ar[ll])\\
I_2\colon & (0 && 0 && \k)
}$$
(where arrows denote identity maps and missing arrows act by zero).

We have $\Ddim (A)\le 1$ by Proposition~\ref{prop_ddimradical} since 
$R(A)^2=0$. On the other hand, assume $\Rdim(A)=0$. Then  $\LSdim(A)=\USdim(A)$ by Lemma~\ref{lemma_rdimpositive} what is false, see below. 
Therefore
$$\Rdim (A)=\Ddim(A)=1.$$
%a semi-orthogonal decomposition $D^b(A)=(\langle P_0\rangle,\langle P_1,P_2\rangle)$, %where both components have $\Rdim=0$ since $\langle P_1,P_2\rangle\cong D^b(\k A_1)$.

Let $M$ be the module 
$$\xymatrix{(\k && 0 && \k  \ar@/^1pc/[llll])}.$$
Its projective resolution and injective resolutions are  
$$[P_0\to P_1\to \underline{P_2}] \quad\text{and}\quad [\underline {I_0}\to I_1\to I_2].$$
Hence by \eqref{eq_SPI} 
\begin{equation}
\label{eq_MM}
S(M)\cong [I_0\to I_1\to \underline{I_2}]\cong M[2].
\end{equation}
It follows that $\USdim(A)\ge 2$. Since $\USdim (A)\le \gldim (A)=2$ (Proposition~\ref{prop_serregldim}), we get
$$\USdim (A)=2.$$

Now we prove that 
$$\LSdim(A)=1/2.$$
For this we perform some direct calculation which comprise two following lemmas.
\begin{lemma}
\label{lemma_xyz1} 
We have for any $k\ge 1$ and $i\le 2k-1$
\begin{enumerate}
\item $H_i(S^{k+1}(P_0))\cong H_i(S^k(S_1))$; 
\item $H_i(S^{k+1}(S_1))\cong H_{i-1}(S^k(P_0))$;
\item $H_i(S^{k+1}(P_2))\cong H_{i-1}(S^k(P_0))$.
\end{enumerate}
\end{lemma}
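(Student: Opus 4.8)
The plan is to reduce all three isomorphisms to the single computation $S(M)\cong M[2]$ already made in \eqref{eq_MM}, where $M=(\k\;\;0\;\;\k)$ is the indecomposable module supported at the vertices $0$ and $2$ with $z$ acting as an isomorphism. Iterating gives $S^{k}(M)\cong M[2k]$, so $H_\bullet(S^kM)$ is a single module placed in homological degree $2k$; this is the only fact about $M$ that I will use, and it is precisely what forces the range $i\le 2k-1$.

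First I would write down the two short exact sequences of $A$-modules $0\to M\to I_0\to S_1\to 0$ and $0\to P_0\to M\to S_2\to 0$, both read off directly from the explicit module pictures ($M$ is the submodule of $I_0$ living at vertices $0,2$, and $S_0=P_0$ is the socle of $M$). Applying the triangulated autoequivalence $S^k$ to the triangle $M\to I_0\to S_1\to M[1]$ and using $S(P_j)\cong I_j$ (so $S^{k}(I_0)\cong S^{k+1}(P_0)$), the long exact cohomology sequence reads $\cdots\to H_i(S^kM)\to H_i(S^{k+1}P_0)\to H_i(S^kS_1)\to H_{i-1}(S^kM)\to\cdots$; since $H_\bullet(S^kM)$ vanishes outside degree $2k$, both outer terms die for $i\le 2k-1$ and we obtain (1). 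The same manoeuvre applied to $P_0\to M\to S_2\to P_0[1]$, combined with $S^{k+1}(P_2)\cong S^k(I_2)\cong S^k(S_2)$ (as $I_2=S_2$), gives $H_i(S^{k+1}P_2)\cong H_{i-1}(S^kP_0)$ for $i\le 2k-1$, which is (3).

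For (2) I would first realise $S(S_1)$ as $\mathrm{Cone}(\psi\colon I_0\to I_1)$ by applying $S$ to the two-term projective resolution $P_0\xrightarrow{\,\cdot x\,}P_1$ of $S_1$, then check vertex by vertex that the (essentially unique) nonzero map $\psi$ has $\ker\psi\cong M$ and $\coker\psi\cong S_2$, so that $\psi$ factors as $I_0\twoheadrightarrow S_1\hookrightarrow I_1$. The octahedral axiom applied to $I_0\to S_1\to I_1$ then yields a triangle $M[1]\to S(S_1)\to S_2\to M[2]$; applying $S^k$ and again using that $H_\bullet(S^kM)$ sits in a single degree gives $H_i(S^{k+1}S_1)\cong H_i(S^kS_2)$ for $i\le 2k$, and composing with (3) gives $H_i(S^{k+1}S_1)\cong H_{i-1}(S^kP_0)$ for $i\le 2k-1$, which is (2).

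The work here is bookkeeping rather than conceptual: one must get the three short exact sequences and the factorisation of $\psi$ right, and one must keep track of the cohomological amplitude of $S^k$ of a module (concentrated in homological degrees $[0,2k]$, because each application of $S$ spreads cohomology downward by at most $\gldim A=2$). The only delicate point — and the reason the statements are restricted to $i\le 2k-1$ — is the single bad degree $2k$ occupied by $H_\bullet(S^kM)$, where the connecting maps in the long exact sequences need not vanish; I would double-check that boundary degree carefully, since that is exactly where the argument is tight.
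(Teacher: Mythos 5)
Your proofs of parts (1) and (3) are exactly the paper's: you use the short exact sequences $0\to M\to I_0\to S_1\to 0$ and $0\to P_0\to M\to I_2\to 0$ (with $I_2=S_2$), apply $S^k$, invoke $S^k(M)\cong M[2k]$, and read off the long exact cohomology sequence in the range $i\le 2k-1$ where both relevant terms $H_\bullet(M[2k])$ vanish.

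For part (2) you take a genuinely different and, I think, cleaner route. The paper replaces $S(S_1)\cong[I_0\to\underline{I_1}]$ by the total complex of a bicomplex $B$ of projectives, isolates an auxiliary subobject $\widetilde M$, proves $S(\widetilde M)\cong\widetilde M[2]$ by observing that $\widetilde M$ is the extension of $M$ by $M[1]$ governed by the one-dimensional $\Ext^2(M,M)$, and then works with the triangle $\widetilde M\to\Tot B\to P_0[1]$. You instead observe that the differential $\psi\colon I_0\to I_1$ (the unique nonzero map, as $\Hom(I_0,I_1)\cong\Hom(P_0,P_1)=\k$) has image $S_1$, kernel $M$ and cokernel $S_2$, so it factors as $I_0\twoheadrightarrow S_1\hookrightarrow I_1$; the octahedral axiom applied to this factorisation gives the triangle $M[1]\to S(S_1)\to S_2\to M[2]$ directly, and then you apply $S^k$ and compose with the already-proved (3). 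This avoids the bicomplex and the auxiliary object $\widetilde M$ (and its separate Calabi--Yau verification) entirely, and it is not circular since your proof of (3) does not use (2). Both arguments are valid; yours uses only $S^k(M)\cong M[2k]$ plus the structure of the single map $\psi$, whereas the paper additionally verifies a fractional-CY property for the two-term extension $\widetilde M$.

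One small imprecision in your commentary: you speak of ``the single bad degree $2k$'', but in the long exact sequences two adjacent terms $H_i(M[2k])$ and $H_{i-1}(M[2k])$ appear, so the argument actually degenerates at both $i=2k$ and $i=2k+1$; the stated range $i\le 2k-1$ of course excludes both. This does not affect the correctness of the proof.
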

\begin{proof}
1) One has an exact sequence of $A$-modules
$$0\to M\to I_0\to S_1\to 0$$
and thus a distinguished triangle $S^k(M)\to S^k(I_0)\to S^k(S_1)\to S^k(M)[1]$ for any $k\ge 1$. Since $S(P_0)\cong I_0$ and \eqref{eq_MM} this triangle is isomorphic to  
$$M[2k]\to S^{k+1}(P_0)\to S^k(S_1)\to M[2k+1].$$
The associated exact sequence in homology yields the statement.

2) The module $S_1$ has projective resolution $[P_0\to \underline{P_1}]$, hence
$S(S_1)\cong [I_0\to \underline{I_1}]$. The latter complex is quasi-isomorphic to the total complex $\Tot B$ of the bicomplex
$$B={\xymatrix{P_0\ar[r] & P_1\ar[r] & P_2 &&\\
&& P_0\ar[u] \ar[r]& P_1\ar[r] & \underline{P_2} \\ &&&& P_0,\ar[u]}}
$$
where the underlined term has degree $(0,0)$. Let $\widetilde B\subset B$ be the subbicomplex of terms $B^{ij}$ with $j\ge 0$. Let $\widetilde M:=\Tot \widetilde B$. Recall that $M\cong [P_0\to P_1\to \underline{P_2}]$. One has a distinguished triangle  
$$M[1]\to \widetilde M\to M\to M[2].$$ 
Since $\Ext^2(M,M)=\k$, it follows that $S(\widetilde M)\cong \widetilde M[2]$.
Also note that one has distinguished triangles
$\widetilde M\to \Tot B\to P_0[1]\to \widetilde M[1]$ and thus
$S^k(\widetilde M)\to S^k(\Tot B)\to S^k(P_0)[1]\to S^k(\widetilde M)[1]$. By the above, the latter triangle is isomorphic to
$$\widetilde M[2k]\to S^{k+1}(S_1)\to S^k(P_0)[1]\to \widetilde M[2k+1].$$
The statement now follows from looking at the associated exact sequence in homology (note that $H_i(\widetilde M)=0$ for $i<0$).

3) The proof is similar to 1). We use the exact sequence of $A$-modules
$$0\to P_0\to M\to I_2\to 0$$
and the isomorphism $S(P_2)\cong I_2$.
\end{proof}

\begin{lemma}
\label{lemma_xyz2} 
We have 
\begin{align}
\label{ppp1}
&H_{k-1}(S^{2k}(P_0))=I_2, & \quad   & H_{i}(S^{2k}(P_0))=0 \quad && \text{for any}\quad k\ge 1, i<k-1;\\
\label{ppp2}
&H_{k}(S^{2k+1}(P_0))=I_0, & \quad   & H_{i}(S^{2k+1}(P_0))=0\quad && \text{for any} \quad k\ge 0, i<k;\\
\label{ppp3}
&S^{2k}(P_1)=P_1[k], &\quad & S^{2k+1}(P_1)=I_1[k] \quad && \text{for any}\quad k\ge 0; \\
\label{ppp4}
&H_{k}(S^{2k}(P_2))=I_0, & \quad  & H_{i}(S^{2k}(P_2))=0\quad && \text{for any}\quad k\ge 1, i<k;\\
\label{ppp5}
&H_{k}(S^{2k+1}(P_2))=I_2, & \quad  & H_{i}(S^{2k+1}(P_2))=0\quad && \text{for any}\quad k\ge 0, i<k.
\end{align}
\end{lemma}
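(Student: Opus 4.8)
The plan is to prove the five displays \eqref{ppp1}--\eqref{ppp5} by induction on $k$, with Lemma~\ref{lemma_xyz1} as essentially the only nontrivial input. I would dispose of \eqref{ppp3} first, since it is independent of the rest. From \eqref{eq_SPI} one has $S(P_1)\cong I_1$. The module $I_1$ has the length-one projective resolution $0\to P_0\to P_2\to I_1\to 0$ (the map $P_2\to I_1$ sending the generator to a generator is onto, with kernel $\cong P_0$), so $S(I_1)\cong Cone(S(P_0)\to S(P_2))=Cone(I_0\to I_2)$. The induced morphism $I_0\to I_2$ is the nonzero element of $\Hom_A(I_0,I_2)\cong A_{20}\cong\k$; it is an epimorphism of modules with kernel $\cong P_1$, so $S^2(P_1)\cong P_1[1]$, and iterating gives $S^{2k}(P_1)\cong P_1[k]$, $S^{2k+1}(P_1)\cong I_1[k]$.

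For the four displays about $P_0$ and $P_2$, the key is to combine parts (1) and (2) of Lemma~\ref{lemma_xyz1}: substituting part (2) (with $k$ replaced by $k-1$) into part (1) yields the two-step recursion
$$H_i(S^{n}(P_0))\cong H_{i-1}(S^{n-2}(P_0))\qquad\text{for all }\ i\le 2n-5,\ n\ge3.$$
Each application raises the homological degree by one and keeps the parity of the exponent, hence the ``type'' of the bottom nonzero homology. Iterating down to the base cases $S^1(P_0)\cong I_0$ (from \eqref{eq_SPI}) and $H_0(S^2(P_0))\cong I_2$ then produces \eqref{ppp1} and \eqref{ppp2}; the bound $i\le 2n-5$ is harmless because the homological degrees occurring in \eqref{ppp1}, \eqref{ppp2} are at most $\lfloor(n-1)/2\rfloor$. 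The second base case is obtained from part (1) of Lemma~\ref{lemma_xyz1} with $k=1$, which identifies $H_0(S^2(P_0))$ with $H_0(S(S_1))=\coker(I_0\to I_1)$, and a direct computation with the structure maps of the three-dimensional modules $I_0$ and $I_1$ shows this cokernel is $S_2=I_2$. Finally \eqref{ppp4} and \eqref{ppp5} follow immediately by feeding \eqref{ppp1} and \eqref{ppp2} into part (3) of Lemma~\ref{lemma_xyz1}, using also the trivial cases $S^0(P_2)=P_2$ and $S^1(P_2)=I_2$.

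I expect the main difficulty to be organizational, not conceptual: one must track carefully the shifts and the validity ranges $i\le 2k-1$ in Lemma~\ref{lemma_xyz1} so that the chained recursion stays legitimate all the way down to the base cases, and keep all isomorphisms on the level of $A$-modules so that one may genuinely assert ``$\cong I_0$'' and ``$\cong I_2$'' rather than merely a coincidence of dimensions. The only computational inputs are the two small module identifications $\coker(I_0\to I_1)\cong I_2$ and $\ker(I_0\to I_2)\cong P_1$, both immediate once the maps $I_0\to I_1$ and $I_0\to I_2$ are written out on the three vertices.
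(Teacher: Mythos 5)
Your proof is correct and follows essentially the same route as the paper: the two-step recursion obtained by composing Lemma~\ref{lemma_xyz1}(1) and (2), the base-case identification $H_0(S(S_1))\cong\coker(I_0\to I_1)\cong I_2$, the direct projective-resolution computation of $S^2(P_1)$ via $I_1\cong[P_0\to\underline{P_2}]$, and Lemma~\ref{lemma_xyz1}(3) for the $P_2$ cases. As a small bonus, you correctly identify $\ker(I_0\to I_2)\cong P_1$; the paper's sketch of \eqref{ppp3} appears to misprint ``$P_0\cong[\underline{I_0}\to I_2]$'' where ``$P_1\cong[\underline{I_0}\to I_2]$'' is what actually closes the loop $S^2(P_1)\cong P_1[1]$.
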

\begin{proof}
We prove \eqref{ppp1} by induction in $k$. Let $k=1$ then $H_i(S^2(P_0))=H_i(S(S_1))$ for any $i\le 1$ by Lemma~\ref{lemma_xyz1}. We have $S(S_1)=S([P_0\to \underline {P_1}])=[I_0\to \underline {I_1}]$. In particular, $H_i(S(S_1))=0$ for $i<0$ and $H_0(S(S_1))=\coker (I_0\to I_1)\cong I_2$. Now suppose $k\ge 2$ and $i\le k-1$. We have
\begin{align*}
H_i(S^{2k}(P_0)) & = H_i(S^{(2k-1)+1}(P_0))  && \\
&= H_i(S^{2k-1}(S_1)) &&\text{by Lemma~\ref{lemma_xyz1}(1) since} \quad i\le 2(2k-1)-1\\
&= H_i(S^{(2k-2)+1}(S_1)) && \\
&= H_{i-1}(S^{2(k-1)}(P_0)) && \text{by Lemma~\ref{lemma_xyz1}(2) since}\quad i\le 2(2k-2)-1\\
&=\begin{cases} I_2& \text{for}\quad i=k-1,\\ 0& \text{for} \quad i<k-1.\end{cases} &&
\end{align*}

The proof of \eqref{ppp2} is similar.

For the proof of \eqref{ppp3} it suffices to note that $I_1\cong [P_0\to\underline{P_2}]$, $S(I_1)\cong [I_0\to\underline{I_2}]$ and $P_0\cong [\underline {I_0}\to I_2]$.

Statements \eqref{ppp4} and \eqref{ppp5} follow from \eqref{ppp2}, \eqref{ppp1} and Lemma~\ref{lemma_xyz1}(3).
\end{proof}

Lemma~\ref{lemma_xyz2} implies that for any $k\ge 0$ one has isomorphisms of right $A$-modules
$$H^{-k}((A^*)^{\otimes^L_A(2k+1)})\cong A^*,\quad H^{i}((A^*)^{\otimes^L_A(2k+1)})=0\quad\text{for}\quad i>-k.$$
It follows that $\sup ((A^*)^{\otimes^L_A(2k+1)})=-k$ and thus $\LSdim(A)=1/2$ by \eqref{eq_infsup}.
\end{example}

\begin{example}
\label{example_2}

Now consider the non-ordered quiver 
$$
\xymatrix{0 \bul  \ar[rr]^x  && \bul 1 \ar[rr]^y && \bul 2 \ar@/^1pc/[llll]^z}
$$
with relations $zy=xz=0$. Denote the quotient algebra by $A$.
One has $\gldim(A)=3$.

Projective and injective modules over $A$ are
$$\xymatrix{
P_0\colon & (\k \ar@/_1pc/[rrrr]&& 0 && \k)\\
P_1\colon & (\k  && \k \ar[ll] && 0)\\
P_2\colon & (\k && \k \ar[ll]&& \k \ar[ll])
}$$
and
$$\xymatrix{
I_0\colon & (\k && \k \ar[ll]&& \k  \ar[ll])\\
I_1\colon & (0 && \k && \k  \ar[ll])\\
I_2\colon & (\k \ar@/_1pc/[rrrr]&& 0 && \k).
}$$

We have  
$$S(P_0)\cong I_0\cong P_2,\quad S(P_2)\cong I_2\cong P_0.$$

Projective resolution of $S(P_1)=I_1$ is 
$$[P_1\to P_2\to P_0\to \underline{P_2}].$$
Hence, 
$$S^2(P_1)\cong S(I_1)\cong [I_1\to P_0\to P_2\to \underline{P_0}].$$ 
Further, 
$$S^3(P_1)\cong [I_1\to P_0\to P_2\to P_0\to P_2\to P_0\to \underline{P_2}].$$
Iterating, we see that for any $k\ge 1$
$$S^{2k+1}(P_1)\cong 
[I_1\to \underbrace{P_0\to P_2\to P_0\to P_2\to \ldots \to P_0\to \underline{P_2}}_{\text{$P_0$ occurs $3k$ times}}].$$
In particular, $H_{6k}(S^{2k+1}(P_1)), H_0(S^{2k+1}(P_1))\ne 0$. 
It follows that 
$$H_{6k}((A^*)^{\otimes^L_A 2k+1}), H_0((A^*)^{\otimes^L_A 2k+1})\ne 0$$ 
and thus 
$$\inf ((A^*)^{\otimes^L_A 2k+1})\le -6k, \sup ((A^*)^{\otimes^L_A 2k+1})=0.$$
It follows from \eqref{eq_infsup} that $\USdim(A)\ge 3$ and $\LSdim(A)=0$. By Proposition~\ref{prop_serregldim} we have $\USdim(A)\le \gldim A= 3$, and we get 
$$\LSdim(A)=0, \USdim(A)=3.$$

Note that we have a full exceptional collection in $D^b(\modd A)$, which is not strong:
$$D^b(\modd A)=(S_2,P_0,P_1).$$
The category $\langle P_0,P_1\rangle$ is equivalent to the category $D^b(\modd \k A_2)$ because $\Hom^{\bul}(P_0,P_1)=\k[0]$, where $A_2$ denotes a certain Dynkin quiver. The latter category has Rouquier dimension $0$ by Proposition~\ref{prop_dynkin}, as well as the category $\langle S_2\rangle$. 
It follows that $\Rdim(A)\le 1$. Since $\LSdim A\ne \USdim A$ we deduce from Lemma~\ref{lemma_rdimpositive} that
$$\Rdim (A)=1.$$ 
By Proposition~\ref{prop_ddimexcoll} we have $\Ddim(A)\le 2$. We do not know whether 
$\Ddim(A)=1$ or~$2$.

%As we have seen, the admissible subcategory $\langle P_0,P_1\rangle\subset D^b(\modd %A)$ is equivalent to $D^b(\modd \k A_2)$ and $\USdim(\k A_2)=\LSdim(\k A_2)=1/3$. %Therefore 
%$$\LSdim(\k A_2)>\LSdim(A)$$
%and lower Serre dimension is not monotonous in semi-orthogonal decompositions.
\end{example}

\begin{example}
\label{example_3}
Let $A$ be the Auslander algebra of the algebra $\k\langle t\rangle$ of dual numbers. Algebra $A$ is isomorphic to the path algebra of the quiver
$$
\xymatrix{0 \bul  \ar@/^1pc/[rr]^x  && \bul 1 \ar@/^1pc/[ll]^y }
$$
with relation $xy=0$. We have $\gldim(A)=2$.
Calculations show that $S(P_0)\cong I_0\cong P_0$ and $S(P_1)\cong I_1\cong [P_1\to P_0\to \underline{P_0}]$.
Consequently, 
$$S^n(P_1)\cong [P_1\to\underbrace{P_0\to P_0\to\ldots\to P_0\to \underline{P_0}}_{2n}]$$
and $H_i(S^n(P_1))\ne 0$ exactly for  $0\le i\le 2n$.
Hence 
$$\inf ((A^*)^{\otimes^L_A n})=-2n, \sup ((A^*)^{\otimes^L_A n})=0.$$
It follows (see \eqref{eq_infsup}) that 
$$\LSdim(A)=0,\quad \USdim(A)=2.$$

Modules $S_0, P_1$ form a full exceptional collection in $\Perf A$ (which is not strong). It follows from Proposition~\ref{prop_ddimexcoll} that $\Rdim(A)\le \Ddim(A)\le 1$.
Since $\LSdim(A)\ne \USdim (A)$ we have $\Rdim(A)\ne 0$ by Lemma~\ref{lemma_rdimpositive}.
Consequently, 
$$\Rdim(A)=\Ddim(A)=1.$$
\end{example}


\begin{thebibliography}{999999}


\bibitem[Au55]{Au} Maurice Auslander,  ``On the dimension of modules and algebras. III. Global dimension'', \textit{Nagoya Math. J.} 9, 67--77 (1955).


\bibitem[ARS97]{ARS} Maurice Auslander, Idun Reiten and Sverre O.\,Smalo, \textit{Representation
theory of Artin algebras}. Cambridge Studies in Advanced Mathematics,
36. Cambridge UP, Cambridge, 1997.

\bibitem[BGP73]{BGP} 
I.\,N.\,Bernstein, I.\,M.\,Gelfand and V.\,A.\,Ponomarev, ``Coxeter functors and Gabriel's theorem'', \textit{Uspehi Mat. Nauk}, 28:2(170) (1973), 19-–33 (Russian); English transl: \textit{Russian Math.
Surveys} 28:2 (1973), 17-–32.




\bibitem[Bo90]{Bo} A.\,I.\,Bondal, ``Representations of associative algebras and coherent sheaves'', \textit{ Math. USSR Izvestiya}, 34:1  (1990), 23--42.


\bibitem[BK89]{BK1} A.\,Bondal, M.\,Kapranov, ``Representable functors, Serre functors, and reconstructions'' \textit{ Izv. Akad. Nauk SSSR Ser. Mat.} 53:6 (1989),  1183--1205, 1337; translation in \textit{ Math. USSR-Izv.} 35:3  (1990), 519--541.


\bibitem[DHKK13]{DHKK} G.\,Dimitrov, F.\,Haiden, L.\,Katzarkov, M.\,
Kontsevich, ``Dynamical systems and categories'', arXiv.math: 1307.8418.


\bibitem[EL19]{EL} Alexey Elagin, Valery A.\,Lunts, ``Three notions of dimension for triangulated categories'', 	arXiv.math:1901.09461.

\bibitem[FD93]{FD} Benson Farb,  R.\,Keith Dennis, \textit{Noncommutative algebra}, volume 144 of Graduate Texts in Mathematics. Springer-Verlag, New York, 1993.

\bibitem[GL87]{GL} Werner Geigle, Helmut Lenzing, ``A class of weighted projective curves arising in representation theory of finite
dimensional algebras'', \textit{ Singularities, representations of algebras, and vector bundles, Lect. Notes Math. 1273} (1987), 265-–297.


\bibitem[HI11]{HI} Martin Herschend and Osamu Iyama, ``n-representation-finite algebras and twisted fractionally
Calabi–Yau algebras'',
\textit{ Bull. London Math. Soc.} 43 (2011), 449–-466.

\bibitem[Ke98]{Ke2} Bernhard Keller, ``On the construction of triangle equivalences'',
In: Derived equivalences for group rings, Lecture Notes in Math., vol. 1685, Springer, Berlin, Heidelberg, 1998,  155--176.


\bibitem[Ke06]{Ke} Bernhard Keller, ``Derived categories and tilting'', lecture notes at \textsf{https://webusers.imj-prg.fr/~bernhard.keller/ictp2006/lecturenotes/keller.pdf}, 2006


\bibitem[MY01]{MY} Jun-Ichi Miyachi and Amnon Yekutieli,
``Derived Picard groups of finite dimensional hereditary algebras''
Compositio Mathematica, 129(3) (2001), 341--368.

\bibitem[Ne02]{Ne} Amnon Neeman, \textit{Triangulated categories},
Ann. Math. Studies, 148,  Princeton UP, 2002.


\bibitem[Or16]{Or} Dmitri Orlov, ``Smooth and proper noncommutative schemes and gluing of DG categories'', Adv. Math., 302 (2016), 59–-105.

\bibitem[Ri89]{Rick} Jeremy Rickard, \textit{Morita theory for derived categories}, J. London Math. Soc. 39 (1989), 436–-456.


\bibitem[Ri84]{Ri} Claus Michael Ringel, \textit{Tame algebras and integral quadratic forms}, Lecture Notes in Mathematics 1099. Springer, Berlin - Heidelberg
- New York (1984). 





\bibitem[Ro08]{Rou}
Rafael Rouquier, ``Dimensions of triangulated categories'', \textit{Journal of K-theory}, 1:2(2008), 193--256.


\end{thebibliography}
\end{document}